\documentclass[]{amsart}

\usepackage{array,epsfig}
\usepackage{amsmath}
\usepackage{amsfonts}
\usepackage{amssymb}
\usepackage{amsxtra}
\usepackage{amsthm}
\usepackage{mathrsfs}
\usepackage{color}
\usepackage[dvipsnames]{xcolor}
\usepackage{tikz-cd}
\usepackage{mathabx,epsfig}
\usepackage{comment}
\usepackage{enumerate}
\usepackage{mathtools}
\usepackage{ytableau}
\usepackage{hyperref}

\hypersetup{
    colorlinks=true,
    linkcolor=blue,
    filecolor=magenta,      
    urlcolor=cyan,
    citecolor=teal
}
\urlstyle{same}

\theoremstyle{definition}

\newtheorem{theorem}{Theorem}[section]
\newtheorem{corollary}[theorem]{Corollary}
\newtheorem{lemma}[theorem]{Lemma}
\newtheorem{prop}[theorem]{Proposition}

\theoremstyle{definition}
\newtheorem{definition}[theorem]{Definition}

\newtheorem{remark}[theorem]{Remark}
\newtheorem{notation}[theorem]{Notation}

\newcommand{\bb}[1]{\mathbb{#1}}
\newcommand{\Z}{\bb{Z}}
\newcommand{\Q}{\bb{Q}}
\newcommand{\R}{\bb{R}}
\newcommand{\C}{\bb{C}}
\newcommand{\N}{\bb{N}}

\newcommand{\cV}{\mathcal{V}}
\newcommand{\cG}{\mathcal{G}}

\newcommand{\comp}{\circ}
\newcommand{\FI}{\mathrm{FI}}
\newcommand{\Hom}{\mathrm{Hom}}
\newcommand{\End}{\mathrm{End}}
\newcommand{\Irr}{\mathrm{Irr}}

\newcommand{\fig}{\mathrm{FI}_G}

\newcommand{\Conf}{\text{Conf}}
\newcommand{\UConf}{\text{UConf}}
\newcommand{\Ind}{\text{Ind}}

\newcommand{\sh}{\sharp}
\newcommand{\bp}{\clubsuit}

\newcommand{\nX}{\mathbf{n}_X}
\newcommand{\dX}{\mathbf{d}_X}
\newcommand{\kX}{\mathbf{k}_X}
\newcommand{\mX}{\mathbf{m}_X}

\newcommand{\nXbp}{\mathbf{n}_{X\bp}}
\newcommand{\dXbp}{\mathbf{d}_{X\bp}}

\newcommand{\mXbp}{\mathbf{m}_{X\bp}}

%----------Young Diagrams----------

\usepackage[vcentermath]{youngtab}
\newcommand{\Y}[1]{{\tiny\yng(#1)}}

%----------------------------------

%Pagination stuff.
\setlength{\topmargin}{-.3 in}
\setlength{\oddsidemargin}{0in}
\setlength{\evensidemargin}{0in}
\setlength{\textheight}{9.in}
\setlength{\textwidth}{6.5in}
\pagenumbering{arabic}

\title{Representation stability in the (co)homology of vertical configuration spaces}

\date{\today}

\author[Baron]{David Baron}
\email[David Baron]{jdf5@williams.edu}
%\address[David Baron]{Williams College \\ 
\address{Williams College \\ 
Mathematics \& Statistics\\
Wachenheim Science Center \\ 
18 Hoxsey Street \\ 
Williamstown, MA 01267 USA}

\author[Pal]{Urshita Pal}
\email[Urshita Pal]{urshita@umich.edu}
%\address[Urshita Pal]{University of Michigan \\ 
\address{University of Michigan \\ 
Department of Mathematics \\
 	 530 Church St\\
 	 Ann Arbor MI, 48109 \\USA}

\author[Wang]{Chenglu Wang}
\email[Chenglu Wang]{chengluw@sas.upenn.edu}
%\address[Chenglu Wang]{University of Pennsylvania \\ 
\address{University of Pennsylvania \\ 
Department of Mathematics \\
 	 David Rittenhouse Lab. \\ 
209 South 33rd Street \\ 
Philadelphia, PA 19104-6395}

\author[Wilson]{Jennifer C. H. Wilson}
\email[Jennifer Wilson]{jchw@umich.edu}
%\address[Jennifer Wilson]{University of Michigan \\ 
\address{University of Michigan \\ 
Department of Mathematics \\
 	 530 Church St\\
 	 Ann Arbor MI, 48109 \\USA}

\author[Yang]{Chunye Yang}
\email[Chunye Yang]{chunye@umich.edu}
%\address[Chunye Yang]{University of Michigan \\ 
\address{University of Michigan \\ 
Department of Mathematics \\
 	 530 Church St\\
 	 Ann Arbor MI, 48109 \\USA}

\begin{document}
\maketitle

\begin{abstract}
   \noindent
   In this paper, we study sequences of topological spaces called ``vertical configuration spaces" of points in Euclidean space. We apply the theory of $\fig$-modules, and results of Bianchi--Kranhold, to show that their (co)homology groups are \emph{representation stable} with respect to natural actions of wreath products $S_k \wr S_n$. In particular, we show that in each (co)homological degree, the (co)homology groups (viewed as $S_k \wr S_n$-representations) can be expressed as induced representations of a specific form. Consequently, the characters of their rational (co)homology groups, and the patterns of irreducible $S_k \wr S_n$-representation constituents of these groups, stabilize in a strong sense. In addition, we give a new proof of rational (co)homological stability for unordered vertical configuration spaces, with an improved stable range. 
\end{abstract}
 \setcounter{tocdepth}{2}

\tableofcontents

\begin{section}{Introduction} \label{IntroSection}

\subsection{Classical (co)homological stability} 

A classical theme in algebraic topology is the study of (co)homological stability:  a sequence of spaces with maps $(X_n \xrightarrow{\varphi_n} X_{n+1})_{n \geq 0}$ satisfies \emph{homological stability} with \emph{stable range} $n \geq N_d$  if for all $n \geq N_d$, the map $$H_d(X_n) \xrightarrow{(\varphi_n)_*} H_d(X_{n+1})$$ induced on degree-$d$ homology is an isomorphism. 

One well-studied example of this phenomenon is the families of unordered configuration spaces of points in open manifolds.  Given a manifold $M$, we define the \emph{ordered configuration space} of $M$ on $n$ points to be  $$\Conf_n(M) := \{ (x_1, ..., x_n) \in M^n \; | \; \ x_i \neq x_j \ \text{if $i\neq j$} \}$$
topologized as a subspace of $M^n$. The manifold $M$ is called the \emph{background manifold}. We may alternatively view $\Conf_n(M)$ as the space of embeddings of the discrete set $\{1,\dots,n\}$ into $M$, and interpret an element of the space as a collection of $n$ distinct points in $M$, ``labelled''  by elements of the set $\{1, \dots, n\}$. The symmetric group $S_n$ acts on $\Conf_n(M)$ by permuting the labels.  The \emph{unordered configuration space} of $M$ on $n$ points is the orbit space  
$$\UConf_n(M) := \Conf_n(M) / S_n,$$ endowed with the quotient topology. An element of $\UConf_n(M)$ may be interpreted as a set of $n$ distinct unlabelled points in $M$. 

Arnol'd \cite{Arnold} proved homological stability of the unordered configuration spaces $\UConf_n(\R^2)$ with stable range $n \geq 2d$. McDuff \cite[Theorem 1.2]{McDuff} generalized this theorem to any connected open background manifold $M$ of dimension at least 2.  Segal \cite[Appendix to \S 5]{Segal} established a stable range for these families of $n \geq 2d$.

The ordered configuration spaces of these manifolds fail to exhibit homological stability, but Church--Farb \cite{ChurchFarb} formalized a sense in which a sequence of $S_n$-representations
may stabilize ``up to the $S_n$ action." They named this condition \emph{representation stability}. Church \cite{Church} and later Church--Ellenberg--Farb \cite{CEF} proved representation stability for the (co)homology of the families $(\Conf_n(M))_n$. 

\subsection{Representation stability} 

Let $(V_n)_n$ be a sequence of rational $S_n$--representations, and let $\phi_n: V_n \to V_{n+1}$ be maps that are compatible with the $S_n$-actions in an appropriate sense (made precise in Definition \ref{DefnConsistentSequence}).  Church--Farb proposed the following notion of stability for the sequence $(\phi_n: V_n \to V_{n+1})_n$. 

\begin{definition}[Church--Farb {\cite[Definition 1.1]{ChurchFarb}}]  \label{DefnRepStableSn} The sequence $(\phi_n: V_n \to V_{n+1})_n$ is \emph{representation stable} if, for sufficiently large $n$, the following conditions hold: 
\begin{enumerate}[(i)]
    \item \textbf{Injectivity}: The maps $\phi_n: V_n \rightarrow V_{n+1}$ are injective.
    \item \textbf{Surjectivity}: The span of the $S_{n+1}$-orbit of $\phi_n(V_n)$ equals all of $V_{n+1}$.
    \item \textbf{Multiplicity Stability}: Decompose $V_n$ into irreducible $S_n$-representations, $$V_n = \bigoplus_{\lambda} c_{\lambda,n} V(\lambda)_n$$ with multiplicities $0 \leq c_{\lambda,n} \leq \infty$. For each $\lambda$, the multiplicities $c_{\lambda,n}$ are independent of $n$.
\end{enumerate}
\end{definition}

Rational representations of $S_n$ are semisimple, and the irreducible representations are in canonical bijection with partitions of $n$. Definition \ref{DefnRepStableSn} depends on a certain (combinatorially natural) naming scheme for these irreducible representations that is uniform in $n$: for any partition $\lambda$ of an integer, there is an associated sequence of irreducible $S_n$-representations $V(\lambda)_n$ that is defined for all $n$ sufficiently large. This naming scheme is described in Section \ref{SectionRepStability} as a special case of Definition \ref{new partition}. With this notation, significantly,  Definition \ref{DefnRepStableSn} states that for a representation stable sequence $V_n$ of $S_n$-representations, the decomposition of each module $V_n$ into irreducible subrepresentations is (for $n$ sufficiently large) independent of $n$.

To illustrate these patterns, consider the degree-1 cohomology of $\Conf_n(\R^2)$. As $S_n$-representations, the cohomology groups decompose in the following ways, using the standard indexing of irreducible $S_n$-representations by Young diagrams of size $n$.  

\begin{alignat*}{4}
    H^1(\Conf_1(\R^2); \Q) &   \cong 0 \\ 
        H^1(\Conf_2(\R^2); \Q) &   \cong V_{\Y{2}}  \\ 
H^1(\Conf_3(\R^2); \Q) &   \cong V_{\Y{3}}  &&\oplus V_{\Y{2,1}} \\ 
H^1(\Conf_4(\R^2); \Q) &   \cong V_{\Y{4}} &&\oplus V_{\Y{3,1}} &&\oplus V_{\Y{2,2}} \\ 
H^1(\Conf_5(\R^2); \Q) &   \cong V_{\Y{5}} &&\oplus V_{\Y{4,1}} &&\oplus V_{\Y{3,2}} \\ 
H^1(\Conf_6(\R^2); \Q) &   \cong V_{\Y{6}} &&\oplus V_{\Y{5,1}} &&\oplus V_{\Y{4,2}} \\ 
\vdots & 
\end{alignat*}

\begin{comment}
In general, for $n \geq 4$,

\[    H^1(\Conf_n(\R^2); \Q)   \cong {V}_{\tiny \begin{ytableau}
\;   &  \none[\dots] &  \none[\dots]   & \; \\ 
\end{ytableau}} \; \; \oplus \; \; V_{\tiny \begin{ytableau}
\; & \;  &  \none[\dots] &  \none[\dots]  & \; \\
\;  
\end{ytableau}} \; \; \oplus \; \; V_{\tiny \begin{ytableau}
\; & \; & \; &  \none[\dots] &  \none[\dots]  & \; \\
\; & \; 
\end{ytableau}}
\] 
\end{comment}

Once $n \geq 4$, we obtain the decomposition at level $(n+1)$ from that at level $n$ by adding a single box to the top row of each Young diagram. Church--Farb express the stable decomposition as follows, by recording the portion of each Young diagram below its top row. This description holds for all $n\geq 4$. 
$$H^1(\Conf_n(\R^2); \Q)    \cong V(\varnothing)_n \oplus V(\Y{1})_n \oplus V(\Y{2})_n.$$ 

Church \cite{Church} proves that the same pattern holds in every (co)homological degree for every family of spaces $\Conf_n(M)$, with connected open orientable background manifold $M$ of dimension at least 2. 

Church--Ellenberg--Farb further formalized the theory of representation stability using an algebraic construct they called an \emph{$\FI$-module} \cite[Definition 1.1]{CEF}. They proved that if a sequence of $S_n$-representations has the structure of a ``finitely generated $\FI$-module''---a combinatorial condition that is easy to verify in many cases---then representation stability in the sense of Definition \ref{DefnRepStableSn} follows, in addition to other structural consequences. %The original theory of $\FI$-modules is developed in \cite{CEF, CEFN} and described in expository articles such as \cite{Farb-Survey, JennyNotes, JRW-Survey}. 

\subsection{Vertical configuration spaces} 

Our paper focuses on a variant of configuration space of $\R^s$, called \emph{vertical configuration space}, that arises from the theory of certain coloured operads (see Kranhold \cite{KranholdThesis}). These spaces were  introduced by Herberz \cite{Herberz} and Rosner \cite{Rosner} in their respective Bachelor's theses, and further studied by Latifi \cite{Latifi}, Kranhold \cite{KranholdThesis}, and Bianchi--Kranhold \cite{Vconf}. The following definitions are stated as in Bianchi--Kranhold \cite{Vconf}.

\begin{definition}[{Bianchi--Kranhold \cite[Section 1]{Vconf}}] \label{DefnVertical} For $k \geq 1$, we say a $\emph{cluster}$ of size $k$ in $\R^s$ is a $k$-tuple of $k$ distinct points of $\R^s$.
    Let $\R^s = \R^p \times \R^q$, and consider the projection map of the first $p$ coordinates, $pr : \R^s \longrightarrow \R^p$. We say a cluster $z = (z^1, \dots , z^k)$ of $k$ points in $\R^s$ is $\emph{vertical}$ if $pr(z^1) = \dots= pr(z^k)$, i.e. the $k$ points in the cluster share their first $p$ coordinates.  
\end{definition}

\begin{notation}
    For an $r$-tuple $K=(k_1,\dots,k_r)$ of positive integers,  write $|K|= k_1 + \dots + k_r$. Let $\Conf_K(\R^s)$ denote the space (canonically isomorphic to $\Conf_{|K|}(\R^s)$) of $r$-tuples $(z_1,\dots,z_r) \in (\R^s)^{k_1} \times \dots \times (\R^s)^{k_r}$ of $r$ ordered, pairwise disjoint clusters $z_i =(z_i^1,\dots,z_i^{k_i})$. 
\end{notation}

In other words, $\Conf_K(\R^s)$ is the space $\Conf_{|K|}(\R^s)$ with the additional data of a partition of the set of labels into $r$ parts of sizes $k_1,\dots,k_r$. An element in $\Conf_K(\R^s)$ is a collection of $|K|$ distinct labelled points in $\R^s$, correspondingly partitioned into $r$ clusters. 

\begin{definition}[{Bianchi--Kranhold \cite[Definition 1.1]{Vconf}}] \label{DefnVConf} Fix $s=p+q$. 
Let $r\geq 0$ and $K=(k_1,\dots,k_r)$ be an ordered tuple of positive integers.  Define the \emph{vertical configuration space}, denoted $\widetilde{\cV}_K(\R^{p,q})$, to be the following subspace of $\Conf_{K}(\R^s)$. An $r$-tuple of clusters $(z_1,\dots,z_r)$  belongs to $\widetilde{\cV}_K(\R^{p,q})$ if and only if each cluster $z_i =(z_i^1,\dots,z_i^{k_i})$ is vertical in the sense of Definition \ref{DefnVertical}. 
\end{definition}
We illustrate this definition in Figures \ref{OrderedClustersLine} and \ref{OrderedClusters}.  

\begin{figure}[h!]  
\begin{center}
    \includegraphics[scale=1.2]{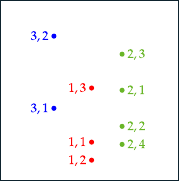}
    \caption{An element of $\tilde\cV_{({\color{red} 3}, {\color{Green} 4}, {\color{blue} 2})}(\R^{1,1})$. The 9 points are partitioned into three clusters of $3$, $4$, and $2$ points, respectively.  Points in each cluster are subject to a colinearity condition. The case $q=1$ is the origin of the name ``vertical" configuration space.  Figure adapted from Bianchi--Kranhold \cite[Figure 1]{Vconf}.} \label{OrderedClustersLine}       
        \end{center}
        \end{figure} 

\begin{figure}[h!]  
\begin{center}
    \includegraphics[scale=1.2]{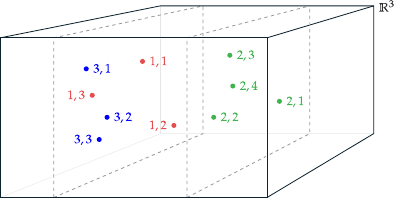}
    \caption{An element of $\tilde\cV_{({\color{red} 3}, {\color{Green} 4}, {\color{blue} 3})}(\R^{1,2})$. The 10 points are partitioned into three clusters of $3$, $4$, and $3$ points, respectively.  Points in each cluster are subject to a coplanarity condition.   Figure adapted from Bianchi--Kranhold \cite[Figure 3]{Vconf}.} \label{OrderedClusters}       
        \end{center}
        \end{figure} 

In general the full symmetric group does not act on the vertical configuration space by permuting the labels, since arbitrary permutations may not respect the verticality conditions imposed on each cluster.  Instead, if we fix $K=(k_1,\dots,k_r)$ and let $r(k)$ denote the number of occurrences of $k$ in $K$, then the product of wreath products $\prod_{k \geq 1} S_k \wr S_{r(k)}$ acts freely on $\widetilde{\cV}_K(\R^{p,q})$ by permuting clusters of the same size and labels on points within each cluster. Analogous to $\UConf$, we define the \emph{unordered} vertical configuration space as the quotient space under this action. 
\begin{definition} Fix $p,q,r\geq 0$, and let $K=(k_1,\dots,k_r)$ be an ordered tuple of positive integers.  The \emph{unordered vertical configuration space}  $\cV_K(\R^{p,q})$ is defined to be the orbit space 
 $$\cV_K(\R^{p,q}): = \widetilde{\cV}_K(\R^{p,q}) \left/ \prod_{k \geq 1} S_k \wr S_{r(k)}\right. .$$
    
\end{definition}

We may view an element of $\cV_K(\R^{p,q})$ as $|K|$ distinct unlabelled points in $\R^{p+q}$, partitioned into unordered, unlabelled clusters of sizes $k_1, k_2, \dots, k_r$, as shown in Figure \ref{UnorderedClusters}. Each cluster must be \emph{vertical} in the sense that the points in a given cluster agree in their first $p$ coordinates. 

\begin{figure}[h!]  
\begin{center}
    \includegraphics{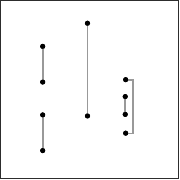}
    \caption{An element of $\cV_{(2,2,2,2,2)}(\R^{1,1})$. The 10 points in $\R^2$ are partitioned into 5 unlabelled ``vertical clusters" of 2 unlabelled points each.} \label{UnorderedClusters}       
        \end{center}
        \end{figure}

We are particularly interested in the case where $K$ is the $n$-tuple $K=(k, \dots, k)$ for some $k \geq 1$. In this case we write $\widetilde{\cV}_n^k (\R^{p,q})$ for $\widetilde{\cV}_K (\R^{p,q})$ and ${\cV}_n^k (\R^{p,q})$ for ${\cV}_K (\R^{p,q})$. The wreath product $S_k \wr S_n$ acts freely on the space $\widetilde{\cV}_n^k (\R^{p,q})$.

Latifi \cite{Latifi} proved homological stability for the families $(\cV_n^k (\R^{1,2}))_n$ for fixed $k \geq 1$. In fact, as pointed out by Bianchi--Kranhold \cite{Vconf}, Latifi's proof applies to any $p,q$ with $p+q \geq 3$. Bianchi--Kranhold \cite{Vconf} computed the cohomology groups of the ordered vertical configuration spaces, and extended the homological stability result for the unordered vertical configuration space for all $p \geq 0$, $q \geq 1$, with $(p,q) \neq (0,1)$. We review the results of Bianchi--Kranhold and Latifi  in Sections  \ref{BK-Calculation} and \ref{TransMapSection}. 

\subsection{Representation stability for vertical configuration spaces, and its consequences} 

Given the homological stability result for the unordered vertical configuration spaces, it is natural to ask: does the ordered vertical configuration space satisfy representation stability, generalizing the behaviour of $\Conf_n(\R^s)$ and $\UConf_n(\R^s)$? Our project resolves this question in the affirmative for $p\geq 0$ and $q \geq 2$. 

To answer this question, we apply the machinery  of  $\fig$-modules and $\fig\sh$-modules, as developed by Sam--Snowden \cite{SamSnowden}, Gan--Li \cite{GL}, Ramos \cite{Ramos}, Kupers--Miller \cite{KupersMiller}, and Casto \cite{casto}. This machinery generalizes the work of Church--Ellenberg--Farb to sequences of representations of wreath products $G \wr S_n$ for $G$ a fixed finite group. The following is our main theorem. 

\newtheorem*{thm:mainThmI}{Theorem \ref{mainThm}}
\begin{thm:mainThmI}
Let $p \geq 0$ and $q \geq 2$. Fix $k \geq 1$ and (co)homological degree $d \geq 0$. 
\begin{enumerate}
    \item[(\ref{MainThmI})] Let $R$ be a commutative ring.  The sequences $\left( H_d(\widetilde{\cV}^k_n (\R^{p,q});R) \right)_{n}$ and $\left( H^d(\widetilde{\cV}^k_n (\R^{p,q});R) \right)_{n }$ of $S_{k} \wr S_n$-representations are representation stable in the sense that they are $\FI_{S_k}\sh$-modules finitely generated in degree at most $ \displaystyle \left\lfloor \frac{2d}{q-1} \right\rfloor$.  
\end{enumerate}
\end{thm:mainThmI}

The theory of $\FI_{S_k}\sh$-modules implies, as a consequence, that in each (co)homological degree,  the sequence of $S_k \wr S_n$-representations $H^d(\widetilde{\cV}^k_n (\R^{p,q});R)$ (respectively, $H_d(\widetilde{\cV}^k_n (\R^{p,q});R)$) has a highly constrained form: it can be expressed as certain induced representations, as described in Corollary \ref{CorInducedReps} below. Heuristically, this means, up to the action of the wreath product $S_k \wr S_n$, all the degree-$d$ (co)homology classes ``come from'' the spaces $\widetilde{\cV}^k_n (\R^{p,q})$ for $n \leq \left\lfloor \frac{2d}{q-1} \right\rfloor$. 

\newtheorem*{thm:CorInducedReps}{Corollary \ref{CorInducedReps}}
\begin{thm:CorInducedReps}
    For $i=0, \dots \left\lfloor \frac{2d}{q-1} \right\rfloor$, there exist $S_{k} \wr S_i$-representations $U_i$ such that, as $S_{k} \wr S_n$-representations, 
$$H^d(\widetilde{\cV}^k_n (\R^{p,q});R) \cong \bigoplus_{i= 0}^{\left\lfloor \frac{2d}{q-1} \right\rfloor} \Ind_{(S_{k}\wr S_i)\times (S_{k} \wr S_{n-i})}^{S_{k} \wr S_n}(U_i\boxtimes R) \qquad \text{for all $n$, }$$ 
where $R$ denotes the trivial $(S_{k} \wr S_{n-i})$-representation. We interpret the summand $\Ind_{(S_{k}\wr S_i)\times (S_{k} \wr S_{n-i})}^{S_{k} \wr S_n}(U_i\boxtimes R)$ to be zero when $n <i$. The same statement holds for $H_d(\widetilde{\cV}^k_n (\R^{p,q});R) $. 
\end{thm:CorInducedReps}

The structure of these induced representations implies that, when we take coefficients $R=\Q$, the patterns of irreducible representations stabilize, analogous to Definition \ref{DefnRepStableSn}. The classification of irreducible representations of wreath products $S_k \wr S_n$ is reviewed in Section \ref{SectionClassificationWreathIrreps} (see Corollary \ref{ClassificationWreathIrreps}). 

\newtheorem*{thm:mainThmII}{Theorem \ref{mainThm}}
\begin{thm:mainThmII}
Let $p \geq 0$ and $q \geq 2$. Fix $k \geq 1$ and (co)homological degree $d \geq 0$. 
\begin{enumerate} 
    \item[(\ref{CorMultiplicityStability})]  The decomposition of the rational $S_k \wr S_n$-representations $H^d(\widetilde{\cV}^k_n (\R^{p,q});\Q)$ into irreducible components stabilizes in $n$ in the sense of Definition \ref{RepStabDef}, with stable range $ \displaystyle n \geq \left\lfloor \frac{4d}{q-1} \right\rfloor$. The same statement holds for $H_d(\widetilde{\cV}^k_n (\R^{p,q});\Q)$.  
\end{enumerate}
\end{thm:mainThmII}

The description of the (co)homology groups as induced $S_k \wr S_n$-representations in Corollary \ref{CorInducedReps} also has implications for the sequence of characters of these representations. Specifically, for each (co)homological degree $d$, the sequence of characters is equal to a \emph{character polynomial} (defined in Section \ref{SectionCharacterPolynomials}) that is independent of $n$. 

\newtheorem*{thm:CorHomologyCharacterPolynomial}{Corollary \ref{CorHomologyCharacterPolynomial}}
\begin{thm:CorHomologyCharacterPolynomial}
Fix $d \geq 0$. The sequence of characters of the $S_{k} \wr S_n$-representations $H^d(\widetilde{\cV}^k_n (\R^{p,q});\Q)$ can be expressed as a single character polynomial in the coloured cycle-counting functions
    $$P^{H^d(\widetilde{\cV}^k_n(\R^{p,q}))} \in \mathbb{Q}[\; X_r^{C_i} \; | \; r \geq 1, \; \text{$C_i$ a conjugacy class of $S_{k} $ }] $$ of degree at most  $\left\lfloor \frac{2d}{q-1} \right\rfloor$, independent of $n$. The same statement holds for the $H_d(\widetilde{\cV}^k_n (\R^{p,q});\Q)$. 
\end{thm:CorHomologyCharacterPolynomial}

Representation stability for the $S_k \wr S_n$-representations $H^d(\widetilde{\cV}^k_n (\R^{p,q});\Q)$ and $H_d(\widetilde{\cV}^k_n (\R^{p,q});\Q)$ means in particular that $S_k \wr S_n$-coinvariants of these representations stabilize for all $n$ sufficiently large.  In Section \ref{TransMapSection}, we explain why this implies classical stability for the (co)homology of the \emph{unordered} vertical configuration spaces ${\cV}^k_n (\R^{p,q})$. We deduce a new proof of rational cohomological stability for these spaces. Although---in contrast to the work of Latifi \cite{Latifi} and Bianchi--Kranhold \cite{Vconf}---our results only hold with rational coefficients, in the rational case we improve the stable range. 

\newtheorem*{thm:CorClassicalStability}{Corollary \ref{CorClassicalStability}}
\begin{thm:CorClassicalStability}
    Fix  $p \geq 0$, $q \geq 2$, $k \geq 1$, and (co)homological degree $d$.  The (co)homology of the unordered vertical configuration spaces, $\left( H_d({\cV}^k_n (\R^{p,q});\Q) \right)_{n}$ and $\left( H^d({\cV}^k_n (\R^{p,q});\Q) \right)_{n}$, stabilize. Specifically, the maps on homology (respectively, cohomology) induced by the stabilization maps $\phi_n: {\cV}^k_n (\R^{p,q}) \to  {\cV}^k_{n+1} (\R^{p,q})$ (respectively, the forgetful maps $\phi^n: {\cV}^k_{n+1} (\R^{p,q}) \to   {\cV}^k_{n} (\R^{p,q})$) are always injective, and are isomorphisms for $n \geq \left\lfloor \frac{2d}{q-1} \right\rfloor$. 
\end{thm:CorClassicalStability}

\subsection{Overview of the paper} 

In Section \ref{SectionWreathProducts} we review the structure of wreath products and their representation theory.  In Sections \ref{SectionCategoriesFIG}, \ref{SectionRepStability}, \ref{figsharp}, and \ref{SectionCharacterPolynomials}, we present the theory of $\fig$-modules, $\fig\sh$-modules, and their connection to representation stability. In Section \ref{VConfSection}, we apply this theory to prove the main theorems of the paper. 

\subsubsection*{Acknowledgments}

Jennifer Wilson is grateful for the support of the NSF CAREER grant DMS-2142709. David Baron, Chenglu Wang, and Chunye Yang thank the NSF and the University of Michigan for the support of their REU program (``Research Experience for Undergraduates''). David Baron was funded by NSF grant DMS-2142709. Chenglu Wang and Chunye Yang were funded by the University of Michigan. Urshita Pal is grateful for a Research Assistantship from the University of Michigan to support her work on the project and mentorship role in the REU. 

The authors thank Gahl Shemy for her mentorship during the REU. They thank Nir Gadish for helpful conversations.

\end{section}
\section{Irreducible representations of wreath products} \label{SectionWreathProducts} 

In this section we review the definition and representation theory of wreath products, as it is developed in James--Kerber \cite[Chapter 4]{JamesKerber}. 

\subsection{The structure of a wreath product $G \wr H$} 

Suppose that $X$ is a set with a faithful, transitive action by a group $G$. For $n \in \Z_{\geq 1}$ let $[n]=\{1, 2, \ldots, n\}$ and $[0]=\varnothing$. Consider the product $[n]\times X$, which we may interpret as the disjoint union of $n$ (labelled) copies of $X$. The product $G^n$ acts on $[n]\times X$, with the $i$th factor $G$ of $G^n$ acting on $\{i\}\times X$ and fixing all other copies of $X$ pointwise. Any fixed subgroup $H$ of the symmetric group $S_n$ acts on $[n]\times X$ by acting on $[n]$ by permutations and fixing $X$ pointwise; it permutes the $n$ copies of $X$. These two actions typically do not commute, but generate a group of symmetries of $[n]\times X$ called the \emph{wreath product} of $G$ by $H$ and denoted $G \wr H$. In the following definition, we give an algebraic characterization of $G \wr H$. 

\begin{definition}\label{DefWreathProduct}
    Let $G$ be a group and $H$ a subgroup of the symmetric group $S_n$. We identify the $n$-fold Cartesian product $G^n$ with the set of functions from $[n]=\{1,\dots,n\}$ into $G$:

    $$G^n=\{ \alpha \ | \ \alpha: [n] \longrightarrow G\}.$$
    The group $S_n$ acts on $[n]$ by its standard action by permutations, and acts on $G^n$ by precomposition, 
  \begin{align*} \pi: G^n &\longrightarrow G^n \\ 
  \alpha & \longmapsto  \alpha \circ \pi^{-1} .
  \end{align*} 
    
   We define the $\emph{wreath product}$ of $G$ by $H$, denoted $G\wr H$, to be the group with underlying set
    $$G^n \times H=\{(\alpha, \pi) \; |\; \ \alpha \in G^n, \pi \in H\}$$ and group operation
    $$(\alpha, \pi)(\alpha',\pi')=(\alpha\cdot \alpha'_{\pi}, \pi\pi')$$
    where $(\alpha \cdot \alpha')(i)$ denotes the pointwise product $\alpha(i) \alpha'(i)$ with respect to the group operation on $G$, $\pi\pi'$ is the product as elements of $H$, and $\alpha'_{\pi}=\alpha' \comp \pi^{-1}$.
\end{definition}

An element $(\alpha, \pi)$ of $G \wr H$ has inverse $(\bar{\alpha} \circ \pi, \pi^{-1})$, where $\bar{\alpha}$ is the pointwise inverse of $\alpha$, with $\bar{\alpha}(i) = \alpha(i)^{-1}$.

\begin{remark} \label{WreathViaMatrices} An alternate way to model the group $G \wr H$ is via \emph{monomial matrices}. Realize the group $H \subseteq S_n$ as a subgroup of $n \times n$ permutation matrices. Consider the set of all matrices obtained by replacing each of the 1's in these permutation matrices by an element of $G$. Then the group $G \wr H$ consists of these matrices, with the group product defined by matrix multiplication and the group operation on $G$.  
\end{remark} 

We introduce notation for the distinguished subgroups of $G \wr H$ isomorphic to $G^n$ and $H$. With this notation, we state Lemma \ref{WreathProductStructure} on the basic structure of the wreath product. 

\begin{definition}
     Let $G$, $H$ be as in Definition \ref{DefWreathProduct}. Let $1_H$ denote the identity permutation in $H$. Let $e$ denote the identity element of $G^n$, the constant function at the identity $1_G$ of $G$. The \emph{base group} $G^*$ of $G \wr H$ is the subgroup $$G^*=\{ (\alpha, 1_{H}) \;|\; \alpha \in G^n\}$$
    isomorphic to $G^n$. The $\emph{complement}$ of $G^*$ is the subgroup $$H'=\{(e, \pi) \;|\; \pi \in H\}$$
    isomorphic to $H$. 
\end{definition}

\begin{lemma}[{James--Kerber \cite[4.1.15]{JamesKerber}}] \label{WreathProductStructure}
     Let $G$, $H$ be as in Definition \ref{DefWreathProduct}. Then,
     \begin{enumerate}[(i)]
         \item As a set $G \wr H$ is equal to the product of subsets $G^* \cdot H' = \{(\alpha, 1_H)(e, \pi) \; | \; \alpha \in G^n, \pi \in H\}$. 
         \item $G^*$ is a normal subgroup of $G \wr H$.
         \item $G^* \cap H'$ is the trivial subgroup $\{(e, 1_{H})\}$.
     \end{enumerate}     
\end{lemma}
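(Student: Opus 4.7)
The proof is a direct verification from the group operation in Definition \ref{DefWreathProduct}, so the plan is essentially to compute carefully in each of the three parts and to be precise about the convention $\alpha_\pi = \alpha \circ \pi^{-1}$.

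For part (i), the plan is to show that every element $(\alpha,\pi) \in G \wr H$ factors as the product of an element of $G^*$ with an element of $H'$. Using the group law, I compute
\[
(\alpha, 1_H)(e, \pi) = (\alpha \cdot e_{1_H}, 1_H \cdot \pi) = (\alpha, \pi),
\]
since $e_{1_H} = e \circ 1_H^{-1} = e$ and $e$ is the identity of $G^n$. This gives the equality $G\wr H = G^* \cdot H'$ of sets; in fact the decomposition is unique, which will be useful as a sanity check alongside (iii).

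For part (ii), the plan is to verify the normality $(\alpha,\pi) G^* (\alpha,\pi)^{-1} \subseteq G^*$ by a direct conjugation computation. Given an arbitrary $(\beta, 1_H) \in G^*$, I first compute $(\alpha,\pi)(\beta, 1_H) = (\alpha \cdot \beta_\pi, \pi)$ and then multiply on the right by $(\alpha,\pi)^{-1} = (\bar\alpha \circ \pi, \pi^{-1})$, obtaining
\[
(\alpha \cdot \beta_\pi, \pi)(\bar\alpha \circ \pi, \pi^{-1}) = \bigl(\alpha \cdot \beta_\pi \cdot (\bar\alpha \circ \pi)_\pi,\ 1_H\bigr) = (\alpha \cdot \beta_\pi \cdot \bar\alpha,\ 1_H),
\]
using $(\bar\alpha \circ \pi)_\pi = \bar\alpha \circ \pi \circ \pi^{-1} = \bar\alpha$. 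Since the second coordinate is $1_H$, the result lies in $G^*$. A minor bookkeeping point to be careful about is the direction of the precomposition $\alpha_\pi = \alpha \circ \pi^{-1}$; this is the only place where a sign-like error could creep in.

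Part (iii) is immediate: an element of $G^* \cap H'$ must simultaneously have the form $(\alpha, 1_H)$ and $(e, \pi)$, forcing $\alpha = e$ and $\pi = 1_H$. I expect no real obstacle here — the entire lemma is a routine unwinding of the semidirect product structure built into Definition \ref{DefWreathProduct}, and the decomposition in (i) together with (ii) and (iii) realizes $G \wr H$ as the internal semidirect product $G^* \rtimes H'$. The only place requiring attention is the consistent use of the twisted multiplication, and I would organize the proof so that the computation $(\alpha, 1_H)(e,\pi) = (\alpha,\pi)$ appears first to fix notation before the conjugation calculation in (ii).
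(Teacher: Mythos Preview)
Your proof is correct; all three computations are straightforward and accurate given the convention $\alpha_\pi = \alpha \circ \pi^{-1}$. The paper itself gives no proof of this lemma, simply citing James--Kerber \cite[4.1.15]{JamesKerber}, so there is no approach to compare against.
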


\subsection{Conjugacy classes in $G \wr S_n$} \label{SectionWreathConjClasses}

To study characters and character polynomials of the wreath product $G \wr S_n$, we review the classification of its conjugacy classes. Throughout this section, we assume that $G$ has countably many conjugacy classes $C^1,C^2,\dots$.

\begin{notation}\label{notation disjoint cycle notation}
    Fix an element $\pi \in S_n$. If the cycle type of $\pi$ has $c=c(\pi)$ cycles of lengths $\ell_1, \dots, \ell_c$, then we can uniquely express $\pi$ in cycle notation  

    $$\Big(j_1 \; \pi(j_1) \cdots \pi^{\ell_1-1}(j_1)\Big)\Big(j_2 \; \pi(j_2) \cdots  \pi^{\ell_2-1}(j_2)\Big) \cdots \Big(j_c \; \pi(j_c) \cdots  \pi^{\ell_c-1}(j_c)\Big)$$ 
    by imposing the conditions that $j_\nu \leq \pi^i (j_{\nu})$ for all $i, \nu$, and $j_1 \leq j_2 \leq \dots \leq j_c$. 
    \end{notation}

\begin{definition}[{James--Kerber \cite[4.2.1]{JamesKerber}}] 
    Let $\pi \in S_n$ be written in cycle notation as in Notation \ref{notation disjoint cycle notation} and let $(\alpha, \pi) \in G \wr S_n$. Then, to the $\nu$th cyclic factor $ (j_{\nu} \; \pi(j_{\nu}) \; \cdots \; \pi^{\ell_{\nu}-1}(j_{\nu}))$, we associate the element of $G$
    \begin{equation*}
        g_{\nu}(\alpha,\pi)=\alpha(j_\nu)\alpha(\pi^{-1}(j_\nu))\cdots \alpha(\pi^{-\ell_\nu+1}(j_\nu))
    \end{equation*}
    which we call the \emph{$\nu$th cycle product} of $(\alpha,\pi)$.
\end{definition}
James--Kerber show that the conjugacy class of $(\alpha, \pi)$ in $G \wr S_n$ is indexed by the data of the lengths $\ell_{\nu}$ of each cycle of $\pi$, along with the conjugacy class of 
its cycle product $g_{\nu}(\alpha,\pi)$. 
\begin{definition}[{James--Kerber \cite[4.2.2]{JamesKerber}}] \label{DefnWreathConjClasses}
    For $(\alpha,\pi) \in G \wr S_n$,  let $a_{ik}(\alpha,\pi)$ be the number of cycles of length $k$  of $\pi$ for which the associated cycle product $ g_{\nu}(\alpha,\pi)$ belongs to the conjugacy class $C^i$ of $G$. Then, the \emph{type} of $(\alpha,\pi)$ is the matrix 
   $$ a(\alpha,\pi)=\big[a_{ik}(\alpha,\pi)\big].$$
\end{definition}
\begin{theorem}[{James--Kerber \cite[4.2.8]{JamesKerber}}]
   Two elements $(\alpha,\pi)$ and $(\alpha',\pi')$ of $G \wr S_n$ are conjugate if and only if $a(\alpha,\pi)=a(\alpha',\pi')$.
\end{theorem}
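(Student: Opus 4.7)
The plan is to prove the two implications by direct computation, using the decomposition $G \wr S_n = G^* \cdot H'$ from Lemma \ref{WreathProductStructure}.

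For the forward direction, I would analyze the effect of conjugation by base group elements and by complement elements on the cycle products. Conjugating $(\alpha, \pi)$ by $(\beta, 1_H) \in G^*$ yields $(\beta \cdot \alpha \cdot (\bar\beta \circ \pi^{-1}), \pi)$, and a telescoping calculation shows that the $\nu$th cycle product transforms to $\beta(j_\nu)\, g_\nu(\alpha,\pi)\, \beta(j_\nu)^{-1}$, which lies in the same $G$-conjugacy class. Conjugating instead by $(e, \sigma) \in H'$ gives $(\alpha \circ \sigma^{-1}, \sigma \pi \sigma^{-1})$; the cycles of $\sigma \pi \sigma^{-1}$ are the $\sigma$-images of the cycles of $\pi$, possibly cyclically rotated to place the minimum element first, and the corresponding cycle product is then a cyclic rotation of the original $g_\nu(\alpha, \pi)$, hence conjugate to it in $G$. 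Together these two computations show that the matrix $a(\alpha, \pi)$ is a conjugation invariant.

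For the backward direction, suppose $a(\alpha, \pi) = a(\alpha', \pi')$. Since the $k$th column of the type matrix records the multiplicities of $k$-cycles, $\pi$ and $\pi'$ have the same cycle type, and moreover the counts of $k$-cycles whose cycle products lie in any given conjugacy class $C^i$ agree for $\pi$ and $\pi'$. I would therefore choose $\sigma \in S_n$ so that $\sigma \pi \sigma^{-1} = \pi'$ \emph{and} each cycle of $\pi$ is sent to a cycle of $\pi'$ whose $\alpha'$-cycle product lies in the same $G$-conjugacy class as the corresponding $\alpha$-cycle product. Conjugating $(\alpha, \pi)$ by $(e, \sigma)$ then reduces the problem to the case $\pi = \pi'$ with matching cycle-product conjugacy classes on every cycle.

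It remains to build $\beta \in G^n$ so that $(\beta, 1_H)$ conjugates the new $(\alpha, \pi)$ to $(\alpha', \pi)$; by the formula from the forward direction, this amounts to solving $\alpha'(i) = \beta(i)\, \alpha(i)\, \beta(\pi^{-1}(i))^{-1}$ for all $i$. I would do so cycle by cycle: within each cycle of $\pi$ with minimum element $j_\nu$, choose $\beta(j_\nu)$ to be any element of $G$ conjugating $g_\nu(\alpha, \pi)$ to $g_\nu(\alpha', \pi)$, then propagate $\beta$ around the cycle via the recursion $\beta(\pi^{-1}(i)) = \alpha'(i)^{-1}\, \beta(i)\, \alpha(i)$. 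The main technical obstacle is verifying consistency: after $\ell_\nu$ steps this recursion must return to the initially chosen $\beta(j_\nu)$. Expanding the recursion shows that the closure condition is precisely $\beta(j_\nu)\, g_\nu(\alpha, \pi)\, \beta(j_\nu)^{-1} = g_\nu(\alpha', \pi)$, which holds by the choice of $\beta(j_\nu)$; thus $\beta$ is well-defined on each cycle and yields the required conjugator.
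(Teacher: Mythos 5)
Your proof is correct: the computation of how conjugation by $G^*$ and by $H'$ transforms the cycle products, followed by the cycle-matching choice of $\sigma$ and the cycle-by-cycle recursive construction of $\beta$ with the closure condition $\beta(j_\nu)\,g_\nu(\alpha,\pi)\,\beta(j_\nu)^{-1}=g_\nu(\alpha',\pi)$, is exactly the standard argument. The paper itself does not prove this statement but cites James--Kerber [4.2.8], whose proof proceeds along the same lines as yours.
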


Concretely, the conjugacy class of an element of $G \wr S_n$ is indexed by 
\begin{itemize}
    \item a cycle type, that is, an (unordered) partition of $n$, 
    \item for each cycle, a corresponding conjugacy class of $G$. 
\end{itemize}

\begin{remark}
    Note that the numbers $a_{ik}(\alpha, \pi)$ satisfy
    $$ \sum_{i,k} ka_{ik}(\alpha, \pi) = n.$$
    Conversely, given a matrix with nonnegative entries satisfying the above equation, it is not hard to construct an element of $G \wr S_n$ whose type is given by that matrix.
\end{remark}

\begin{corollary}[{James--Kerber \cite[4.2.9]{JamesKerber}}]
    Suppose $G$ has $s \in \Z_{\geq 1}$ conjugacy classes and $p(m)$ denotes the number of partitions of $m$. Then the number of conjugacy classes of $G \wr S_n$ is equal to 
   $$ \sum_{(n)}p(n_1)\cdots  p(n_s)$$
    where the sum is taken over all $s$-tuples $(n_1,\dots,n_s)$ such that $ {\displaystyle \sum_{i=1}^s n_i=n}$.
\end{corollary}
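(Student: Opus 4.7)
The plan is to invoke the classification of conjugacy classes given in the theorem preceding the corollary, which states that conjugacy classes of $G\wr S_n$ are in bijection with \emph{type matrices} $a=[a_{ik}]$ whose rows are indexed by conjugacy classes $C^1,\dots,C^s$ of $G$ and whose columns are indexed by positive integers $k$. My task then reduces to a pure counting problem: enumerate the nonnegative integer matrices $[a_{ik}]$ satisfying the constraint
$$\sum_{i=1}^{s}\sum_{k\geq 1} k\,a_{ik}=n,$$
which was noted in the remark immediately preceding the corollary (and this constraint is also sufficient for realizability, as observed there).

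The key idea is to stratify this count by the row sums $n_i=\sum_{k\geq 1} k\,a_{ik}$ for $i=1,\dots,s$. The overall constraint becomes $\sum_{i=1}^s n_i=n$, so the matrices split into disjoint families indexed by the $s$-tuples $(n_1,\dots,n_s)$ of nonnegative integers summing to $n$. For each fixed tuple, the rows of the matrix are independent, so I can count them one row at a time and multiply.

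Next, I would observe that a single row $(a_{i1},a_{i2},\dots)$ of nonnegative integers with $\sum_k k\,a_{ik}=n_i$ is precisely the data of a partition of $n_i$: namely, the partition in which the part $k$ appears with multiplicity $a_{ik}$. This bijection with partitions of $n_i$ gives exactly $p(n_i)$ choices for the $i$th row. Multiplying over $i$ and then summing over the valid tuples $(n_1,\dots,n_s)$ yields
$$\#\{\text{conjugacy classes of }G\wr S_n\}=\sum_{\substack{(n_1,\dots,n_s)\\ n_1+\dots+n_s=n}} p(n_1)\cdots p(n_s),$$
which is the claimed formula.

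There is no real obstacle here; the statement is essentially a bookkeeping consequence of the classification of conjugacy classes proved in the cited theorem. The only thing to be slightly careful about is the convention $p(0)=1$, which is needed so that tuples $(n_1,\dots,n_s)$ with some $n_i=0$ (corresponding to rows of all zeros, i.e., no cycles whose cycle product lies in $C^i$) contribute correctly to the sum.
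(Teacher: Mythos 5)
Your proposal is correct, and it is the standard bookkeeping argument: the paper states this corollary as a citation to James--Kerber without proof, but the remark immediately preceding it (the constraint $\sum_{i,k} k\,a_{ik}=n$ and its realizability) sets up exactly the reduction you carry out. Stratifying the type matrices by row sums and identifying each row with a partition of $n_i$, with the convention $p(0)=1$, is precisely the intended derivation.
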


\subsection{The classification of irreducible $G \wr H$-representations} \label{SectionClassificationWreathIrreps}

In this paper we consider representations of wreath products of a finite group $G$ with the full symmetric group $S_n$. For the remainder of the discussion, we assume that $G$ is a finite group and that $\mathbb{K}$ is a splitting field for $G$ of characteristic zero. 

We will now review the classification of irreducible representations of $G \wr H$ over $\mathbb{K}$.

\begin{notation} \label{ExternalInternalTensor} Given an $n$-tuple of representations $D_1, \ldots, D_n$ of $G$ over $\mathbb{K}$, we use the notation $D_1 \boxtimes D_2 \boxtimes \dots \boxtimes D_n$ to denote the representation of $G^n$ obtained as the \emph{external} tensor product of the representations $D_i$; this is the $\mathbb{K}$-vector space $D_1 \otimes_\mathbb{K} D_2 \otimes_\mathbb{K} \dots \otimes_\mathbb{K} D_n$ with action of $\alpha \in G^n$ defined by 
$$ \alpha \cdot (d_1 \otimes d_2 \otimes \dots \otimes d_n) = (\alpha(1)\cdot d_1) \otimes (\alpha(2)\cdot d_2) \otimes \dots \otimes (\alpha(n)\cdot d_n) \quad \text{for } d_1 \otimes \dots \otimes d_n \in D_1 \otimes_\mathbb{K} D_2 \otimes_\mathbb{K} \dots \otimes_\mathbb{K} D_n.$$ 
In contrast, the \emph{internal} tensor product $U_1 \otimes U_2$ of representations of a group $G'$ is the tensor product $U_1 \otimes_{\mathbb{K}} U_2$ equipped with the diagonal action of $G'$, 
$$ g\cdot (u_1 \otimes u_2) = (g\cdot u_1) \otimes (g \cdot u_2) \quad \text{for } g \in G' \text{ and } u_1 \otimes u_2 \in U_1 \otimes_{\mathbb{K}} U_2.
$$
\end{notation} 

 Let $\{D^1,\dots, D^r\}$ be a complete list of inequivalent irreducible representations of $G$. A standard result from the representation theory of finite groups (see for example Fulton--Harris \cite[Exercise 2.36]{FultonHarris}) states that the irreducible representations of the finite product $G^* \cong G^n$ have the form  $$D_1 \boxtimes  D_2 \boxtimes \dots \boxtimes D_n, \qquad \text{
 where $D_i \in \{D^1,\dots, D^r\}$ for all $i \in \{1,\dots,n\}$.}$$
 These $r^n$ representations are irreducible, pairwise inequivalent, and constitute a complete list of irreducible representations of $G^n$.  

 %Given an irreducible representation $D^*=D_1 \boxtimes  D_2 \boxtimes \dots \boxtimes D_n$ of $G^n$, we can associate an $r$-tuple $(n_1,\dots n_r)$, where $n_i$ denotes the number of factors isomorphic to $D^i$ in $D^*$. We call this tuple the $\emph{type}$ of $D^*$. The type of $D^*$ is closely associated to the $\emph{inertia group}$ of $D^*$, which we now define.

 \begin{definition}[James--Kerber {\cite[Section 4.3]{JamesKerber}}]
     Let $G$ be a finite group and $H$ a subgroup of $S_n$. Fix an irreducible representation $D^*=D_1 \boxtimes  D_2 \boxtimes \dots \boxtimes D_n$ of $G^*$. The \emph{type} of $D^*$ is the $r$-tuple  $\mu = (n_1, \dots, n_r)$, where $n_i$ denotes the number of factors isomorphic to $D^i$ in $D^*$.  Let $S_{\mu} \cong S_{n_1} \times \dots \times S_{n_r}$ be the subgroup of $S_n$ that stabilizes the partition of the index set $[n]$ of $D^*$ into the $n_1$ indices $i$ corresponding to factors $D_i$ equal to $D^1$, the $n_2$ indices $i$ corresponding to factors $D_i$ equal to $D^2$, etc. Let $H_{D^*}$ denote $H \cap S_{\mu}$. Let $H'_{D^*} \cong H_{D^*}$ denote the corresponding subgroup of $G \wr H$, 
     $$H'_{D^*} = \{ (e, \pi) \; | \; \pi \in  H_{D^*}\} \subseteq H' \subseteq G \wr H.$$
     The \emph{inertia subgroup} of $D^*$ is the subgroup  $G^* \cdot H'_{D^*}$ of $G \wr H$. The inertia subgroup is isomorphic to the wreath product of $G$ by $H_{D^*}$, and we denote it by $G \wr H_{D^*}$.
     \end{definition} 
 
\begin{definition}
 The \emph{conjugate} of the $G^*$-representation $D^*$  by    $(\alpha, \pi) \in G \wr H$ is the $G^*$-representation obtained by pulling back the representation along the group homomorphism $G^* \to G^*$ given by conjugation by $(\alpha, \pi)$. Two $G^*$-representations are \emph{conjugate} if one is conjugate to the other via some   $(\alpha, \pi) \in G \wr H$. 
 \end{definition} 
James--Kerber characterize the inertia subgroup $G \wr H_{D^*}$ as the set of elements $(\alpha, \pi)$ in $G \wr H$ for which the conjugate of $D^*$ by $(\alpha, \pi)$ is isomorphic to $D^*$ as a $G^*$-representation: it is the stabilizer of $D^*$ with respect to the action of $G \wr H$ by conjugation on the set of irreducible $G^*$-representations.

With these definitions, James--Kerber \cite{JamesKerber} use Clifford Theory \cite{CliffordTheory} to classify the irreducible representations of $G \wr H$. 

    \begin{theorem}[{James--Kerber \cite[Theorem 4.3.34]{JamesKerber}}] \label{theorem irreps}
 Let $H$ a subgroup of $S_n$. Fix an irreducible $G^*$ representation $D^*=D_1 \boxtimes  D_2 \boxtimes \dots \boxtimes D_n$. The action of $G^*$ on $D_1 \boxtimes  D_2 \boxtimes \dots \boxtimes D_n$ extends to an action of the inertia subgroup $G \wr {H}_{D^*}$, where 
 $$ (\alpha, \pi) \cdot (d_1 \otimes d_2 \otimes \dots \otimes d_n) = (\alpha(1)\cdot d_{\pi^{-1}(1)}) \otimes (\alpha(2)\cdot d_{\pi^{-1}(2)}) \otimes \dots \otimes (\alpha(n)\cdot d_{\pi^{-1}(n)})$$ for $(\alpha, \pi) \in G \wr {H}_{D^*}$ and $d_1 \otimes \dots \otimes d_n \in  D_1 \boxtimes   \dots \boxtimes D_n$.  
 We denote this $G\wr {H}_{D^*}$-representation by $\widetilde{D}^*$. 

 Let $D'$ be an irreducible representation of ${H}_{D^*}$. We let $\widetilde{D}'$ denote the representation of $G\wr {H}_{D^*}$ pulled back from $D'$ along the quotient map 
 \begin{align*}
     G\wr {H}_{D^*} & \longrightarrow {H}_{D^*} \\ 
     (\alpha, \pi) & \longmapsto \pi .
 \end{align*}

 Let $\widetilde{D}^* \otimes \widetilde{D}'$ denote the internal tensor product of these $G\wr {H}_{D^*}$-representations. We write $D$ for the induced representation
$$D = \mathrm{Ind}_{G\wr {H}_{D^*}}^{G \wr H} \widetilde{D}^* \otimes \widetilde{D}'.$$ 
 Then, 
        \begin{enumerate}[(i)]
            \item $D$ is irreducible and every irreducible representation of $G \wr H$ over $\mathbb{K}$ has this form.
            \item $D$ runs through a complete system of pairwise inequivalent and irreducible representations of $G \wr H$ if $D^*$ runs through a complete system of pairwise non-conjugate, irreducible representations of $G^*,$ and, while $D^*$ remains fixed, $D'$ runs through a complete system of pairwise inequivalent irreducible representations of ${H}_{D^*}$.
        \end{enumerate}
    \end{theorem}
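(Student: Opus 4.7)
The plan is to deduce this classification from Clifford theory applied to the normal subgroup $G^* \triangleleft G \wr H$ (normality being Lemma \ref{WreathProductStructure}(ii)), together with the explicit extension of $D^*$ furnished by the formula in the statement. Abstractly, Clifford theory asserts that induction from the inertia subgroup $T := \Stab_{G \wr H}(D^*)$ defines a bijection between isomorphism classes of irreducible $T$-representations whose restriction to $G^*$ is $D^*$-isotypic and isomorphism classes of irreducible $G \wr H$-representations whose restriction to $G^*$ contains $D^*$ as a constituent. The preliminary bookkeeping step is to confirm that $T = G \wr H_{D^*}$: conjugation by $(\alpha,\pi) \in G \wr H$ acts on $D^* = D_1 \boxtimes \cdots \boxtimes D_n$ by permuting the tensor factors according to $\pi$, so the stabilizer consists of those $(\alpha,\pi)$ with $D_{\pi^{-1}(i)} \cong D_i$ for every $i$, which cuts out precisely $G \wr H_{D^*}$.

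The central step is to check that the formula
$$(\alpha, \pi) \cdot (d_1 \otimes \dots \otimes d_n) = (\alpha(1) \cdot d_{\pi^{-1}(1)}) \otimes \dots \otimes (\alpha(n) \cdot d_{\pi^{-1}(n)})$$
actually defines a linear representation $\widetilde{D}^*$ of $G \wr H_{D^*}$. The hypothesis $\pi \in H_{D^*}$ guarantees $D_{\pi^{-1}(i)} = D_i$, so each tensor slot receives an element it is allowed to house; the remaining task is a direct verification of the associativity axiom against the wreath-product multiplication $(\alpha, \pi)(\alpha', \pi') = (\alpha \cdot \alpha'_\pi, \pi\pi')$, which amounts to a careful index calculation matching $\alpha(i)\alpha'(\pi^{-1}(i))$ against $(\alpha \cdot \alpha'_\pi)(i)$ in each slot.

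With $\widetilde{D}^*$ constructed, I invoke the standard tensor-product-with-an-extension lemma: since $G^* \triangleleft T := G\wr H_{D^*}$ with quotient $H_{D^*}$, and $\widetilde{D}^*$ extends $D^*$, the assignment $D' \mapsto \widetilde{D}^* \otimes \widetilde{D}'$ (internal tensor product, with $\widetilde{D}'$ the pullback through $T \twoheadrightarrow H_{D^*}$) is a bijection between isomorphism classes of irreducible $H_{D^*}$-representations and isomorphism classes of irreducible $T$-representations whose restriction to $G^*$ is $D^*$-isotypic. The heart of this lemma is the identity $\End_T(\widetilde{D}^* \otimes \widetilde{D}') \cong \End_{H_{D^*}}(D')$, which relies on $\End_{G^*}(D^*) = \mathbb{K}$ --- the precise place where the splitting-field hypothesis enters. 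Composing this bijection with Clifford induction gives statement (i). Statement (ii) follows by tracking both bijections: distinct $G\wr H$-conjugacy classes of $D^*$ yield disjoint families of irreducibles, and within a fixed class the assignment $D' \mapsto D$ is injective by the extension lemma.

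The main obstacle is the second step, i.e.\ the existence of the honest (non-projective) extension $\widetilde{D}^*$. Abstract Clifford theory only guarantees that $D^*$ extends to a \emph{projective} representation of its inertia subgroup, with a 2-cocycle obstruction in $H^2(H_{D^*}; \mathbb{K}^\times)$ that is generally nonzero. The reason this obstruction vanishes for wreath products is exactly that $T = G \wr H_{D^*}$ splits as the internal semidirect product $G^* \rtimes H_{D^*}$ by Lemma \ref{WreathProductStructure}, and because $\pi \in H_{D^*}$ preserves the type of $D^*$, the permutation action on tensor factors can be lifted to a genuine linear action rather than only a projective one. Verifying this concretely --- and thereby bypassing the cohomological machinery that appears in the general Clifford argument --- is what makes the wreath-product classification so explicit.
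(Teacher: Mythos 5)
Your outline is correct and follows the same route as the paper, which does not prove this theorem itself but quotes it from James--Kerber and notes that their argument is exactly the Clifford-theoretic one you describe: identify the inertia subgroup as $G \wr H_{D^*}$, write down the explicit extension $\widetilde{D}^*$ (which kills the $H^2$ obstruction because the wreath product splits), apply the tensor-product (Gallagher) correspondence, and induce up. The one small point worth making explicit in your inertia-subgroup computation is that conjugation by $(\alpha,\pi)$ also twists each tensor factor by an inner automorphism of $G$, which is harmless since inner automorphisms preserve isomorphism classes of irreducibles.
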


    We remark that when $H=S_n$, two $G^*$-representations are conjugate if and only if they have the same type $\mu$  \cite[Section 4.4]{JamesKerber}.  In this case, the group $H_{D^*} = S_{\mu}$ is a product of symmetric groups. Since irreducible representations of the symmetric group $S_n$ are in canonical bijection with partitions of $n$, Theorem \ref{theorem irreps} specializes as follows. 

    \begin{corollary} \label{IrrepsGwreathSn} Let $\chi_1, \ldots, \chi_r$ be a complete list of irreducible representations of a finite group $G$ over a splitting field $\mathbb{K}$ of $G$ of characteristic zero. Then the irreducible representations of $G \wr S_n$ are classified by partition-valued functions
    $\underline{\lambda}$ from $\{\chi_1, \ldots, \chi_r\}$ to partitions of nonnegative numbers, such that the sum $\sum_{i=1}^r |\underline{\lambda} (\chi_i)|$ is $n$. 
    \end{corollary}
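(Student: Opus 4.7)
The plan is to derive Corollary \ref{IrrepsGwreathSn} as a direct specialization of Theorem \ref{theorem irreps}, by making each ingredient explicit in the case $H = S_n$. First I would fix a complete list $\{\chi_1,\dots,\chi_r\}$ of irreducible $G$-representations over $\mathbb{K}$ and recall that the irreducible representations of $G^* \cong G^n$ are exactly the external tensor products $D^* = D_1 \boxtimes \dots \boxtimes D_n$ with each $D_i \in \{\chi_1,\dots,\chi_r\}$. The type of such a $D^*$ is $\mu(D^*) = (n_1,\dots,n_r)$, where $n_i$ counts the indices $j$ with $D_j = \chi_i$.

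Next I would invoke the fact (stated in the excerpt, citing James--Kerber \cite[Section 4.4]{JamesKerber}) that when $H = S_n$, two irreducible $G^*$-representations are conjugate if and only if they have the same type $\mu$. This reduces the choice of $D^*$ up to conjugation to the choice of an $r$-tuple $(n_1,\dots,n_r)$ of nonnegative integers with $n_1+\dots+n_r = n$. For such a representative, the inertia subgroup $H_{D^*} = S_n \cap S_\mu$ is exactly $S_\mu \cong S_{n_1}\times\dots\times S_{n_r}$.

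I would then identify the irreducible representations $D'$ of $H_{D^*} \cong S_{n_1}\times\dots\times S_{n_r}$. Since $S_\mu$ is a direct product of symmetric groups, its irreducible $\mathbb{K}$-representations are (via the standard representation theory of $S_N$ by Young diagrams, cf.\ Fulton--Harris \cite[Chapter 4]{FultonHarris}) external tensor products $V_{\lambda^{(1)}}\boxtimes\dots\boxtimes V_{\lambda^{(r)}}$ where $\lambda^{(i)}$ is a partition of $n_i$. Packaging this data as a function $\underline{\lambda}:\{\chi_1,\dots,\chi_r\}\to \{\text{partitions}\}$ with $\underline{\lambda}(\chi_i) = \lambda^{(i)}$ records precisely an $r$-tuple of partitions $(\lambda^{(1)},\dots,\lambda^{(r)})$ subject to $\sum_i |\underline{\lambda}(\chi_i)| = \sum_i n_i = n$.

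Finally, applying Theorem \ref{theorem irreps}(ii) gives that the induced representations $\mathrm{Ind}_{G\wr H_{D^*}}^{G\wr S_n}(\widetilde{D}^*\otimes \widetilde{D}')$ run through a complete list of pairwise inequivalent irreducibles as $D^*$ ranges over conjugacy-class representatives and $D'$ ranges over irreducible $H_{D^*}$-representations, which by the above is the same as letting $\underline{\lambda}$ range over all partition-valued functions with $\sum_i|\underline{\lambda}(\chi_i)| = n$. I do not foresee a hard step: the entire argument is a bookkeeping exercise on top of Theorem \ref{theorem irreps}. The only subtlety worth spelling out carefully is the identification of conjugacy classes of irreducible $G^*$-representations with types $\mu$, which relies on the fact that $S_n$ already contains all permutations of the $n$ tensor factors and so can move any $D^*$ to any other $G^*$-representation with the same multiset of factors.
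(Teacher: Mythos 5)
Your proposal is correct and follows essentially the same route as the paper: the paper also derives the corollary by specializing Theorem \ref{theorem irreps} to $H=S_n$, using the fact that conjugacy classes of irreducible $G^*$-representations correspond to types $\mu$, that the inertia subgroup is then the Young subgroup $S_\mu\cong S_{n_1}\times\dots\times S_{n_r}$, and that irreducible representations of this product are indexed by $r$-tuples of partitions of the $n_i$. The bookkeeping identification with partition-valued functions $\underline{\lambda}$ is exactly as you describe.
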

 The functions  $\underline{\lambda}$ are sometimes called \emph{multipartitions of $n$ with $r$ components.} \\

 When $G = S_k$, we can further specialize the theorem. We note, in this case, that every irreducible representation of $S_k \wr S_n$ is defined over the rational numbers (see James--Kerber \cite[Theorem 4.4.8 and Corollary 4.4.9]{JamesKerber}).
    
\begin{corollary} \label{ClassificationWreathIrreps}
 Let $\mathbb{K}$ be $\mathbb{Q}$ (or any field of characteristic zero). Then the irreducible representations of $S_k \wr S_n$ over $\mathbb{K}$ are in canonical bijection with functions $\underline{\lambda}$ from the set of partitions $\rho$ of $k$ to the set of all partitions, such that $\sum_{\rho} |\underline{\lambda}(\rho)|=n.$ Explicitly, if $W_{\rho}$ denotes the $S_k$-representation associated to $\rho$, then (in the notation of Theorem \ref{theorem irreps}) the function $\underline{\lambda}$ corresponds to the irreducible $S_k \wr S_n$-representation
$$\mathrm{Ind}_{S_k \wr \left(\prod_{\rho} S_{n_{\rho}}\right)}^{S_k \wr S_n} \left( \widetilde{\bigboxtimes_{\rho} W_{\rho}^{\boxtimes n_{\rho}} } \right) \otimes  \left( \widetilde{\bigboxtimes_{\rho} W_{\underline{\lambda}(\rho)}}\right)$$
 where $n_{\rho}$ denotes the size $|\underline{\lambda}(\rho)|$ of the partition $\underline{\lambda}(\rho)$. 
    
\end{corollary}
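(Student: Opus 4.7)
The plan is to deduce Corollary \ref{ClassificationWreathIrreps} essentially as a direct specialization of Corollary \ref{IrrepsGwreathSn} (which itself was extracted from Theorem \ref{theorem irreps}) to the case $G=S_k$, $H = S_n$, with $\mathbb{K}$ any field of characteristic zero. The two substantive tasks will be (a) translating the indexing data from ``partition-valued functions on irreducible representations of $G$'' into ``functions from partitions of $k$ to partitions,'' and (b) verifying the rationality assertion. Tracing through the explicit induced-representation formula is then essentially bookkeeping.

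First I would invoke the classical result (see for example Fulton--Harris \cite[Chapter 4]{FultonHarris}) that the irreducible representations of $S_k$ over any field of characteristic zero are in canonical bijection with partitions $\rho$ of $k$, and are all defined over $\mathbb{Q}$; this is enough to ensure $\mathbb{Q}$ is a splitting field for $S_k$, so that the hypotheses of Theorem \ref{theorem irreps} are satisfied with $\mathbb{K} = \mathbb{Q}$. Substituting into Corollary \ref{IrrepsGwreathSn}, the partition-valued function $\underline{\lambda}$ on the set $\{\chi_1,\dots,\chi_r\}$ of irreducible $S_k$-representations becomes precisely a partition-valued function on the set of partitions $\rho$ of $k$, subject to the condition $\sum_{\rho}|\underline{\lambda}(\rho)| = n$.

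Next I would unwind the explicit formula of Theorem \ref{theorem irreps} in this case. Given such a function $\underline{\lambda}$, set $n_\rho := |\underline{\lambda}(\rho)|$. The corresponding irreducible $S_k^*$-representation $D^*$ is (up to conjugation by a permutation in $S_n$) the external tensor product $\bigboxtimes_{\rho} V_\rho^{\boxtimes n_\rho}$; its type $\mu$ is the composition $(n_\rho)_\rho$, the stabilizer $S_\mu$ equals $\prod_\rho S_{n_\rho}$, so the inertia subgroup $H_{D^*}$ is $\prod_\rho S_{n_\rho}$, and the inertia subgroup in $S_k \wr S_n$ is $S_k \wr \prod_\rho S_{n_\rho}$. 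The irreducible representations of $\prod_\rho S_{n_\rho}$ are in bijection with tuples of partitions, so the choice of $D'$ in Theorem \ref{theorem irreps} is encoded precisely by the partitions $\underline{\lambda}(\rho)$, giving $D' = \bigboxtimes_\rho V_{\underline{\lambda}(\rho)}$. Plugging the extensions $\widetilde{D}^*$ and $\widetilde{D}'$ into the induction formula of Theorem \ref{theorem irreps} yields the displayed induced representation in the statement.

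The last piece to address is the rationality statement. Here I would simply cite James--Kerber \cite[Theorem 4.4.8 and Corollary 4.4.9]{JamesKerber}, as the authors already flag in the paragraph preceding the corollary: since $S_k$ has all irreducible representations defined over $\mathbb{Q}$, the wreath product construction inherits this property, so every irreducible $S_k \wr S_n$-representation is realizable over $\mathbb{Q}$ (and hence over any field of characteristic zero). The only point demanding genuine care in the whole argument is the reindexing step, which amounts to checking that the enumeration of conjugacy classes of irreducible $S_k^*$-representations used in Theorem \ref{theorem irreps} matches the enumeration by type $\mu = (n_\rho)_\rho$; this was noted in the remark following Theorem \ref{theorem irreps} and so requires no additional work.
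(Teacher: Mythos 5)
Your proposal is correct and matches the paper's (implicit) argument: the corollary is obtained exactly as you describe, by specializing Theorem \ref{theorem irreps} via Corollary \ref{IrrepsGwreathSn} to $G=S_k$, identifying $\Irr(S_k)$ with partitions of $k$, unwinding the inertia subgroup as $S_k\wr\prod_\rho S_{n_\rho}$, and citing James--Kerber for rationality. No gaps.
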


\subsection{The branching rule for $G\wr S_n$}

Let $G$ be a finite group and $\mathbb{K}$ be a splitting field of $G$ of characteristic zero. From the construction of the irreducible representations of $G\wr S_n$ and the Littlewood--Richardson rule for the symmetric groups, we can describe the structure of certain induced representations of wreath products. See, for example, Stein \cite[Theorem 4.5]{littlewood-richardson}, or Ingram--Jing--Stitzinger \cite[Theorem 4.3]{IJS-Wreath}. 

 \begin{theorem}[E.g. {\cite[Theorem 4.5]{littlewood-richardson}}]  Let $\chi_1, \ldots, \chi_r$ be a complete list of the $r$ irreducible representations of $G$ over $\mathbb{K}$. Fix integers $0 \leq d \leq n$. Let $\underline{\delta} = (\delta_1, \dots, \delta_r)$ be a multipartition of $d$ with $r$ components, and let $\underline{\rho} = (\rho_1, \dots, \rho_r)$ be a multipartition of $(n-d)$ of $r$ components. Let $W_{\underline{\delta}}$ and $W_{\underline{\rho}}$ denote the corresponding irreducible representations of $G \wr S_d$ and $G \wr S_{n-d}$, respectively. 

 $$ \Ind_{G \wr S_d  \times G \wr S_{n-d}}^ {G \wr S_n} W_{\underline{\delta}} \boxtimes W_{\underline{\rho}} \quad \cong \quad \bigoplus_{\underline{\lambda}} \left(  \prod_{i=1}^r C_{\rho_i, \delta_i}^{\lambda_i} \right) W_{\underline{\lambda}}  \ .$$ 
 Here, the sum is taken over all multipartitions $\underline{\lambda}$ of $n$ with $r$ components satisfying $|\lambda_i| = |\rho_i| + |\delta_i|$ for $i=1, \dots, r$. The constants $C_{\rho_i, \delta_i}^{\lambda_i}$ are the Littlewood--Richardson coefficients. 
 \end{theorem}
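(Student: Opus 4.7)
The plan is to reduce the claim to the classical Littlewood--Richardson rule for symmetric groups by unpacking the explicit description of irreducible $G \wr S_n$-representations from Theorem \ref{theorem irreps}. Writing $d_i = |\delta_i|$ and $e_i = |\rho_i|$, the representations $W_{\underline{\delta}}$ and $W_{\underline{\rho}}$ are constructed as representations induced from inertia subgroups: the $G$-parts have the form $\widetilde{\chi_i^{\boxtimes d_i}}$ and $\widetilde{\chi_i^{\boxtimes e_i}}$, tensored internally with pullbacks $\widetilde{V}_{\delta_i}$ and $\widetilde{V}_{\rho_i}$ of the corresponding irreducible $\prod_i S_{d_i}$- and $\prod_i S_{e_i}$-representations.

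First, I would apply transitivity of induction to rewrite $\Ind_{G \wr S_d \times G \wr S_{n-d}}^{G \wr S_n}(W_{\underline{\delta}} \boxtimes W_{\underline{\rho}})$ as an induction directly from the smaller subgroup $G \wr \prod_i (S_{d_i} \times S_{e_i})$ up to $G \wr S_n$, using the block embedding $G \wr S_d \times G \wr S_{n-d} \hookrightarrow G \wr S_n$. I would then factor this induction through the intermediate wreath product $G \wr \prod_i S_{d_i + e_i}$. In the first stage, the induction respects the product structure across the $r$ components. Within the $i$-th component, the $G$-part recombines via the natural isomorphism $\chi_i^{\boxtimes d_i} \boxtimes \chi_i^{\boxtimes e_i} \cong \chi_i^{\boxtimes (d_i + e_i)}$, while the symmetric-group factor decomposes by the classical Littlewood--Richardson rule
$$\Ind_{S_{d_i} \times S_{e_i}}^{S_{d_i + e_i}}\bigl(V_{\delta_i} \boxtimes V_{\rho_i}\bigr) \;\cong\; \bigoplus_{\lambda_i} C_{\rho_i, \delta_i}^{\lambda_i}\, V_{\lambda_i}.$$
Combining these decompositions across $i$ and applying the second stage of induction up from $G \wr \prod_i S_{d_i + e_i}$ to $G \wr S_n$ produces the irreducible representations $W_{\underline{\lambda}}$ of Corollary \ref{IrrepsGwreathSn} with total multiplicity $\prod_i C_{\rho_i, \delta_i}^{\lambda_i}$, as claimed.

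The main obstacle will be the careful bookkeeping required to verify that the extended action of the inertia subgroup on the external tensor product $\widetilde{\chi_i^{\boxtimes d_i}} \boxtimes \widetilde{\chi_i^{\boxtimes e_i}}$ agrees with the extended action on $\widetilde{\chi_i^{\boxtimes (d_i + e_i)}}$ prescribed by the formula in Theorem \ref{theorem irreps}; this amounts to checking that permutations acting on the enlarged tensor-product index set respect the combined action, a direct but notation-heavy calculation. A related subtlety is confirming that the external tensor product structure is preserved across both stages of induction, which follows from the fact that each $G \wr \prod_i S_{a_i}$ is canonically a direct product $\prod_i (G \wr S_{a_i})$, so inducing from a product of subgroups of a product of groups behaves componentwise.
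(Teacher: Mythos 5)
Your outline is correct and matches the approach the paper takes: the paper does not prove this theorem but cites it (Stein, Ingram--Jing--Stitzinger), remarking only that it follows ``from the construction of the irreducible representations of $G\wr S_n$ and the Littlewood--Richardson rule for the symmetric groups,'' which is precisely your reduction via transitivity of induction through $G \wr \prod_i (S_{d_i}\times S_{e_i})$ and $G \wr \prod_i S_{d_i+e_i}$. The one ingredient worth naming explicitly in your middle step is the projection formula $\Ind_H^{\Gamma}(\mathrm{Res}^{\Gamma}_H U \otimes V) \cong U \otimes \Ind_H^{\Gamma} V$ together with the compatibility of induction with pullback along $G\wr H \to H$; these are what let the $G$-part recombine into $\widetilde{\chi_i^{\boxtimes(d_i+e_i)}}$ while the symmetric-group factor decomposes by the classical rule.
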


 To obtain an analogue of Pieri's rule, we specialize to the case that $W_{\underline{\rho}}$ is the trivial $G \wr S_{n-d}$-representation. We let $\chi_1$ denote the trivial $G$-representation, so $W_{\underline{\rho}}$ corresponds to the multipartition $\underline{\rho} = ( (n-d), \varnothing,  \varnothing, \dots, \varnothing).$  We deduce the following. 
\begin{corollary} \label{PieriRuleWreath} Let $\chi_1, \ldots, \chi_r$ be a complete list of the $r$ irreducible representations of $G$ over $\mathbb{K}$, with $\chi_1$ the trivial representation. Let $\underline{\delta} = (\delta_1, \dots, \delta_r)$ be a multipartition of $d$ with $r$ components, and let $W_{\underline{\delta}}$ denote the corresponding irreducible representation of $G \wr S_d$.  Then, 
    
\begin{align*}
     \Ind_{G \wr S_d  \times G \wr S_{n-d}}^ {G \wr S_n} W_{\underline{\delta}} \boxtimes \text{trivial} \quad &\cong \quad \bigoplus_{\substack{\text{partitions $\lambda_1$ of }\\ |\delta_1|+(n-d)}}  C_{\delta_1, (n-d)}^{\lambda_1} W_{(\lambda_1, \delta_2, \delta_3, \dots, \delta_r)} \\
      &\cong \quad \bigoplus_{\substack{\text{Young diagrams $\lambda_1$ obtained} \\ \text{from $\delta_1$ by adding $(n-d)$} \\ \text{boxes in distinct columns}}} W_{(\lambda_1, \delta_2, \delta_3, \dots, \delta_r)}. 
\end{align*}
 \end{corollary}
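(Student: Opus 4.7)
The plan is to deduce this corollary directly from the preceding branching theorem by substituting an appropriate trivial representation and simplifying the Littlewood--Richardson coefficients. The only genuine content beyond bookkeeping is identifying which multipartition corresponds to the trivial representation of $G \wr S_{n-d}$; once that is pinned down, the rest follows by standard manipulations.

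First I would verify that, under the classification in Corollary \ref{IrrepsGwreathSn}, the trivial $G \wr S_{n-d}$-representation corresponds to the multipartition $\underline{\rho} = ((n-d), \varnothing, \varnothing, \dots, \varnothing)$, with the single-row partition $(n-d)$ assigned to the trivial character $\chi_1$. To check this, I would trace through the construction of $W_{\underline{\rho}}$ from Theorem \ref{theorem irreps}: the underlying $G^*$-representation $D^*$ is an $(n-d)$-fold external tensor product of copies of $\chi_1$, which is the trivial $G^{n-d}$-representation; the type $\mu$ is $(n-d, 0, \dots, 0)$, so the inertia subgroup $G \wr (H_{D^*})$ is all of $G \wr S_{n-d}$; the extended representation $\widetilde{D}^*$ remains trivial; and the irreducible $S_{n-d}$-representation $D'$ corresponding to the partition $(n-d)$ is the trivial representation. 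Hence $W_{\underline{\rho}} \cong \widetilde{D}^* \otimes \widetilde{D}'$ is trivial on $G \wr S_{n-d}$.

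Next I substitute $\underline{\rho} = ((n-d), \varnothing, \dots, \varnothing)$ into the general branching formula. The summation is over multipartitions $\underline{\lambda} = (\lambda_1, \dots, \lambda_r)$ of $n$ with $|\lambda_i| = |\rho_i| + |\delta_i|$. For each $i \geq 2$, we have $\rho_i = \varnothing$, so $|\lambda_i| = |\delta_i|$ and the Littlewood--Richardson coefficient $C_{\varnothing, \delta_i}^{\lambda_i}$ is $1$ if $\lambda_i = \delta_i$ and $0$ otherwise; these factors collapse the sum to multipartitions of the form $(\lambda_1, \delta_2, \dots, \delta_r)$. For $i = 1$, I am left with $C_{(n-d), \delta_1}^{\lambda_1}$, which is the classical Littlewood--Richardson coefficient indexed by the single-row partition $(n-d)$; the classical Pieri rule (a standard special case) says this coefficient equals $1$ if $\lambda_1$ is obtained from $\delta_1$ by adding $(n-d)$ boxes no two in the same column, and $0$ otherwise.

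Combining these observations yields both lines of the claimed isomorphism: the first is the direct specialization of the branching theorem with the collapsing of the $i \geq 2$ factors, and the second is the translation of the remaining $C_{(n-d),\delta_1}^{\lambda_1}$ via Pieri. The main obstacle is really just the first step, the identification of the trivial representation with the stated multipartition; after that, no nontrivial combinatorics is required beyond citing Pieri's rule.
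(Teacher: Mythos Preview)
Your proposal is correct and follows exactly the approach the paper takes: identify the trivial $G \wr S_{n-d}$-representation with the multipartition $((n-d),\varnothing,\dots,\varnothing)$, substitute into the general Littlewood--Richardson formula, collapse the $i\geq 2$ factors, and apply classical Pieri for the remaining coefficient. In fact you supply considerably more detail than the paper, which simply asserts the identification of the trivial representation and states the corollary without further argument.
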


% \subsection{Branching Rules} \flag{incomplete} 

% Throughout this section, we will let $G=S_\lambda$ for some $\lambda \in \Z_{\geq 0}$ and $H=S_n$ and denote $W_n= G \wr S_n$. 
% \begin{theorem}
% \end{theorem}

% For the purpose of this paper, we are interested in applying the branching rules on the irreducible representation 
% $$\Ind_{W_i \times W_{n-i}}^{W_n} \Phi_{\Lambda} \boxtimes \Phi_{\Delta}$$
% where $\Phi_{\Delta}$ is trivial. We will see that 
% \begin{notation}
%     For our purpose, when $G$ is $S_d$ and $H$ is $S_n$ we will denote a partition of $d$ to be $\lambda_i$ while we denote $\mu_j$ to be a partition of n. We will also use $V_{\lambda_i}$ and $V_{\mu_j}$ to denote irreducible representations of $S_d$ and $S_n$.
% \end{notation}

% \begin{corollary}
%     Suppose that $G$ is $S_d$ and $H$ is $S_n$. each irreducible representation $D$ is of the form
% \end{corollary}
 
% \begin{theorem}
%     Suppose that D is an irreducible representation indexed by the se
% \end{theorem}

\section{The category $\fig$ and $\fig$-modules} \label{SectionCategoriesFIG}

The field of \emph{representation stability} was developed by  Church, Ellenberg, Farb, and Nagpal \cite{ChurchFarb, CEF, CEFN, CE-FIhomology}, Putman, Sam, and Snowden \cite{Snowden-Segre, PutmanCentralStability,  SS-StabilityPatterns, SS-Grobner, PutmanSam-VIC}, Gan and Li \cite{GL, GL-Linear},   and many others. See also, for example, \cite{SS-IntroTCA, Farb-Survey, JennyNotes, Sam-notes, Snowden-MSRI, Sam-Survey, JRW-Survey} for surveys and notes. 

Church--Ellenberg--Farb developed the theory of $\FI$-modules as a formal categorical framework for studying sequences of representations of the symmetric groups. The foundations of this theory are developed in their paper \cite{CEF}.  In this section, we describe a generalization of this framework to sequences of representations of wreath products $G \wr S_n$, for a fixed finite group $G$. This generalization has been developed by Sam--Snowden \cite{SamSnowden}, Gan--Li \cite{GL}, Ramos \cite{Ramos}, Kupers--Miller \cite{KupersMiller}, Casto \cite{casto},  and others. The special case of $G=S_2$ was studied in earlier work \cite{ChurchFarb, Wilson-PureStringMotion, Wilson-WeylI, Wilson-WeylII}, in part because $S_2 \wr S_n$ is isomorphic to the Weyl group of type $B_n$/$C_n$. 

In the case that $G$ is the trivial group, then $\fig$ is canonically isomorphic to the category $\FI$, and the results here specialize to the results of Church--Ellenberg--Farb. 

In Sections \ref{SectionCategoriesFIG}, \ref{SectionRepStability}, \ref{figsharp}, and \ref{SectionCharacterPolynomials} we review the theory of representation stability for wreath products, the tools we will use to prove it, and some combinatorial consequences for the structure of the representations and their characters. We do not claim originality for this theory---credit is due to Sam--Snowden \cite{SamSnowden}, Gan--Li \cite{GL}, Ramos \cite{Ramos}, and  Casto \cite{casto}---and many results closely parallel those of Church--Ellenberg--Farb \cite{CEF} for $\FI$-modules.  We do review the material in detail, and establish versions of some results that are implicit in the literature but not readily citable (such as Propositions \ref{fig criterion} and \ref{characterization finitely generated fig}) that we hope may also assist future authors in applying the $\fig$-module machinery. We use a novel combinatorial characterization of the categories  $\fig$ and $\fig\sh$, which we hope the reader may find conceptually or computationally useful.

\subsection{The category $\fig$} 

\begin{definition} \label{DefnFIG}  Let $G$ be a finite group. Let $X$ denote the underlying set of $G$, viewed as a faithful, transitive $G$-set with respect to the action of $G$ by right multiplication. We define a category $\fig$ as follows. The objects are products $A \times X$ of a finite set $A$ with the set $X$. The morphisms are all injective maps of the form  $f:A \times X \to B \times X$ such that if $f(a, 1_G) = (b, g)$, then $f(a, g') = (b, gg')$ for all $a \in A$ and $g' \in G$.  
\end{definition}  

We may view the object $A \times X$ as the disjoint union of $|A|$ copies of $X$, each labelled by an element of $A$.  A morphism $A \times X \to B \times X$, then, is an injective map $\phi: A \to B$ on label sets, and on each copy $\{a\} \times X$ of $X$ a $G$-equivariant map to $\{\phi(a)\} \times X$. 

We can therefore specify the data of an $\fig$ morphism by an injective map $\phi: A \to B$ and a map $\alpha: A \to G$ that determines the values of the points $A \times \{1_G\}$. Then the pair $(\alpha, \phi)$ corresponds to the injective map
\begin{align*}
    A \times X & \longrightarrow B \times X \\ 
    (a, g) & \longmapsto (\phi(a),  \alpha(a)g). 
\end{align*}
This observation implies the following proposition, which reconciles Definition \ref{DefnFIG} with the definition of $\fig$ used by Sam--Snowden \cite[Section 1.1]{SamSnowden}, Gan--Li \cite[Definition 1.1]{GL}, Casto \cite{casto}, and Ramos \cite[Section 1]{Ramos}.
\begin{prop}
Let $G$ be a finite group. The objects of $\fig$ are indexed by finite sets $A$, and the morphisms from $A$ to $B$ correspond precisely to the set of pairs $(\alpha, \phi)$ consisting of an injective map $\phi: A \to B$ and an arbitrary map $\alpha: A \to G$, subject to the composition rule
    $$(\alpha, \phi)(\alpha',\phi')=( (\alpha 
    \circ \phi') \cdot \alpha', \phi \circ \phi')$$
    where $\cdot$ denotes the pointwise product with respect to the group operation on $G$.
\end{prop}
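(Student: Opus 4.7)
The plan is to exhibit an explicit bijection between the hom-set $\Hom_{\fig}(A \times X, B \times X)$ of Definition \ref{DefnFIG} and the set of pairs $(\alpha,\phi)$ with $\alpha : A \to G$ arbitrary and $\phi : A \to B$ injective, then verify the asserted composition rule by direct calculation. This is essentially a bookkeeping argument; the content of the proposition is that a morphism $f$ is completely determined by its values on $A \times \{1_G\}$, thanks to the $G$-equivariance built into Definition \ref{DefnFIG}.

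First I would define the forward map. Given a morphism $f : A \times X \to B \times X$ in $\fig$, for each $a \in A$ write $f(a, 1_G) = (\phi(a), \alpha(a))$, thereby defining $\phi : A \to B$ and $\alpha : A \to G$. The defining condition on $\fig$-morphisms says precisely that $f(a, g') = (\phi(a), \alpha(a) g')$ for every $g' \in X$, so $f$ is recovered from the pair $(\alpha, \phi)$. To show $\phi$ must be injective, I would argue contrapositively: if $\phi(a) = \phi(a')$ for some $a \neq a'$, then setting $g = 1_G$ and $g' = \alpha(a)^{-1} \alpha(a)$... more cleanly, the fiber of $f$ over $\{\phi(a)\} \times X$ contains both $\{a\} \times X$ and $\{a'\} \times X$, each parametrizing all of $X$ under $f$, so $f$ cannot be injective on these two copies.

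Next I would define the reverse map. Given any pair $(\alpha, \phi)$ with $\phi : A \to B$ injective and $\alpha : A \to G$ arbitrary, set $f(a, g) := (\phi(a), \alpha(a) g)$. Injectivity follows because $f(a, g) = f(a', g')$ forces $\phi(a) = \phi(a')$, hence $a = a'$ by injectivity of $\phi$, and then $\alpha(a) g = \alpha(a) g'$ gives $g = g'$. The equivariance property of Definition \ref{DefnFIG} is immediate: $f(a, 1_G) = (\phi(a), \alpha(a))$ and $f(a, g') = (\phi(a), \alpha(a) g')$. The two constructions are visibly mutually inverse.

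Finally, to verify the composition rule, I would compose two such morphisms and read off the result. If $(\alpha', \phi')$ corresponds to $f' : A \times X \to B \times X$ and $(\alpha, \phi)$ corresponds to $f : B \times X \to C \times X$, then
\[
(f \circ f')(a, g) = f\bigl(\phi'(a), \alpha'(a) g\bigr) = \bigl(\phi(\phi'(a)),\, \alpha(\phi'(a)) \cdot \alpha'(a) \cdot g\bigr),
\]
which corresponds to the pair $\bigl((\alpha \circ \phi') \cdot \alpha',\; \phi \circ \phi'\bigr)$, matching the stated formula. The only subtle point — the one place where care is needed — is the order of factors in $(\alpha \circ \phi') \cdot \alpha'$: the label map $\alpha$ must be pulled back along $\phi'$ before multiplying pointwise with $\alpha'$, because $\alpha$ is defined on $B$ while $\alpha'$ is defined on $A$. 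This is the main point where one could easily swap factors and get the wrong composition law, so I would flag it as the sole source of genuine (if minor) care in the argument.
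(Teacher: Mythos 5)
Your proposal is correct and follows essentially the same route as the paper, which justifies the proposition only by the preceding observation that a morphism is determined by its values on $A \times \{1_G\}$; you simply spell out the bijection, the injectivity of $\phi$, and the composition-rule computation in full detail. The computation $(f \circ f')(a,g) = (\phi(\phi'(a)), \alpha(\phi'(a))\alpha'(a)g)$ correctly recovers the stated rule $((\alpha\circ\phi')\cdot\alpha', \phi\circ\phi')$, including the order of the factors.
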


For the rest of the paper, we take the endomorphisms of the finite set $[n]\times X$ as the definition of the wreath product  $G \wr S_n$. We observe that it is isomorphic to the group described in Definition \ref{DefWreathProduct}.  %{\bf \color{red} Not sure if we want to reconcile the notation ... there is an inconsistency with left vs right actions etc.} 

\begin{remark} \label{RemarkDefnFIG}
    Fix $k \in \N$. In the case that $G$ is the symmetric group $S_k$, we may replace $X$ with the faithful, transitive $S_k$-set $[k]=\{1, 2, \ldots, k\}$ equipped with the standard action of $S_k$ by permutations. We obtain the same category $\FI_{S_k}$ as in Definition \ref{DefnFIG}.
\end{remark}

\begin{remark}
    To study functors with domain $\fig$, it suffices to consider the objects of the form $[n]\times X$ for natural numbers $n \geq 0$. Thus we sometimes restrict to the full subcategory of $\fig$ (equivalent to $\fig$) with objects $[n]\times X$ indexed by the natural numbers. By abuse of notation, some authors also denote this subcategory by $\fig$. 
\end{remark}

%\begin{definition}[\cite{GL}, Definition 1.1]
  %  Let $G$ be a finite group, and $P$ and $S$ finite sets. Define a $G$-map $((g_i),a) : P \longrightarrow S$ to be a pair $a : P \longrightarrow S$ and $(g_i) \in G^P$. If $((h_j),b) : S \longrightarrow T$ is another $G$-map, their composition is $((h_{a(i)}\cdot g_i), b \comp a)$. Let $\FI_G$ be the category with objects finite sets $[n]=\{1,\dots,n\}$ and morphisms $G$-maps with the function $a$ injective.
%\end{definition}

%\begin{remark}
% By definition of wreath product
%    $$W_n = G \wr S_n=G^n \rtimes S_n$$
%    one can verify that the automorphism group of $[n]$ in $\fig$ category is isomorphic to $W_n$. When there is no confusion, we identify the automorphism group of $[n]$ with $W_n$.
%\end{remark}

%Similar to the case in $\FI$ category, the morphisms in $\fig$ category can also be decomposed into the composition of one specific morphism called the natural inclusion and one element in $W_n$.

\begin{notation}
    For brevity, for $n \in \Z_{\geq 0}$, we will sometimes write $\nX$ to denote the set $[n] \times X$. We similarly write $\dX$, $\kX$, $\mX$, \ldots, and $\mathbf{0}_X$,  $\mathbf{1}_X$, \dots. 
\end{notation}

\begin{notation} \label{NotationInclusion}
   Given $0 \leq m \leq n$,  let $\iota_{m,n}: \mX \to \nX$ denote the natural inclusion map 
$$\iota_{m,n} =  (\iota \times id_X): [m] \times X \longrightarrow [n] \times X$$
for $\iota: [m] \hookrightarrow [n]$ the inclusion.  Notably, we can factor $\iota_{m,n} = \iota_{n-1, n} \circ \dots \circ \iota_{m, m+1}$. 
\end{notation}

As in the case of $\FI$, it is convenient to observe that we can factor morphisms of $\fig$ as a composite of a natural inclusion map and an automorphism. 

\begin{prop} \label{Map Factors Lemma in fig context}
  Fix $0 \leq m \leq n$. Every morphism in $f \in \Hom_{\fig}\left(\mX, \nX\right)$ factors as a composite
$$ \mX \xrightarrow{\iota_{m,n}} \nX \overset{\omega}{\longrightarrow} \nX $$ 
for some (non-canonical) $\omega \in  \End_{\fig}(\nX) = G \wr S_n $.  

  %  Let $f=((g_i), a)$ be a morphism from $[m]$ to $[n]$ in $\fig$. Then, $f=\sigma \comp i_{m,n}$, where $\sigma \in W_n$ and $i_{m,n}$ is the $G$-map $((e_i), \iota_{m,n})$ such that $(e_i)$ is the identity element in $G^m$ and $\iota_{m,n}$ is the natural inclusion from $[m]$ to $[n]$.
\end{prop}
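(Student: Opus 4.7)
The plan is to read off the required factorization directly from the combinatorial description of $\fig$-morphisms as pairs $(\alpha, \phi)$. Recall that a morphism $\mX \to \nX$ is given by such a pair, with $\phi \colon [m] \to [n]$ an injection of label sets and $\alpha \colon [m] \to G$ an arbitrary function, and that the inclusion $\iota_{m,n}$ corresponds to the pair $(e, \iota)$, where $\iota \colon [m] \hookrightarrow [n]$ is the standard inclusion and $e$ is the constant function at $1_G$. An endomorphism $\omega \in G \wr S_n = \End_{\fig}(\nX)$ is similarly a pair $(\beta, \pi)$ with $\pi \in S_n$ and $\beta \colon [n] \to G$.

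The first step is to apply the composition rule from the definition of $\fig$ to compute
\[
(\beta, \pi) \comp (e, \iota) \;=\; \bigl((\beta \comp \iota) \cdot e,\; \pi \comp \iota \bigr) \;=\; \bigl(\beta|_{[m]},\; \pi|_{[m]}\bigr),
\]
using that $e$ is the identity for the pointwise group operation on functions $[m] \to G$. Setting this composite equal to the given morphism $(\alpha, \phi)$ reduces the factorization to the two requirements $\pi|_{[m]} = \phi$ and $\beta|_{[m]} = \alpha$. Both can be satisfied: since $\phi$ is injective, the complements $[n] \setminus [m]$ and $[n] \setminus \phi([m])$ both have cardinality $n-m$, so any bijection between them extends $\phi$ to a permutation $\pi \in S_n$. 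The map $\alpha$ admits an extension $\beta \colon [n] \to G$ with no constraints at all---for instance, setting $\beta(a) = 1_G$ for every $a \in [n]\setminus[m]$ works.

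There is no real obstacle here beyond bookkeeping with the composition convention; the proposition is essentially the observation that an injection between finite sets extends to a bijection of an ambient set, together with the trivial extension of an arbitrary function. The non-canonicity of $\omega$ reflects precisely the two independent choices one makes in extending $\phi$ to $\pi$ and $\alpha$ to $\beta$.
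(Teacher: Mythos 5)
Your proof is correct, and it is exactly the routine verification the paper leaves implicit (the paper states Proposition \ref{Map Factors Lemma in fig context} without proof): using the description of morphisms as pairs $(\alpha,\phi)$ and the stated composition rule, the factorization reduces to extending the injection $\phi$ to a permutation of $[n]$ and extending $\alpha$ arbitrarily to $[n]$. Nothing is missing.
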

%\begin{proof}
 %   From the classical $\FI$-module theory, we know that if $a$ is an injection from $[m]$ to $[n]$, then $a=\tau \comp \iota_{m,n}$ for some $\tau \in S_n$ and $\iota_{m,n}$ the natural inclusion. Let $(g_i) \in G^m$ and $(e_j) \in G^{n-m}$, then $f=((g_i), a)=((e_ig_{\iota_{m,n}(i)}), \tau \comp \iota_{m,n})=((g_i,e_j), \tau) \comp ((e_i), \iota_{m,n})=\sigma \comp i_{m,n}$.
%\end{proof}

In light of Proposition \ref{Map Factors Lemma in fig context}, as in the theory of $\FI$-modules, we will see that the key to understanding the combinatorics of the $\fig$ morphisms is to calculate the stabilizer of the morphism $\iota_{m,n}$ under the action of $\End_{\fig}\left(\nX \right) = G \wr S_n$ by postcomposition. 

\begin{prop} \label{PropDescribeStabilizer} Let $0 \leq m\leq n$. Consider the action of $\End_{\fig}\left(\nX \right) = G \wr S_n$ by postcomposition on the set of morphisms $\Hom_{\fig}(\mX , \nX)$. Then the stabilizer stab($\iota_{m,n}$) of $\iota_{m,n}$ is precisely the set of morphisms
$$ \nX  \longrightarrow \nX$$ that restrict to the identity on $(\{1, 2, \ldots, m\} \times X) \subseteq  ([n] \times X) = \nX$. By considering the restriction of these maps to $(\{m+1, \dots, n\} \times X) \subseteq ([n] \times X)=\nX$, we can canonically identify the stabilizer
$$ \mathrm{stab}(\iota_{m,n}) \cong \End_{\fig}(\{m+1, \dots, n\} \times X),$$
a group isomorphic to the wreath product $G \wr S_{n-m}$. 
\end{prop}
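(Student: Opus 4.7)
The plan is to prove both claims of Proposition \ref{PropDescribeStabilizer} by direct unpacking of the definition of $\fig$-morphisms and the action by postcomposition; nothing deep is required, and the only real task is to marshal the bookkeeping carefully.

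First I would verify the characterization of $\mathrm{stab}(\iota_{m,n})$. An element $\omega \in \End_{\fig}(\nX) = G \wr S_n$ lies in the stabilizer precisely when $\omega \circ \iota_{m,n} = \iota_{m,n}$ as maps $\mX \to \nX$. Since $\iota_{m,n}$ is the inclusion $(a,g) \mapsto (a,g)$ for $a \in [m]$, the condition $\omega(\iota_{m,n}(a,g)) = \iota_{m,n}(a,g)$ for all $(a,g) \in \mX$ says exactly that $\omega$ fixes $[m] \times X$ pointwise. For the converse, any $\omega \in \End_{\fig}(\nX)$ that fixes $[m] \times X$ pointwise automatically satisfies $\omega \circ \iota_{m,n} = \iota_{m,n}$, giving both inclusions.

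Next I would establish the identification with $\End_{\fig}(\{m+1,\dots,n\}\times X)$. Given $\omega$ in $\mathrm{stab}(\iota_{m,n})$, since $\omega$ is a bijection of $\nX = ([m] \times X) \sqcup (\{m+1,\dots,n\} \times X)$ fixing the first summand pointwise, it must restrict to a bijection of $\{m+1,\dots,n\} \times X$. The restriction is still $G$-equivariant on each fibre in the sense required by Definition \ref{DefnFIG}, hence is an element of $\End_{\fig}(\{m+1,\dots,n\}\times X)$. Conversely, any endomorphism of $\{m+1,\dots,n\}\times X$ extends uniquely to an endomorphism of $\nX$ by declaring it to act as the identity on $[m] \times X$, and this extension lies in $\mathrm{stab}(\iota_{m,n})$. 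These two assignments are mutually inverse.

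Finally I would check that this bijection is a group homomorphism: composition of $\fig$-morphisms is determined pointwise, so restricting a composite of stabilizer elements to $\{m+1,\dots,n\}\times X$ agrees with the composite of the restrictions. Since $\{m+1,\dots,n\}\times X$ has the same structure as $[n-m]\times X$ (relabelling indices), $\End_{\fig}(\{m+1,\dots,n\}\times X) \cong G \wr S_{n-m}$ by the identification of $\fig$-endomorphisms with wreath products given just before the proposition. The main ``obstacle'' is merely care with the two conventions for an $\fig$-morphism (the set-theoretic description in Definition \ref{DefnFIG} versus the $(\alpha, \phi)$-pair description), but once one fixes a convention the argument is essentially a one-line observation at each step.
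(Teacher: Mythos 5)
Your proof is correct and is exactly the definition-unpacking argument the paper intends (the proposition is stated without proof precisely because it follows this directly from Definition \ref{DefnFIG} and the injectivity of endomorphisms of a finite set). No issues.
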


\subsection{$\fig$-modules}

\begin{definition}
    An \emph{$\fig$-module} $V$ over a commutative ring $R$ is a functor $V$ from $\fig$ to the category $R$-Mod of $R$-modules. Given a finite set $A$, for simplicity we write $V_A$ for $V(A \times X)$. We let $V_n$ denote $V(\nX)$. 
\end{definition}

% \begin{remark}
%     Analogous to the theory of $\FI$-module, an $\fig$ module is in particular a sequence of $W_n$ representations. 
% \end{remark}

\begin{definition}
    Let $R$ be a commutative ring and let $V$ and $W$ be $\fig$-modules over $R$. We define a map
    $$F:V\longrightarrow W$$
    to be a \emph{map of $\fig$-modules} if $F$ is a natural transformation of functors $V$ and $W$. More concretely, $F$ is a family of $R$-linear maps
    $$F_A:V_A\longrightarrow W_A$$
    for each finite set $A$ such that the following diagram commutes for all $\fig$ morphisms $f: A \times X \to B \times X$, 
    \[\begin{tikzcd}
	{V_A} && {W_A} \\
	\\
	{V_B} && {W_B}
	\arrow["{F_A}",from=1-1, to=1-3]
	\arrow["{f_*}", from=1-1, to=3-1]
	\arrow["{f_*}", from=1-3, to=3-3]
	\arrow["{F_B}", from=3-1, to=3-3]
\end{tikzcd}\]
where $f_*$ denotes the map induced by $f$. We say that $U$ and $V$ are isomorphic as $\fig$-modules if there is a natural isomorphism $F:U\to V$ of $\fig$-modules, that is, a natural transformation $F$ such that $F_A$ is an $R$-linear isomorphism for every finite set $A$. 
\end{definition}

\begin{definition}
    Let $V$ be an $\fig$-module. We say that $U$ is an \emph{$\fig$-submodule} of $V$ if there is a map of $\fig$-modules $F: U\to V$ such that $F_A$ is the inclusion of an $R$-submodule of $V_A$ for each finite set $A$. 
    %we say that $V$ is a sub $\fig$-module of $V$ if for each $n$ we have $U_n\leq V_n$ a sub $W_n$ representation. Also, for each $f\in \Hom_{\fig}(m,n)$ we have $f(U_m)\subset U_n$
\end{definition}

\begin{definition}
    Suppose that $F:V \to U$ is a map of $\fig$-modules. Then the family of kernels ker$(F_A) \subseteq V_A$ for each finite set $A$ form an $\fig$-submodule of $V$ which we denote by ker$(F)$ and call the \emph{kernel} of $F$. The family of images im$(F_A) \subseteq U_A$ form an $\fig$-submodule of $U$, which we denote by im$(F)$ and call the \emph{image} of $F$.

Sam--Snowden \cite[Theorem 1.2.4]{SamSnowden} showed that, in a certain strong sense, the theory of $\fig$-modules reduces to the theory of $\FI$-modules. 
    
 %   Define the $\fig$-module ker($F$) by setting the $n$-th component to be ker($F_n$) and the $\fig$ morphisms actions to be the restriction of the $\fig$ morphisms actions on $V_n$.  Ker($F$) is a sub $\fig$-module of $V$. Define im$(F)$ by  setting the $n$-th component to be im($F_n$) and the $\fig$ morphisms actions to be the restriction of the $\fig$ morphisms actions on $U_n$. Im$(F)$ is a sub-$\fig$ module of $U$.
\end{definition}

%From now on, we assume that all of our $\fig$-modules are over an arbitrary commutative ring $R$ unless stated otherwise. 

%\begin{remark}
%    The category of $\fig$-modules is an abelian category.
%\end{remark}

\subsection{An $\fig$-module criterion} 

\begin{definition} \label{DefnConsistentSequence} Let $G$ be a finite group, and $R$ a commutative ring.
   For each integer $n \geq 0$, let $V_n$ be a $G \wr S_n$-representation over $R$, and let $\varphi_n: V_n \to V_{n+1}$ be an $R$-linear map.  View $G \wr S_n$ as the subgroup of $G \wr S_{n+1}$ of symmetries of $([n] \times X) \subseteq ([n+1] \times X)$. We call the sequence 
   $$ V_0 \xrightarrow{\varphi_0} V_1 \xrightarrow{\varphi_1}  V_2 \xrightarrow{\varphi_2} \dots \xrightarrow{\varphi_{n-1}} V_n \xrightarrow{\varphi_n}  V_{n+1} \xrightarrow{\varphi_{n+1}} \dots $$ a \emph{consistent sequence} of $G\wr S_n$-representations if the maps $\varphi_n$ are $G \wr S_n$-equivariant with respect to the action of $G \wr S_n$ on $V_n$ and the restriction of the action to   $G \wr S_n \subseteq G \wr S_{n+1}$ on $V_{n+1}$. 
\end{definition}

An $\fig$-module $V$ defines in particular a consistent sequence of $G\wr S_n$-representations $V_n$  with equivariant maps $(\iota_{n, n+1})_*: V_n \to V_{n+1}$. However, not every consistent sequence of  $G\wr S_n$-representations arises in this way. The following results show when a sequence of $G\wr S_n$-representations can be given the structure of a functor from (the full subcategory on objects $\nX$ of) the category $\fig$.   This result generalizes Church--Ellenberg--Farb \cite[Remark 3.31]{CEF}.

\begin{lemma} \label{fig criterion}\label{promoting sequences criterion} Let $G$ be a finite group, and $R$ a commutative ring. Suppose that     $$ V_0 \xrightarrow{\varphi_0} V_1 \xrightarrow{\varphi_1}  V_2 \xrightarrow{\varphi_2} \dots \xrightarrow{\varphi_{n-1}} V_n \xrightarrow{\varphi_n}  V_{n+1} \xrightarrow{\varphi_{n+1}} \dots $$ is a consistent sequence of $G \wr S_n$-representations. Let $\varphi_{m,n}:V_m \to V_n$ denote the composite  $\varphi_{n-1} \circ \dots \circ \varphi_m$.

Let $\iota_{m,n} \in \Hom_{\fig}(\mX, \nX)$ be the natural inclusion map as defined in Notation \ref{NotationInclusion}. Let stab($\iota_{m,n}$) denote
$$ \mathrm{stab}(\iota_{m,n}) = \{ \omega \in \End_{\fig}(\nX)=G \wr S_n \; | \; \omega \circ \iota_{m,n} = \iota_{m,n} \},$$
as in Proposition \ref{PropDescribeStabilizer}. 
Then there exists a functor $V: \fig \to R$-Mod satisfying $V(\nX) = V_n$ and $(\iota_{m,n})_* = \varphi_{m,n}$ if and only if the following Condition ($*$) is satisfied: 
   \begin{equation} \tag{$*$}
        \text{for all $m<n$,\quad $\omega \cdot v = v$,\quad for all $\omega \in \mathrm{stab}(\iota_{m,n})$ and $v\in \varphi_{m,n}(V_m)$.} 
    \end{equation}
\end{lemma}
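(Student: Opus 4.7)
The forward direction is essentially immediate: if $V \colon \fig \to R\text{-Mod}$ is a functor with $(\iota_{m,n})_* = \varphi_{m,n}$, then for any $\omega \in \mathrm{stab}(\iota_{m,n})$ functoriality gives
\[
\omega \cdot \varphi_{m,n}(v) \;=\; \omega_*\bigl((\iota_{m,n})_*(v)\bigr) \;=\; (\omega\circ \iota_{m,n})_*(v) \;=\; (\iota_{m,n})_*(v) \;=\; \varphi_{m,n}(v),
\]
so $\omega$ acts as the identity on $\varphi_{m,n}(V_m)$, which is exactly condition $(*)$.

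For the converse, assuming $(*)$, I would construct the functor on morphisms using Proposition \ref{Map Factors Lemma in fig context}, which guarantees that every $f \in \Hom_{\fig}(\mX, \nX)$ admits a factorization $f = \omega \circ \iota_{m,n}$ with $\omega \in \End_{\fig}(\nX) = G \wr S_n$. The definition $f_*(v) := \omega \cdot \varphi_{m,n}(v)$ for $v \in V_m$ is then forced, and the first step is to verify that it is independent of the choice of $\omega$. Any two such factorizations $\omega \iota_{m,n} = \omega' \iota_{m,n}$ differ by $\omega^{-1}\omega' \in \mathrm{stab}(\iota_{m,n})$, and hypothesis $(*)$ applied to $\varphi_{m,n}(v) \in \varphi_{m,n}(V_m)$ yields $\omega \cdot \varphi_{m,n}(v) = \omega' \cdot \varphi_{m,n}(v)$. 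This well-definedness step is precisely where condition $(*)$ is needed.

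The remaining task is functoriality. Given $g = \sigma \circ \iota_{n,k}$ with $\sigma \in G \wr S_k$, I would factor the composite using the identity $\iota_{n,k} \circ \omega = \widetilde{\omega} \circ \iota_{n,k}$, where $\widetilde{\omega} \in G \wr S_k$ denotes the image of $\omega$ under the inclusion $G \wr S_n \hookrightarrow G \wr S_k$ that is trivial on the additional copies of $X$. This gives $g \circ f = (\sigma\widetilde{\omega}) \circ \iota_{m,k}$, so $(g\circ f)_*(v) = \sigma \widetilde{\omega} \cdot \varphi_{m,k}(v)$. Meanwhile, $g_*(f_*(v)) = \sigma \cdot \varphi_{n,k}(\omega \cdot \varphi_{m,n}(v))$, and the consistency of the original sequence supplies the intertwining relation $\varphi_{n,k}(\omega \cdot u) = \widetilde{\omega} \cdot \varphi_{n,k}(u)$ for all $u \in V_n$; combined with $\varphi_{n,k} \circ \varphi_{m,n} = \varphi_{m,k}$ this rewrites $g_*(f_*(v))$ as $\sigma \widetilde{\omega} \cdot \varphi_{m,k}(v)$, matching $(g\circ f)_*(v)$. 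Preservation of identities is automatic since $\iota_{n,n}$ and $\varphi_{n,n}$ are both identities. The main obstacle is purely bookkeeping: keeping clear the distinction between $\omega$ acting on $V_n$ and its extension $\widetilde{\omega}$ acting on $V_k$, with consistency of the sequence providing exactly the compatibility that makes functoriality fall out.
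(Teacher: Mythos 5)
Your proposal is correct and follows essentially the same route as the paper: factor an arbitrary morphism as $\omega\circ\iota_{m,n}$ via Proposition \ref{Map Factors Lemma in fig context}, use Condition $(*)$ exactly to settle well-definedness of $f_*$, and verify functoriality by intertwining $\omega$ with its image $\widetilde{\omega}$ under $G\wr S_n\hookrightarrow G\wr S_k$ together with the equivariance built into the consistent sequence. The only difference is cosmetic notation, and your forward direction is spelled out slightly more explicitly than the paper's one-line appeal to functoriality.
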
 

\begin{proof}
 Given an $\fig$-module $V: \fig \to R$-Mod, the sequence $(\iota_{n, n+1})_*: V_n \to V_{n+1}$ satisfies Condition ($*$)  by functoriality. Conversely, suppose that $\varphi_n: V_n \to V_{n+1}$ is a consistent sequence of $G \wr S_n$-representations that satisfies $(*)$. We will construct the functor $V$ and verify that it is functorial. 
 
 It suffices to construct $V$ on the full subcategory of $\fig$ of objects $\nX=[n] \times X$ for integers $n\geq 0$. Let $V(\nX)=V_n$, and let $(\iota_{m,n})_* = \varphi_{m,n}$.  Let $f \in \Hom_{\fig}(\mX, \nX)$.  By Proposition $\ref{Map Factors Lemma in fig context}$, we can factor $f$ (non-uniquely) as a composite $f=\sigma \circ \iota_{m,n}$, where $\sigma \in \End_{\fig}(\nX) = G \wr S_n$. Thus we define $f_*$ to be the function $\sigma \circ \varphi_{m,n}: V_m \to V_n$. 

We first check that $f_*$ is well-defined. Suppose that $f = \sigma \circ \iota_{m,n} = \tau \circ \iota_{m,n}$ in $\Hom_{\fig}(\mX, \nX)$, for $\sigma, \tau \in G\wr S_n$. Then $\tau^{-1}\sigma \in \mathrm{stab}(\iota_{m,n})$, so $\sigma \circ \varphi_{m,n} = \tau \circ \varphi_{m,n}$ by Condition $(*)$. 

We finally check that $V$ is functorial. Let $h \in \Hom_{\fig}(\kX, \mX)$ and $f \in \Hom_{\fig}(\mX, \nX)$. Factor $h = \rho \circ \iota_{k,m}$ and $f= \sigma \circ \iota_{m,n}$.  Observe that the map $\iota_{m,n}$ defines an inclusion of $G \wr S_m$ into $G \wr S_n$ as the symmetry group of $\mX = ([m]\times X) \subseteq ([n]\times X) \subseteq \nX$. For $\rho \in G \wr S_m$, write $\rho' \in G \wr S_n$ for its image; by definition $\rho'$ satisfies $\iota_{m,n}  \circ \rho = \rho'  \circ \iota_{m,n}$ as $\fig$ morphisms. Then, 
\begin{align*}
    f_* \circ h_* & = (\sigma \circ \iota_{m,n})_* \circ (\rho \circ \iota_{k,m})_* \\ 
    & = (\sigma \circ \varphi_{m,n}) \circ (\rho \circ \varphi_{k,m}) \\ 
    & = \sigma \circ \rho'\circ \varphi_{m,n} \circ \varphi_{k,m} \qquad \qquad \text{ (by the equivariance assumption on the maps $\varphi_i$)} \\ 
    & = (\sigma \circ \rho'\circ \iota_{m,n} \circ \iota_{k,m})_* \\ 
    & = (\sigma \circ \iota_{m,n} \circ \rho \circ \iota_{k,m})_* \\
    & = (f \circ h)_*  \qquad  \qquad \qquad  \qquad \qquad \text{which concludes the proof.} \qedhere
\end{align*}
\end{proof}

\subsection{Generation of $\fig$-modules and the Noetherian property} 
In this section we recall the Noetherian property of $\fig$-modules over Noetherian rings. The following definitions appear in Casto \cite[Section 2]{casto} and Sam--Snowden \cite[Section 2.1]{SamSnowden}, and are direct analogues of the corresponding definitions for $\FI$-modules \cite[Section 2.3]{CEF}. 

\begin{definition}
    An $\fig$-module $V$ is $\emph{generated}$ by a set $S \subseteq \amalg_{n \geq 0} V_n$ if $V$ is itself the smallest $\fig$-submodule of $V$ containing $S$. 
\end{definition}

\begin{definition}
    An $\fig$-module $V$ is $\emph{finitely generated}$ if it is generated by a finite set. We say $V$ is $\emph{generated in degree}$ $ \leq d$ whenever $V$ is generated by $\amalg_{0 \leq n \leq d} V_n$. If $V$ is generated in degree $d < \infty$, then we say $V$ has $\emph{finite generation degree}$. 
\end{definition}

\begin{definition}
    A finitely generated $\fig$-module $V$ is $\emph{Noetherian}$ if every $\fig$-submodule of $V$ is finitely generated.
\end{definition}

Sam--Snowden \cite{SamSnowden} proved a Noetherianity result for $\fig$-modules for \emph{virtually polycyclic groups} $G$. It holds in particular when $G$ is finite. 

\begin{theorem}[{\cite[Corollary 1.2.2 and Theorem 1.2.3]{SamSnowden}}]\label{Noetherianity for FIG}
    Let $G$ be a finite group and let $R$ be a left-Noetherian ring. Then any finitely generated $\fig$-module $V$ over $R$ is Noetherian.
\end{theorem}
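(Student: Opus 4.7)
The plan is to reduce the Noetherianity of a finitely generated $\fig$-module $V$ to the Noetherianity of the representable $\fig$-modules $M(d) := R[\Hom_{\fig}(\dX, -)]$, and then to establish the latter by applying the general Gröbner category framework of Sam--Snowden \cite{SS-Grobner}.

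For the first reduction, since $V$ is finitely generated, there is a surjection of $\fig$-modules $\bigoplus_{i=1}^{s} M(d_i) \twoheadrightarrow V$, where each generator $v_i \in V_{d_i}$ is the image of the identity in $\Hom_{\fig}(\mathbf{d}_{i,X},\mathbf{d}_{i,X})$. Since the property of being Noetherian is closed under quotients and finite direct sums, it suffices to prove that each representable $M(d)$ is a Noetherian $\fig$-module. By the Yoneda-type interpretation, an $\fig$-submodule of $M(d)$ corresponds to an ideal in the hom-semigroup $\coprod_{n} \Hom_{\fig}(\dX, \nX)$ that is closed under postcomposition; showing such ideals are finitely generated is a combinatorial well-partial-order statement.

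For the main step, I would exploit the explicit combinatorial description of $\fig$-morphisms. By the discussion following Definition \ref{DefnFIG}, a morphism $\mX \to \nX$ is a pair $(\alpha, \phi)$ consisting of an injection $\phi : [m] \hookrightarrow [n]$ together with a labeling $\alpha: [m] \to G$. There is thus a faithful forgetful functor $\fig \to \FI$ whose fibers have size $|G|^m$, and each $\fig$-morphism may be viewed as an $\FI$-morphism decorated by a word in the finite alphabet $G$. The idea is to promote the Gröbner structure that is known to exist on $\FI$ (with linear orders on finite sets giving a natural term order on monomials in the hom-semigroup) to a Gröbner structure on $\fig$, using the finiteness of $G$ to keep the alphabet bounded.

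The main obstacle is verifying the well-partial-order property in a way that is compatible with the $\fig$ composition rule $(\alpha,\phi)(\alpha',\phi') = ((\alpha \circ \phi')\cdot \alpha', \phi \circ \phi')$, since this rule nontrivially entangles the $G$-labels with the underlying injection. The essential combinatorial input is a Higman-type lemma on subword orderings: sequences over the finite alphabet $G \times [d]$ (indexing the image of a morphism out of $\dX$, decorated by its $G$-label) are well-partially-ordered under the natural embedding-with-matching-labels relation. Once this input is established, the abstract machinery of \cite{SS-Grobner} promotes it to Noetherianity of every representable $M(d)$ over an arbitrary left-Noetherian ring $R$, and hence, by the reduction above, of every finitely generated $\fig$-module $V$.
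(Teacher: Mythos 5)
The paper offers no proof of this statement---it is quoted verbatim from Sam--Snowden---and your sketch is essentially the argument of the cited sources: reduce to the representable modules $M(d)$ (as in Proposition \ref{M(m)Facts} part (\ref{PropM(m)FG})), then establish Noetherianity of each $M(d)$ by exhibiting $\fig$ as a quasi-Gr\"obner category in the sense of \cite{SS-Grobner}, with Higman's lemma over a finite alphabet built from $G$ supplying the required well-partial-order, and the Gr\"obner/initial-module formalism handling arbitrary left-Noetherian coefficients. Your outline is correct as a sketch (the one imprecision is that the Gr\"obner structure lives on the ordered variant $\mathrm{OI}_G$ rather than on $\mathrm{FI}$ or $\fig$ themselves, whose nontrivial automorphisms obstruct an admissible order, so one must invoke quasi-Gr\"obner-ness via the forgetful functor $\mathrm{OI}_G \to \fig$), and it matches the intended proof.
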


\subsection{Representable and induced $\fig$-modules}

In this section, we define a class of ``induced'' modules with highly controlled combinatorial behaviour. 
These modules play an essential role in Section \ref{figsharp}. The theory developed here, and our notation, closely mirrors Church--Ellenberg--Farb \cite[Section 2.2]{CEF} in the case of $\FI$-modules. The definitions below for general $\fig$-modules appear in Casto \cite[Section 2]{casto}, and Ramos \cite[Definition 2.4]{Ramos}.

\begin{definition} \label{DefnM(d)} Fix $d \geq 0$ and a commutative ring $R$. We denote by $M(d)$ the \emph{representable} or \emph{free} $\fig$-module over $R$, defined to be
$$M(d)_n = R[\Hom_{\fig}(\dX, \nX)]$$
with the action of $\fig$ morphisms by post composition, 
\begin{align*} f_*:  R[\Hom_{\fig}(\dX, \mX)] & \longrightarrow  R[\Hom_{\fig}(\dX, \nX)] \\ 
h & \longmapsto f \circ h
\end{align*} 
for $f \in \Hom_{\fig}(\mX, \nX),  h \in \Hom_{\fig}(\dX, \mX)$. 
\end{definition}

 We observe  that, for each $n$, $M(d)_n$ admits an action of $G\wr S_d = \End_{\fig}(\dX)$ on the right by precomposition. With this observation we can make the following definition. 

\begin{definition} \label{DefnInduced}
    Let $R$ be a commutative ring. Suppose that $T$ is a $R[G\wr S_d]-$module. Define the \emph{induced $\fig$-module} $M(T)$ by
    $$M(T)_n=M(d)_n\otimes_{R[G\wr S_d]}T$$
  and the $\fig$ morphisms act on the left tensor factor $M(d)$. More generally, we call an $\fig$-module an \emph{induced $\fig$-module} if it is a direct sum of $\fig$-modules of the form $M(T)$. 
\end{definition}

\begin{remark}
Sam and Snowden \cite[Section 1.1]{SS-Grobner} refer to modules of the form $M(d)$ as \emph{principal projective} modules. Gadish \cite[Definition 3.1]{Gadish-FItype} calls the modules $M(T)$ \emph{induction $\fig$-modules}. Ramos \cite[Definition 2.4]{Ramos} refers to sums of modules of the form $M(T)$ as \emph{relatively projective} modules, and Gadish \cite[Section 3]{Gadish-FItype} calls all such modules \emph{free}. 
\end{remark}

We collect some facts about the $\fig$-modules $M(d)$ and $M(T)$. These are direct analogues of the corresponding results for $\FI$-modules (see, for example, Church--Ellenberg--Farb \cite[Section 2.2]{CEF}), and we omit the details of the proofs. Recall the definition of the external tensor product $\boxtimes$ from Notation \ref{ExternalInternalTensor}. 

\begin{prop} \label{M(m)Facts} Let $R$ be a commutative ring. Fix a natural number $d \geq 0$, and let $T$ be a $G \wr S_d$-representation over $R$. 
    \begin{enumerate}[(i)] 
    \item $M(d)_n=0$ if $n<d$.
    \item $M(d)_d \cong R[G \wr S_d]$, the regular representation of $G \wr S_d$. 
    \item By an orbit-stabilizer argument, $M(d)_n$ is (as a  $G \wr S_n$-representation) the permutation representation on the cosets $$\End_{\fig}(\nX) / \mathrm{stab}(\iota_{d,n}) \; \; \cong \; \;  (G\wr S_n) / (G \wr S_{n-d})$$ 
    by Proposition \ref{PropDescribeStabilizer}. Consequently, there are isomorphisms of $G \wr S_n$-representations
    $$M(d)_n \cong \Ind_{G \wr S_{n-d}}^{G\wr S_n} R$$ where $R$ has trivial $G \wr S_{n-d}$ action. 
   \item $M(d)$ is finitely generated in degree $\leq d$ as an $\fig$-module by the identity morphism $id$ in $\End_{\fig}(\dX) \subseteq M(d)_d$. 
   \item Given any $\fig$-module $V$ and element $v \in V_d$,  the assignment $id \mapsto v$ extends uniquely to a map of $\fig$-modules $M(d) \to V$, 
   \begin{align*}
   M(d)_n & \longrightarrow V_n \\
   f & \longmapsto f_*(v) 
       \end{align*} 
 for $f \in \Hom_{\fig}(\dX, \nX)$. 
 \item \label{PropM(m)FG} Let $V$ be a $\fig$-module and $S \subseteq \coprod_n V_n$ a subset with $s \in V_{d_s}$. Then $V$ is generated by $S$ if and only if the corresponding map of $\fig$-modules $$\bigoplus_{s \in S}  M(d_s) \to V$$ 
 is surjective. In particular, $V$ is finitely generated if and only if it admits a surjective map of $\fig$-modules of the form $$\bigoplus_{1\leq i \leq N}  M(d_i) \to V$$ and generated in degree $\leq d$ if and only if it admits a surjective map of $\fig$-modules of the form $$\bigoplus_{0\leq m\leq d}  M(m)^{\oplus c_m} \to V.$$
    \item $M(d) \cong M(R[G \wr S_d])$.
    \item $M(T)_n \cong 0$ if $n<d$. 
    \item $M(T)_d \cong T$ as a $G\wr S_d$-representation. 
    \item \label{M(T)Induced} There are isomorphisms  $$M(T)_n\cong \Ind_{(G\wr S_d)\times (G \wr S_{n-d})}^{G \wr S_n}(T\boxtimes R) \qquad \text{as a $G \wr S_n$-representation}.$$ 
    Here, $(G\wr S_d)\times (G \wr S_{n-d})$ is the subgroup of $G \wr S_n$ that stabilizes the partition of $\nX=([n] \times X)$ into $\{1, \dots, d\} \times X$ and $\{d+1, \dots, n\} \times X$. Again $R$ has trivial $G \wr S_{n-d}$ action. 

    \item Since  $$\left( \frac{|G \wr S_n|}{|(G\wr S_d)\times (G \wr S_{n-d})|} \right)\ = { n \choose d}$$ then in particular, 
    $$M(T)_n \cong \bigoplus_{n \choose d} T \qquad \text{as an $R$-module}.$$
    Notably, if $T$ is generated as an $R$-module by a set of $N$ generators, then $M(T)_n$ has a $
    \left({n \choose d}N\right)$-element generating set. For $N\neq 0$ this is a polynomial in $n$ of degree $d$.  
    \item Let $R$ be a domain and $T$ be a $(G\wr S_d)$-module with finite rank over $R$. Then, as an $R$-module, $M(T)_n$ has rank 
   $$ \text{rank}_R(M(T)_n) = {n \choose d}\text{rank}_R(T).$$ Notably, when $T$ has positive rank, the rank of $M(T)_n$ is a polynomial in $n$ of degree $d$.  
    \item $M(T)$ is generated as an $\fig$-module in degree $\leq d$ by $M(T)_d \cong T$. 
    \item Given an $\fig$-module $V$, any $G \wr S_d$-equivariant map $T \to V_d$ extends uniquely to a map of $\fig$-modules $M(T) \to V$.  
    \end{enumerate} 
\end{prop}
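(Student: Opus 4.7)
The plan is to establish the structural properties of the free module $M(d)$ first (items (i)--(vi)), then derive the statements for $M(T)$ by tensoring (items (vii)--(xiii)). Items (i) and (ii) are immediate from Definition \ref{DefnM(d)}: $\Hom_{\fig}(\dX, \nX)$ is empty when $n < d$ by the injectivity requirement on underlying label maps, while for $n = d$ every injection is a bijection, so $M(d)_d = R[\End_{\fig}(\dX)] = R[G \wr S_d]$. The heart of the argument is item (iii): by Proposition \ref{Map Factors Lemma in fig context}, postcomposition by $G \wr S_n$ makes $\Hom_{\fig}(\dX, \nX)$ a transitive $G \wr S_n$-set with basepoint $\iota_{d,n}$, and Proposition \ref{PropDescribeStabilizer} identifies its stabilizer as $G \wr S_{n-d}$. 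Orbit-stabilizer then yields an isomorphism of $G\wr S_n$-sets $\Hom_{\fig}(\dX, \nX) \cong (G\wr S_n)/(G\wr S_{n-d})$, and $R$-linearizing gives the induced-representation description.

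Items (iv)--(vi) follow from the universal property of free modules. Since $M(d)_n$ is a free $R$-module on $\Hom_{\fig}(\dX, \nX)$, any element $v \in V_d$ determines a well-defined $R$-linear map $M(d)_n \to V_n$ sending $f \mapsto f_*(v)$; this family is compatible with $\fig$-morphisms by functoriality of $V$, giving the map of (v). Uniqueness holds because any $\fig$-module map $M(d) \to V$ is determined by the image of $id \in M(d)_d$; taking $v = id$ shows $id$ generates $M(d)$, which is (iv). Item (vi) is then immediate: the $\fig$-submodule generated by a set $S$ is spanned by elements $f_*(s)$, which is precisely the image of the canonical map $\bigoplus_{s \in S} M(d_s) \to V$. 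Item (vii) is a direct computation since $M(d)_n \otimes_{R[G \wr S_d]} R[G \wr S_d] \cong M(d)_n$.

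For $M(T)$, items (viii) and (ix) follow from (i) and (ii) by tensoring. For (x), I will upgrade the identification of (iii) to a bimodule statement: $M(d)_n$ is an $(R[G \wr S_n], R[G \wr S_d])$-bimodule isomorphic to $R[(G \wr S_n)/(G \wr S_{n-d})]$, where the right $G \wr S_d$-action is induced by the embedding into the first $d$ coordinates. Standard manipulations with induced representations then yield
$$M(T)_n = M(d)_n \otimes_{R[G\wr S_d]} T \;\cong\; \Ind_{(G\wr S_d)\times(G\wr S_{n-d})}^{G\wr S_n}(T \boxtimes R).$$
Item (xi) follows by taking $R$-ranks, using $|G \wr S_m| = |G|^m \cdot m!$, which gives the index $\binom{n}{d}$. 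Items (xii) and (xiii) follow by combining (v) with the universal property of induction: a $G \wr S_d$-equivariant map $T \to V_d$ extends uniquely to an $\fig$-module map $M(T) \to V$ via $f \otimes t \mapsto f_*(v_t)$, where $v_t$ is the image of $t$ in $V_d$.

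The arguments throughout are formal consequences of the factoring lemma and stabilizer computation. The main obstacle will be the bookkeeping in item (iii) to establish both transitivity and the correct stabilizer, and in item (x) to track the bimodule structure through the orbit-stabilizer identification. Once (iii) is in place, the remaining items closely parallel their $\FI$-module analogues in Church--Ellenberg--Farb \cite[Section 2.2]{CEF}, and the proofs transfer essentially verbatim.
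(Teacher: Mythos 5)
Your proposal is correct and follows exactly the route the paper intends: the paper omits these proofs, citing them as direct analogues of Church--Ellenberg--Farb \cite[Section 2.2]{CEF}, and your argument (orbit-stabilizer via Propositions \ref{Map Factors Lemma in fig context} and \ref{PropDescribeStabilizer} for (iii), universal properties for (iv)--(vi) and (xii)--(xiii), and the bimodule identification $M(d)_n \cong R[(G\wr S_n)/(G\wr S_{n-d})]$ for (x)) is precisely that standard argument transported to $\fig$. No gaps.
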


\begin{remark} We call an $\fig$-module projective if it is a projective object in the category of $\fig$-modules. As with $\FI$-modules \cite[Remark 2.2.A]{CEF}, the projective $\fig$-modules are precisely the direct sums of induced modules $M(T)$ for $T$ a projective $R[G\wr S_d]$-module (Casto \cite[Section 2.2]{casto}, Ramos \cite[Proposition 2.13]{Ramos}). 
\end{remark}

\section{Representation stability} \label{SectionRepStability}

Church--Ellenberg--Farb proved that, if $V$ is a finitely generated $\FI$--module over $\mathbb{Q}$, for all $n$ sufficiently large the decomposition of $V_n$ into irreducible $S_n$--representations stabilizes in a strong sense \cite[Theorem 1.13]{CEF}. Gan--Li generalized this result to $\fig$-modules \cite[Theorem 1.12]{GL}. In this section we summarize their results. 

Let $G$ be a finite group. Throughout the section, assume that $\mathbb{K}$ is a subfield of $\C$ over which all complex irreducible representations of $G$ are defined. When  $G$ is a finite symmetric group, we may take $\mathbb{K}=\Q$. 

\begin{notation}
For a finite group $G$, let $\Irr(G)=\{\chi_1, \dots, \chi_r\}$ denote the set of its $r$ distinct isomorphism classes of irreducible representations over $\mathbb{K}$. Let $\chi_1 \in \Irr(G)$ denote the 1-dimensional trivial representation.  
\end{notation}

To make sense of what it means for the irreducible constituents of a sequence $V_n$ of $G \wr S_n$-representations to stabilize, we need to associate to each irreducible $G \wr S_n$-representation an irreducible representation of $G \wr S_{n+1}$, of $G \wr S_{n+2}$, of $G \wr S_{n+3}$, \ldots. The following definition gives a uniform notation for these ``stable" sequences of irreducible representations.  Recall from Corollary \ref{IrrepsGwreathSn}  that the irreducible representations of $G \wr S_n$ are classified by $n$-multipartitions $\underline{\lambda}$ with $r$ components, that is, partition-valued functions
    $$\underline{\lambda} : \{\chi_1, \ldots, \chi_r\} \to  \{\text{ partitions of nonnegative numbers }\} \qquad \text{ satisfying } \sum_{i=1}^r |\underline{\lambda} (\chi_i)|=n.$$ 

    For a general multipartition $\underline{\lambda}$ with $r$ components, we write $|\underline{\lambda}|$ for the sum $\sum_{i=1}^r |\underline{\lambda} (\chi_i)|$. 
\begin{definition}[Gan--Li {\cite[Section 1.4]{GL}}]\label{new partition}
    Let $\underline{\lambda}$ be any partition-valued function on $\Irr(G)$. Let $\underline{\lambda}(\chi_1)_1$ denote the largest part of the partition $\underline{\lambda}(\chi_1)$; by convention let $\underline{\lambda}(\chi_1)_1=0$ if $\underline{\lambda}(\chi_1)$ is the empty partition.  Let $n \in \Z_{\geq 0}$ such that $n \geq |\underline{\lambda}| + \underline{\lambda}(\chi_1)_1$.  Define $$\underline{\lambda}[n](\chi)=\begin{cases}
(n-|\underline{\lambda}|, \ \underline{\lambda}(\chi_1)) & \chi=\chi_1 \\
\underline{\lambda}(\chi) & \chi \neq \chi_1
\end{cases}$$
Let  $L(\underline{\lambda})_n$ denote the irreducible $\mathbb{K}[G \wr S_n]$-module corresponding to the multipartition $\underline{\lambda}[n]$ when $n \geq |\underline{\lambda}| + \underline{\lambda}(\chi_1)_1$, and let $L(\underline{\lambda})_n =0$ for $n < |\underline{\lambda}| + \underline{\lambda}(\chi_1)_1$. 
\end{definition}

\begin{remark}\label{young diagram remark} We may visualize a multipartition $\underline{\lambda}$ as an $r$-tuple of Young diagrams. The multipartition  $\underline{\lambda}[n]$, then, is constructed from $\underline{\lambda}$ by adding a row of length $n-|\underline{\lambda}|$ to the top of the Young diagram $\underline{\lambda}(\chi_1)$ in the first component. As $n$ increases, the tuples $\underline{\lambda}[n]$ and $\underline{\lambda}[n+1]$ differ only in the length of the top row of the first Young diagram. \\

We illustrate this pattern in the case $G= \Z/3\Z$ (so $r=3$) and the multipartition $\underline{\lambda} = \Big( \;\Y{1}  \; , \; \Y{1,1} \; , \; \Y{2} \Big)$. In this case, we have
\begin{align*}
\underline{\lambda}[n] &\text{ undefined for $n<6$} \\
\underline{\lambda}[6] & = \Big( \;\Y{1,1}  \; , \; \Y{1,1}\;, \; \Y{2} \Big)\\ 
\underline{\lambda}[7] & = \Big( \;\Y{2,1}  \; , \; \Y{1,1}\;, \; \Y{2} \Big)\\ 
\underline{\lambda}[8] & = \Big( \;\Y{3,1}  \; , \; \Y{1,1}\;, \; \Y{2} \Big)\\ 
\underline{\lambda}[9] & = \Big( \;\Y{4,1}  \; , \; \Y{1,1}\;, \; \Y{2} \Big) \\ 
\vdots
\end{align*}
\end{remark}

 Using this notation, the following definition states the criteria for stability for a consistent sequence of  $G \wr S_n$-representations. This generalizes the notion of representation stability stated for symmetric groups in the introduction in Definition \ref{DefnRepStableSn}.

\begin{definition}[{Church--Farb \cite[Definition 1.1]{ChurchFarb}, Gan--Li \cite[ Definition 1.10]{GL}}] \label{RepStabDef} Let $\{\varphi_n: V_n \to V_{n+1}\}$ be a consistent sequence of $\mathbb{K}[G \wr S_n]$-modules $V_n$ in the sense of Definition \ref{DefnConsistentSequence}. 
This sequence is \emph{representation stable} if there exists $N > 0$ such that for each $n \geq N$, the following three conditions hold:
    \begin{enumerate}[(i)]
        \item $\textbf{Injectivity:}$ The map $\varphi_n : V_n \to V_{n+1}$ is injective.
        \item $\textbf{Surjectivity:}$ The span of the $G \wr S_{n+1}$-orbit of $\varphi_n(V_n)$ is all of $V_{n+1}$.
        \item $\textbf{Multiplicities:}$ There is a decomposition
        $$V_n= \bigoplus_{\underline{\lambda}} L(\underline{\lambda})_n^{\oplus c(\underline{\lambda})}$$
        for each $n \geq N$ and where the multiplicities $0 \leq c(\underline{\lambda}) \leq \infty$ do not depend on $n$ and with $|\underline{\lambda}| < N$ for each $\underline{\lambda}$ with positive multiplicity $c(\underline{\lambda})$. 
    \end{enumerate}
\end{definition}

Condition (iii) of Definition $\ref{RepStabDef}$ implies that, for all $n$ large enough,  we can express the decomposition of $V_n$ into irreducible subrepresentations uniformly in $n$ using finitely many multipartitions $\underline{\lambda}$. By Remark $\ref{young diagram remark}$, for all $n$ large enough, the multipartitions $\underline{\lambda}[n]$ indexing the irreducible constituents of $V_n$ only grow in $n$ in the first row of the first component.

Following the work of Church--Ellenberg--Farb on $\FI$-modules, Gan and Li \cite{GL} related finite generation of $\fig$-modules $V$ to representation stability for sequences $(\iota_{n,n+1})_*: V_n \to V_{n+1}$. 

\begin{theorem}[{Church--Ellenberg--Farb \cite[Theorem 1.13]{CEF}, Gan--Li \cite[Theorem 1.12]{GL}}]\label{representation stable}
    Suppose that $\mathbb{K}$ is a splitting field for $G$ of characteristic 0. Let $V$ be an $\fig$-module over $\mathbb{K}$. Then $V$ is finitely generated as an $\fig$-module if and only if the consistent sequence $\{(\iota_{n,n+1})_*: V_n \to V_{n+1}\}$ is a representation stable sequence of $\mathbb{K}[G \wr S_n]$-modules with $\dim_R V_n < \infty$ for each $n$.
\end{theorem}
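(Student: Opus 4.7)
The theorem is an equivalence, and I would prove each direction separately.

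For the easier direction, \emph{representation stability (with $\dim_R V_n < \infty$) implies finite generation}: let $N$ be a stable range for $V$. Since each $V_n$ for $n < N$ is finite-dimensional, the $\fig$-submodule $W \subseteq V$ generated by $V_0 \sqcup V_1 \sqcup \cdots \sqcup V_{N-1}$ is itself finitely generated. I would then show $W = V$ by induction on $n$: for $n < N$ this is definitional, and for $n \geq N$ the inductive hypothesis gives $\varphi_{n-1}(V_{n-1}) \subseteq W_n$, and the $G \wr S_n$-stability of $W_n$ combined with the surjectivity axiom of Definition \ref{RepStabDef} forces $W_n = V_n$.

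For the main implication, \emph{finite generation implies representation stability with $\dim_R V_n < \infty$}, the plan is to analyze the free modules $M(T)$ explicitly and reduce the general case to them. Proposition \ref{M(m)Facts}(vi) supplies a surjection $P := \bigoplus_{i=1}^N M(d_i) \twoheadrightarrow V$; part (xi) shows $\dim_R P_n < \infty$ (in fact polynomial in $n$ of degree $\max_i d_i$), so $\dim_R V_n < \infty$. By Sam--Snowden's Noetherian Theorem \ref{Noetherianity for FIG}, the kernel $K := \ker(P \twoheadrightarrow V)$ is also finitely generated.

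The computational core is the irreducible decomposition of $M(T)_n$ for a $G \wr S_d$-representation $T$. Using Proposition \ref{M(m)Facts}(x) and decomposing $T = \bigoplus_{\underline{\delta}} W_{\underline{\delta}}^{c_{\underline{\delta}}}$ into irreducibles, the wreath-product Pieri rule (Corollary \ref{PieriRuleWreath}) yields
\[
M(T)_n \;\cong\; \bigoplus_{\underline{\delta}} c_{\underline{\delta}} \bigoplus_{\lambda_1} W_{(\lambda_1,\, \delta_2,\, \ldots,\, \delta_r)},
\]
where the inner sum runs over Young diagrams $\lambda_1$ obtained from $\delta_1$ by adding a horizontal strip of size $n - d$. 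The horizontal-strip condition (no two new boxes in the same column) bounds the total number of boxes of $\lambda_1$ lying below its top row by $|\delta_1|$, uniformly in $n$; hence for $n$ sufficiently large each $\lambda_1$ has the form $(n - d - |\delta_1'|,\, \delta_1')$ with $\delta_1'$ ranging over a finite set depending only on $T$. Translating via Definition \ref{new partition} produces a stable decomposition $M(T)_n \cong \bigoplus L(\underline{\lambda})_n^{\oplus c(\underline{\lambda})}$ with $n$-independent multiplicities and the uniform weight bound $|\underline{\lambda}| \leq d$. Injectivity and surjectivity of the structure maps of $M(T)$ are immediate from the explicit induced-representation description.

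To pass from $M(T)$'s to arbitrary finitely generated $V$, I would invoke semisimplicity of $R[G \wr S_n]$ in characteristic zero, yielding $P_n \cong V_n \oplus K_n$ as $G \wr S_n$-representations, and so $[V_n : L(\underline{\lambda})_n] = [P_n : L(\underline{\lambda})_n] - [K_n : L(\underline{\lambda})_n]$. The weight bound from the previous step applies to $P_n$ and hence to both $V_n$ and $K_n$, so only finitely many multipartitions $\underline{\lambda}$ need to be tracked. The main obstacle is to show each multiplicity $[K_n : L(\underline{\lambda})_n]$ is eventually constant in $n$; the plan is to iterate the whole construction on the finitely generated module $K$ (supplied by Noetherianity), closing the argument by induction on a suitable complexity measure such as the pair (weight, generation degree), with the base case being modules of weight $0$, where only trivial $G \wr S_n$-components appear and stability is immediate. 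Injectivity and surjectivity of the structure maps on $V$ then follow from the corresponding properties on $P$ via the semisimple splitting.
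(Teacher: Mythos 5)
The paper does not actually prove this theorem: it is quoted from Gan--Li \cite[Theorem 1.12]{GL} (generalizing Church--Ellenberg--Farb \cite[Theorem 1.13]{CEF}), so there is no internal argument to compare against, and your proposal must be judged on its own. Your ``easier'' direction is essentially correct, modulo an off-by-one: surjectivity in Definition \ref{RepStabDef} is only guaranteed for $n \geq N$, so it controls $V_{n+1}$ only for $n+1 \geq N+1$; you must therefore include $V_N$ among the generators to start the induction. Your analysis of $M(T)_n$ via Corollary \ref{PieriRuleWreath} is also correct and is exactly how the paper handles the $\fig\sh$ case in Corollary \ref{stabilization range}.

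The forward direction, however, has a genuine gap at precisely the point you flag as ``the main obstacle.'' The proposed induction on the pair (weight, generation degree) does not terminate: the kernel $K$ of $P \twoheadrightarrow V$ generically has weight \emph{equal} to that of $P$, and Noetherianity (Theorem \ref{Noetherianity for FIG}) guarantees finite generation of $K$ but gives no bound on its generation degree in terms of that of $V$, so the complexity of $K$ need not be smaller than that of $V$ and the recursion never reaches your weight-zero base case. Relatedly, injectivity of $V_n \to V_{n+1}$ does \emph{not} ``follow from the corresponding properties on $P$'': a quotient of an injective map need not be injective, and one must show that $K$ is suitably saturated in high degrees. The standard way to close both gaps --- in both \cite{CEF} and \cite{GL} --- is not iteration but the notion of \emph{stability degree}: in characteristic zero the coinvariant functors $V \mapsto (V_n)_{G \wr S_{n-a}}$ are exact, the generation degree of any module bounds its surjectivity degree, and applying exactness of coinvariants to $0 \to K \to P \to V \to 0$ bounds the injectivity degree of $V$ by the surjectivity degree of $K$ --- a single application of Noetherianity, with no regress. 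Multiplicity stabilization is then read off from the stabilization of these coinvariants (paired against the finitely many $L(\underline{\lambda})$ permitted by the weight bound), rather than by subtracting multiplicities of $K_n$ from those of $P_n$. Incorporating the stability-degree mechanism is the missing idea needed to make your outline into a proof.
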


\section{$\fig\sharp$-modules} \label{figsharp}

Church--Ellenberg--Farb \cite[Section 4.1]{CEF} study a class of $\FI$-modules $V$ with additional structure which, they proved, places even stronger constraints on the combinatorics of the representations $V_n$ and their characters. Roughly speaking, these are the $\FI$-modules for which the $\FI$ morphisms act by split injections, and admit one-sided inverses (satisfying appropriate compatibility conditions). More precisely, these are $\FI$-modules for which the action of $\FI$ extends to an action of a larger category---denoted by $\FI\sharp$ in Church--Ellenberg--Farb---containing as subcategories both $\FI$ and its opposite category. 

Ramos \cite[Section 2.3]{Ramos} and Casto \cite[Section 2.5]{casto} generalize the theory of $\FI\sharp$-modules to the setting of $\fig$. In this section we review the main results. We use a different characterization of the category $\fig\sharp$ that was suggested to the fourth author by Peter May during her thesis work. 

\begin{definition} \label{DefnFIGsharp} Let $G$ be a finite group. Let $X$ be a faithful transitive $G$-set as in Definition \ref{DefnFIG}. For a finite set $A$, let $(A \times X)_{\bp}$ denote the disjoint union of the set $A \times X$ with a basepoint that we denote by $\bp$. 

Define the category $\fig\sharp$ as follows. The objects are the based sets $(A \times X)_{\bp}$ with basepoint $\bp$  for each finite set $A$. The morphisms are based maps of sets $f: (A \times X)_{\bp} \to (B \times X)_{\bp} $ satisfying the following conditions.
\begin{itemize}
    \item $f(\bp) = \bp$
 \item For any point $(b,x)$ in the codomain distinct from the basepoint, its preimage contains at most 1 element of $A \times X$. We say $f$ is ``injective away from the basepoint.'' 
 \item For each $a \in A$, the image of $(\{a\} \times X)$ is either the basepoint $\{\bp\}$ or the set $(\{b\} \times X)$ for some $b \in B$. In the latter case, the map $$f|_{(\{a\} \times X)} : (\{a\} \times X) \to (\{b\} \times X)$$ is $G$-equivariant. 
\end{itemize}

For brevity we denote the object $([n]\times X)_{\bp}$ by $\nXbp$, for all $n \in \Z_{\geq 0}$.  

\end{definition} 

$\fig$ embeds in $\fig\sh$ as the wide subcategory of injective morphisms. Each injective morphism in $\fig\sh$ admits a left inverse in $\fig\sh$ that maps the complement of its image to the basepoint. These left inverses form a subcategory isomorphic to $\fig^{op}$. The subcategories $\fig, \fig^{op} \subseteq \fig\sh$ together generate $\fig\sh$.

\begin{remark} It is not too difficult to verify that the definition of $\fig\sh$ given in Definition \ref{DefnFIGsharp} is equivalent to the definition used by Casto \cite[Section 2.5]{casto}. The objects are indexed by finite sets. A morphism from $A$ to $B$ is a pair $(Z, f)$, where $Z \subseteq A$ is a subset, and $f: Z \to B$ is an $\fig$ morphism. We may identify $(Z, f)$ with the morphism (in the category of Definition \ref{DefnFIGsharp}) that is defined by $f$ on $Z \times X$ and that maps $(A\setminus Z) \times X$ to the basepoint. 
\end{remark}

As in the case of $\FI$-modules \cite[Example 4.1.4]{CEF}, the action of $\fig$ on $M(d)$ and $M(T)$ extends to an action of $\fig\sh$. For $f \in \Hom_{\fig\sh}(\mXbp, \nXbp),  h \in \Hom_{\fig}(\dX, \mX) \subseteq \Hom_{\fig\sh}(\dXbp, \mXbp)$, we define

\begin{align*} f_*:  R[\Hom_{\fig}(\dX, \mX)] & \longrightarrow  R[\Hom_{\fig}(\dX, \nX)] \\ 
h & \longmapsto \left\{ \begin{array}{ll} (f \circ h)\big\vert_{\dX}, & \text{ if  $\bp \notin (f\circ h)(\dX)$} \\ 0, & \text{ otherwise} \end{array} \right.
\end{align*} 
This in turn defines an action of $\fig\sharp$ on $M(T)$ for any $d$ and $(G \wr S_d)$-representation $T$. 

\begin{definition} For a commutative ring $R$, an $\fig\sh$-module is a functor $V$ from $\fig\sh$ to the category of left $R$-modules. A map of $\fig\sh$-modules over $R$ is a natural transformation of functors. We define kernels, images, etc, of $\fig\sh$-module maps pointwise, as with $\fig$-modules. Any $\fig\sh$-module map will also be a map of $\fig$-modules under the restriction of the actions to $\fig \subseteq \fig\sh$.
\end{definition}

\subsection{The structure theorem for $\fig\sh$-modules} 

Church--Ellenberg--Farb \cite[Theorem 4.1.5]{CEF} give a complete classification of $\FI\sh$-modules. Their argument adapts readily to $\fig\sh$-modules; see Ramos \cite[Theorem 2.19]{Ramos} or Casto \cite[Proposition 2.7]{casto}. 

\begin{theorem}[{Church--Ellenberg--Farb \cite[Theorem 4.1.5]{CEF}; Casto \cite[Proposition 2.7]{casto}}]\label{structure theorem figsharp} %\textbf{(The Structure Theorem of $\fig\#$-modules)}
    Any $\fig\sh$-module, up to isomorphism of $\fig\sh$-modules, has the form $\bigoplus_{d\geq 0}M(T_d)$ for $T_d$ a $G \wr S_d$-representation (possibly 0).
\end{theorem}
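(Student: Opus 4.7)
The plan is to adapt the proof of Church--Ellenberg--Farb \cite[Theorem 4.1.5]{CEF} for $\FI\sh$-modules to the $\fig\sh$ setting. The key observation is that $\fig\sh$ carries a distinguished family of commuting idempotent endomorphisms indexed by the Boolean lattice of subsets, and Möbius inversion on that lattice will produce a canonical direct-sum decomposition of every $\fig\sh$-module whose summands can be identified with the representables $M(T_d)$. Concretely, for each finite set $A$ and subset $B \subseteq A$, let $e_B^A \in \End_{\fig\sh}((A \times X)_{\bp})$ be the partial identity that fixes $B \times X$ pointwise and sends $(A \setminus B) \times X$ to the basepoint. A direct check gives $e_B^A \circ e_{B'}^A = e_{B \cap B'}^A$, so these commute, and Möbius inversion produces pairwise orthogonal idempotents
$$T_B^A := \sum_{B' \subseteq B} (-1)^{|B|-|B'|}\, e_{B'}^A, \qquad \sum_{B \subseteq A} T_B^A = \mathrm{id}, \qquad T_B^A \cdot T_{B'}^A = 0 \text{ for } B \neq B'.$$

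For any $\fig\sh$-module $V$, applying the $T_B^A$ at the object $(A \times X)_{\bp}$ would yield a canonical decomposition $V_A = \bigoplus_{B \subseteq A} V_A^B$, with $V_A^B := T_B^A(V_A)$. I would then verify naturality: for any $\fig\sh$-morphism $f:(A\times X)_{\bp} \to (A'\times X)_{\bp}$ with domain of injectivity $Z \subseteq A$, the conjugation identity $e_C^{A'} \circ f = f \circ e_{f^{-1}(C) \cap Z}^A$ would imply $f_*(V_A^B) \subseteq V_{A'}^{f(B)}$ when $B \subseteq Z$, and $f_*(V_A^B) = 0$ otherwise. In particular, the automorphism group $G \wr S_d = \End_{\fig}(\dX) \subseteq \End_{\fig\sh}(\dXbp)$ preserves the summand $V_{[d]}^{[d]}$, so I may define $T_d := V_{[d]}^{[d]}$ as a $G \wr S_d$-representation.

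To produce the claimed isomorphism, I would use Proposition \ref{M(m)Facts}: the inclusion $T_d \hookrightarrow V_d$ of $G \wr S_d$-representations extends uniquely to an $\fig$-module map $M(T_d) \to V$, and the naturality established above together with the $\fig\sh$-action on $M(T_d)$ (defined before the statement) would force this map to upgrade uniquely to an $\fig\sh$-map. Summing over $d$ yields $\Phi : \bigoplus_{d \geq 0} M(T_d) \to V$. To see $\Phi_A$ is an isomorphism at each finite set $A$, I would use the induced-representation description of $M(T_d)_A$ from Proposition \ref{M(m)Facts} together with orbit-stabilizer to decompose $M(T_d)_A$ as $\binom{|A|}{d}$ copies of $T_d$ indexed by subsets $B \subseteq A$ with $|B| = d$, then match these bijectively against the summands $V_A^B$ of the canonical decomposition. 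Both sides carry compatible actions of $\fig\sh$-morphisms by the naturality statement, so the match is an isomorphism of $\fig\sh$-modules.

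The main obstacle is checking the naturality assertion for a \emph{general} $\fig\sh$-morphism, since such a morphism carries both combinatorial data (a partial injection of label sets) and algebraic data (a $G$-twist on each preserved fiber). Unlike in $\FI\sh$, one must verify that the $G$-twists do not interfere with the Möbius-inversion identities. Fortunately, the idempotents $e_B^A$ depend only on the label data and not on any $G$-twist, so the conjugation identity $e_C^{A'} \circ f = f \circ e_{f^{-1}(C) \cap Z}^A$ reduces cleanly to the underlying label-set combinatorics. Pinning down this compatibility is the crux of the argument; once it is established, all remaining steps are formal consequences of Proposition \ref{M(m)Facts} and the universal property of the modules $M(T_d)$.
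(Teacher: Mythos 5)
Your proposal is correct and is essentially the argument the paper invokes: the paper proves this theorem by citation to Church--Ellenberg--Farb \cite[Theorem 4.1.5]{CEF} (and Casto, Ramos), whose proof is exactly the M\"obius-inversion decomposition via the commuting partial-identity idempotents $e_B^A$ that you describe, and your observation that these idempotents carry trivial $G$-twist so the conjugation identity reduces to the label-set combinatorics is precisely why, as the paper says, "their argument adapts readily" to $\fig\sh$.
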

Theorem \ref{structure theorem figsharp} states that the underlying $\fig$-module of an $\fig\sharp$-module is an induced $\fig$-module in the sense of Definition \ref{DefnInduced}. Concretely, in light of the description of $M(T)$ in Proposition \ref{M(m)Facts}, Theorem \ref{structure theorem figsharp} implies that any $\fig\sh$-module  has the following structure. 

\begin{corollary} \label{StructureTheoremCorollary}
     Let $V$ be a $\fig\sh$-module over a commutative ring $R$. 
Then there is some sequence of $G \wr S_d$-representations $T_d$ such that, as $G \wr S_n$-representations, 
$$V_n \cong \bigoplus_{d \geq 0} \Ind_{(G\wr S_d)\times (G \wr S_{n-d})}^{G \wr S_n}(T_d\boxtimes R) \qquad \text{for all $n$,}$$ where the summand  $\Ind_{(G\wr S_d)\times (G \wr S_{n-d})}^{G \wr S_n}(T_d\boxtimes R)$ is understood to be zero whenever $n<d$. 

The underlying $\fig$-module of $V$ is finitely generated in degree $\leq m$ if and only if $T_d=0$ for all $d > m$. 

If $R$ is a domain and each representation $T_d$ has finite rank as an $R$-module, then
$$ \text{rank}_R(V_n) = \sum_{d \geq 0} {n \choose d} \text{rank}_R(T_d) . $$ 
If some representation $T_d$ has infinite rank, then $V_d$ will have infinite rank. 
\end{corollary}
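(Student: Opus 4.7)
The plan is to read the corollary directly off Theorem \ref{structure theorem figsharp} and the properties of induced modules collected in Proposition \ref{M(m)Facts}. By Theorem \ref{structure theorem figsharp}, I may choose $G \wr S_d$-representations $T_d$ and an isomorphism of $\fig\sh$-modules $V \cong \bigoplus_{d \geq 0} M(T_d)$. Each claim in the corollary will then follow by evaluating this decomposition at a given level $n$ and invoking the appropriate part of Proposition \ref{M(m)Facts}; no new structural idea is required.

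For the induced-representation decomposition, evaluating the $\fig\sh$-module isomorphism at $\nXbp$ gives a $G \wr S_n$-equivariant isomorphism $V_n \cong \bigoplus_{d \geq 0} M(T_d)_n$. The sum is effectively finite because Proposition \ref{M(m)Facts}(viii) forces $M(T_d)_n = 0$ whenever $d > n$. Applying Proposition \ref{M(m)Facts}(\ref{M(T)Induced}) to each nonzero summand identifies it with $\Ind_{(G \wr S_d) \times (G \wr S_{n-d})}^{G \wr S_n}(T_d \boxtimes R)$, which is the stated formula.

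For the generation-degree equivalence, the forward implication follows from Proposition \ref{M(m)Facts}(xii): if the nonzero $T_d$ occur only for $d < m$, then $V$ is a finite sum of $\fig$-modules, each generated in degree at most $m - 1$. The converse is the only step that requires some care. The key observation is that, inside $V_d \cong \bigoplus_{e \leq d} M(T_e)_d$, the summand $M(T_d)_d \cong T_d$ (by Proposition \ref{M(m)Facts}(ix)) is disjoint from the image of the $R$-span of $\bigcup_{n < d} V_n$ under $\fig$-morphisms, because each $M(T_e)$ with $e < d$ is itself generated in degree $\leq e < d$ and its $\fig$-action lands in its own summand. Consequently, any nonzero $T_d$ contributes genuinely new generators at level $d$ that cannot be reached from lower levels, forcing $T_d$ to vanish beyond the generation degree. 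This bookkeeping is the main---and essentially only---obstacle in the proof.

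For the rank formula, summing Proposition \ref{M(m)Facts}(xi) over the summands yields
\[
\text{rank}_R(V_n) \;=\; \sum_{d \geq 0} \binom{n}{d}\, \text{rank}_R(T_d),
\]
where only finitely many terms contribute at each $n$ since $\binom{n}{d} = 0$ for $d > n$. Finally, if some $T_d$ has infinite $R$-rank, then $V_d$ contains $M(T_d)_d \cong T_d$ as a summand by Proposition \ref{M(m)Facts}(ix), so $V_d$ is of infinite $R$-rank as well, completing the corollary.
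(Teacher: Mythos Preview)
Your proposal is correct and is exactly the argument the paper intends: the corollary is stated without proof, immediately after the sentence ``Concretely, in light of the description of $M(T)$ in Proposition \ref{M(m)Facts}, Theorem \ref{structure theorem figsharp} implies that any $\fig\sh$-module has the following structure,'' so the paper is simply reading off the structure theorem together with the listed properties of $M(T)$---precisely what you do. Your handling of the converse in the generation-degree equivalence (observing that $\fig$-morphisms preserve the direct-sum decomposition, so elements of $M(T_d)_d \cong T_d$ cannot lie in the span of lower levels) is the right justification and is the only point requiring any thought.
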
 

As observed implicitly by Gan--Li \cite[Section 8.1]{GL} and Casto \cite[Corollary 2.8]{casto}, Pieri's rule for wreath products (stated in Corollary \ref{PieriRuleWreath}) therefore implies that finitely generated $\fig\sh$-modules are representation stable in the sense of Definition \ref{RepStabDef}, with stable range depending only on the degree of generation. In the following corollary, we collect this and other consequences of the structure theorem for $\fig\sh$-modules. 

\begin{corollary} \label{stabilization range} \label{characterization finitely generated fig} Let $V$ be an $\fig\sh$-module over a commutative ring $R$. The following hold. 
\begin{enumerate}[(i)]
\item Suppose, as an $\fig$-module, $V$ is generated in degree $\leq d$. Then $V \cong \bigoplus_{0 \leq m  \leq d} M(T_m)$ for some $G \wr S_m$ representations $T_m$ for $0 \leq m \leq d$. 
\item The $\fig$-module $V \cong \bigoplus_m M(T_m)$ is finitely generated  if and only if $\bigoplus_m T_m$ is finitely generated as an $R$-module, that is, each $R$-module $T_m$ is finitely generated, and nonzero for only finitely many $m$. 
\item Let $R = \mathbb{K}$ be a splitting field  for $G$ of characteristic 0. Suppose, as an $\fig$-module, $V$ is finitely generated in degree $\leq d$. Then $V$ is representation stable in the sense of Definition \ref{RepStabDef}, stabilizing for $n \geq 2d$. 
\item Let $R$ be a domain. Suppose $V$ is finitely generated as an $\fig$-module in degree $\leq d$. Then the sequence rank$_{R}(V_n)$ is equal to a polynomial in $n$, and the polynomial has degree at most $d$. 
\item  Conversely, suppose $V_n$ is a free $R$-module for all $n$, and suppose rank$_{R}(V_n)$ is a polynomial in $n$ of degree $\leq d$.  Then $V \cong \bigoplus_{0 \leq m \leq d} M(T_m)$. Consequently, as an $\fig$-module, $V$ is finitely generated in degree $\leq d$. If rank$_{R}(V_n)$ is a polynomial of degree exactly $d$, then $T_d \neq 0$, and the bound on degree of generation is sharp, in the sense that $V$ is not generated in degree $\leq (d-1)$. 
\end{enumerate}

\end{corollary}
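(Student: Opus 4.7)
The plan is to deduce all five parts from Theorem \ref{structure theorem figsharp}, which provides an isomorphism $V \cong \bigoplus_{d\geq 0} M(T_d)$ for some $G \wr S_d$-representations $T_d$, combined with the explicit properties of the modules $M(T_d)$ collected in Proposition \ref{M(m)Facts}.

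For (i) and (ii), the key observation is that $M(T_d)_n = 0$ for $n<d$ and that $M(T_d)$ is $\fig$-generated in degree exactly $d$ by the submodule $M(T_d)_d \cong T_d$. Consequently, the $\fig$-submodule of $V$ generated by classes in degrees at most $m$ lies within $\bigoplus_{d \leq m} M(T_d)$; if $V$ is itself $\fig$-generated in degree $\leq m$, we must have $T_d = 0$ for all $d>m$, proving (i). Part (ii) then follows immediately, since a finite generating set exists if and only if all $T_m$ are finite dimensional and only finitely many are nonzero.

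For (iii), by (i) the hypothesis gives $V \cong \bigoplus_{m\leq d} M(T_m)$, and it suffices to show each summand $M(T_m)$ is representation stable with stable range $n \geq 2m$. Decomposing $T_m$ into irreducibles $W_{\underline{\delta}}$ indexed by multipartitions $\underline{\delta}$ of $m$, the induced representation description in Proposition \ref{M(m)Facts}(\ref{M(T)Induced}) combined with Corollary \ref{PieriRuleWreath} expresses $M(W_{\underline{\delta}})_n$ as a direct sum of $W_{(\lambda_1,\delta_2,\dots,\delta_r)}$, where $\lambda_1$ ranges over Young diagrams obtained from $\delta_1$ by adding $(n-m)$ boxes in distinct columns. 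For each such $\lambda_1$, let $\underline{\mu}'$ be the multipartition with first component $(\lambda_1)_{i\geq 2}$ and remaining components $\delta_i$. The horizontal-strip constraint yields $(\delta_1)_{i+1} \leq \underline{\mu}'(\chi_1)_i \leq (\delta_1)_i$, from which the bound $|\underline{\mu}'| + \underline{\mu}'(\chi_1)_1 \leq 2|\delta_1| \leq 2m$ follows; hence once $n \geq 2m$ the stable label $L(\underline{\mu}')_n$ from Definition \ref{new partition} is valid and the set of such $\underline{\mu}'$ is finite and independent of $n$. Injectivity and surjectivity of $(\iota_{n,n+1})_*$ follow from its explicit description as the canonical inclusion of induced representations.

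Parts (iv) and (v) follow from the rank formula $\mathrm{rank}_R(M(T_m)_n) = \binom{n}{m}\mathrm{rank}_R(T_m)$ of Proposition \ref{M(m)Facts}, which summed over $m$ gives $\mathrm{rank}_R(V_n) = \sum_m \binom{n}{m}\mathrm{rank}_R(T_m)$. This is a polynomial of degree $\leq d$ when $T_m=0$ for $m>d$, giving (iv). For the converse in (v), the binomials $\binom{n}{m}$ are $\Q$-linearly independent as polynomials in $n$, so a polynomial rank identity of degree $\leq d$ forces $\mathrm{rank}_R(T_m) = 0$ and hence $T_m = 0$ for all $m>d$; sharpness when the polynomial has exact degree $d$ follows from the nonvanishing of the $\binom{n}{d}$-coefficient. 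I anticipate the main obstacle to be the combinatorial bookkeeping for the stable range in (iii); however, since only the first component of each multipartition is affected by the Pieri step, the analysis reduces to the classical symmetric-group case treated by Church--Ellenberg--Farb for $\FI$-modules.
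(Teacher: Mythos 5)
Your proof follows the same route the paper intends: the structure theorem for $\fig\sh$-modules combined with the properties of the induced modules $M(T)$ and Pieri's rule for wreath products (the paper itself only sketches this, citing Corollary \ref{PieriRuleWreath} and the structure theorem). The one blemish is the intermediate bound $|\underline{\mu}'| + \underline{\mu}'(\chi_1)_1 \leq 2|\delta_1|$, which ignores the components $\delta_2,\dots,\delta_r$; the correct chain is $|\underline{\mu}'| + \underline{\mu}'(\chi_1)_1 \leq m + (\delta_1)_1 \leq 2m$, so the claimed stable range $n \geq 2m$ still follows.
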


\section{Character polynomials for $\fig$-modules} \label{SectionCharacterPolynomials}

Casto shows that, as with $\FI\sh$-modules (Church--Ellenberg--Farb \cite[Theorem 4.1.7]{CEF}), the characters of $\fig\sh$-modules are highly constrained and admit a description that is uniform in $n$.  

Recall from Section \ref{SectionWreathConjClasses} that the conjugacy classes of $G \wr S_n$ are classified by the cycle type of a permutation in $S_n$, along with (associated to each unordered cycle) a conjugacy class of an element of $G$. 

\begin{notation} Let $G$ be a finite group, and let $C_1, \dots, C_r$ be a complete list of its distinct conjugacy classes. Fix an integer $d \geq 1$. For any $n$, let $X_d^{C_i}$ denote the class function on $G \wr S_n$ that outputs the number of $d$-cycles in its cycle type labelled by the conjugacy class $C_i$. We call the functions $X_d^{C_i}$ the \emph{coloured cycle-counting functions}, as they count the number of cycles in the cycle type of an element of $G\wr S_n$ that are ``coloured" by a specified conjugacy class of $G$. 
\end{notation}
In the notation of James--Kerber (see Definition \ref{DefnWreathConjClasses}), the function $X_d^{C_i}$ is the function $a_{id}$. \\

As Casto calculates explicitly \cite[Theorem 2.2]{casto}, the formula for $M(T)$ as an induced representation (Theorem \ref{M(m)Facts} part (\ref{M(T)Induced})) allows us to express the character of $M(T)_n$ in terms of the coloured cycle-counting functions. The result is that the sequence of characters of $M(T)_n$ are given by a unique polynomial in the coloured cycle-counting functions, independent of $n$. 

The coloured cycle-counting functions $X_d^{C_i}$ (viewed as functions on the set $\coprod_n G\wr S_n$) are algebraically independent, and we consider the polynomial ring $$\mathbb{K}[\; X_d^{C_i} \; | \; d \geq 1, \; \text{$C_i$ a conjugacy class of $G$ }]$$ as a graded ring for which the element $X_d^{C_i}$ has degree $d$.

\begin{prop}[{\cite[proof of Theorem 2.2]{casto}}] \label{CharacterM(T)} Let $G$ be a finite group with conjugacy classes $C_1, \dots, C_r$. Let $\mathbb{K}$ be a splitting field for $G$ of characteristic zero.    Let $T$ be the $G\wr S_d$-representation with character $\chi^T$. Then, for every $n$, the $G \wr S_n$-representation $M(T)_n$ has character equal to the degree-$d$ character polynomial
$$ \chi^{M(T)_n} = \sum_{\substack{\text{conjugacy classes} \\ \text{$\sigma$ of $G \wr S_d$}}} \chi^T(\sigma) \left( \prod_{i} \prod_{1 \leq m \leq d} {X^{C_i}_m \choose X^{C_i}_m(\sigma)  }\right).$$
\end{prop}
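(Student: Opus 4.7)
The plan is to apply the Frobenius formula for characters of induced representations to the isomorphism from Proposition \ref{M(m)Facts}(\ref{M(T)Induced}), which identifies
\begin{equation*}
M(T)_n \cong \Ind_{H_d}^{G \wr S_n}(T \boxtimes R), \qquad H_d := (G \wr S_d) \times (G \wr S_{n-d}).
\end{equation*}

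First, I would identify the coset space $G \wr S_n / H_d$ with the set of $d$-element subsets of $[n]$. An element of $G \wr S_n$ acts on $\nX$ by applying its $S_n$-part to the first coordinate and a $G$-shift to the second, so the setwise stabilizer of $\{1,\dots,d\} \times X$ is exactly $H_d$, and its orbit consists of the subsets $J \times X$ for $J \subseteq [n]$ of size $d$ (matching the index $\binom{n}{d}$). Correspondingly, as an $R$-module the induced representation decomposes as $M(T)_n = \bigoplus_{|J|=d} T_J$ with each $T_J \cong T$.

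Next, fix $\pi \in G \wr S_n$ and compute its trace on this direct sum. Since $\pi$ permutes the summands $T_J$ via the action of its $S_n$-part on $d$-subsets, only $\pi$-invariant subsets $J$ contribute. For such a $J$ the restriction $\pi|_{J \times X}$ lies in the $G \wr S_d$-factor of $H_d$ (while the action on the complementary $G \wr S_{n-d}$-factor is trivial on $T \boxtimes R$), and it acts on $T_J \cong T$ via this identification. Hence
\begin{equation*}
\chi^{M(T)_n}(\pi) \;=\; \sum_{\substack{J \subseteq [n],\ |J|=d \\ \pi(J) = J}} \chi^T\!\left(\pi|_{J \times X}\right).
\end{equation*}

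Finally, I would classify the $\pi$-invariant size-$d$ subsets of $[n]$ by their cycle structure: each such subset is a union of cycles of $\pi$ whose lengths sum to $d$, and the cycle products (hence the $G$-colours in the sense of Section \ref{SectionWreathConjClasses}) of those cycles are inherited unchanged by $\pi|_{J \times X}$. Thus for each conjugacy class $\sigma$ of $G \wr S_d$ with coloured cycle counts $X^{C_i}_m(\sigma)$, the number of invariant subsets $J$ for which $\pi|_{J \times X}$ lies in class $\sigma$ equals the number of ways to choose, for each colour $C_i$ and each length $m$, an $X^{C_i}_m(\sigma)$-element subset from the $X^{C_i}_m(\pi)$ available cycles, namely $\prod_{i}\prod_{0 \leq m \leq d} \binom{X^{C_i}_m(\pi)}{X^{C_i}_m(\sigma)}$. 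Summing over conjugacy classes $\sigma$ of $G \wr S_d$ yields the stated character polynomial. The substantive steps are the stabilizer/coset identification and verifying that cycle colours are preserved under restriction to an invariant subset; the remainder is bookkeeping that requires no input beyond the description of wreath-product conjugacy classes and the formula for $M(T)_n$ as an induced representation.
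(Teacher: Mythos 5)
Your proposal is correct and follows essentially the same route the paper relies on: the paper does not prove this proposition itself but cites Casto's computation, which is exactly this application of the induced-character (Frobenius) formula to $M(T)_n \cong \Ind_{(G\wr S_d)\times(G\wr S_{n-d})}^{G\wr S_n}(T\boxtimes R)$, identifying cosets with $d$-element subsets of $[n]$ and counting the $\pi$-invariant ones by coloured cycle type. The only point worth tightening is that for an invariant subset $J$ the action on the summand $T_J$ is given by conjugating by a coset representative rather than by a literal restriction, but since characters are class functions and the coloured cycle type of the restriction is independent of the identification $J\cong[d]$, this does not affect the computation.
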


In light of the structure theorem for $\fig\sh$-modules and its consequences (see Theorem \ref{structure theorem figsharp} and Corollary \ref{StructureTheoremCorollary}), Proposition \ref{CharacterM(T)} has the following corollary. 

\begin{corollary} \label{CorCharacterPolynomial}
     Let $V$ be an $\fig\sh$-module over a splitting field $\mathbb{K}$ of $G$ of characteristic zero. Suppose $\dim_{\mathbb{K}}(V_n)$ is finite for each $n$, and $V$ is generated as an $\fig$-module in degree $\leq m$. Then the sequence of characters $\chi^{V_n}$ is given by a single polynomial
$$P^V \in \mathbb{K}[\; X_d^{C_i} \; | \; d \geq 1, \; \text{$C_i$ a conjugacy class of $G$ }],$$ independent of $n$, in the coloured cycle-counting functions $X_d^{C_i}$, and $P^V$ has degree $\leq m$. 
\end{corollary}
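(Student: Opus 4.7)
The plan is to reduce the statement to Proposition \ref{CharacterM(T)} by invoking the structure theorem for $\fig\sh$-modules. First, I would apply Theorem \ref{structure theorem figsharp} to write $V \cong \bigoplus_{d \geq 0} M(T_d)$ as $\fig\sh$-modules, for some sequence of $G \wr S_d$-representations $T_d$ over $\mathbb{K}$. Since $V$ is finitely generated as an $\fig$-module in degree $\leq m$, Corollary \ref{characterization finitely generated fig} (part (i)) lets me truncate this decomposition to
\[
V \cong \bigoplus_{0 \leq d \leq m} M(T_d),
\]
and the finiteness assumption $\dim_\mathbb{K}(V_n) < \infty$ together with Corollary \ref{StructureTheoremCorollary} forces each $T_d$ to be finite-dimensional over $\mathbb{K}$.

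Next, since characters are additive on direct sums, I would write $\chi^{V_n} = \sum_{d=0}^m \chi^{M(T_d)_n}$ as class functions on $G \wr S_n$, valid for every $n$. Applying Proposition \ref{CharacterM(T)} to each summand yields a character polynomial $P_d \in \mathbb{K}[X_r^{C_i}]$ of degree exactly $d$ (or zero, if $T_d = 0$) that computes $\chi^{M(T_d)_n}$ for all $n$ simultaneously. Setting
\[
P^V := \sum_{d=0}^m P_d \in \mathbb{K}\bigl[\, X_r^{C_i} \,\bigl|\, r \geq 1,\ C_i \text{ a conjugacy class of } G\,\bigr],
\]
we obtain a polynomial of degree $\leq m$ whose evaluation on any element of $G \wr S_n$ agrees with $\chi^{V_n}$.

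The only subtlety worth verifying carefully is uniqueness and the identification of $P^V$ as a bona fide element of the polynomial ring, which rests on the algebraic independence of the coloured cycle-counting functions $X_r^{C_i}$ on $\coprod_n G \wr S_n$; this is asserted in the paragraph preceding Proposition \ref{CharacterM(T)}. Beyond that, the argument is almost purely formal: the structure theorem packages the hard representation-theoretic content, and additivity of characters does the rest. No step looks like a genuine obstacle, but the place to be careful is checking that the degree bound from Proposition \ref{CharacterM(T)} (which produces a polynomial of degree $d$ for $M(T_d)_n$) actually matches the $\fig$-module generation degree, so that summing contributions for $0 \leq d \leq m$ gives total degree $\leq m$ rather than anything larger.
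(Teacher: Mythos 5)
Your proposal is correct and is essentially the paper's own argument: the paper derives this corollary precisely by combining the structure theorem for $\fig\sh$-modules (writing $V \cong \bigoplus_{0 \leq d \leq m} M(T_d)$, with the truncation coming from the generation degree) with Proposition \ref{CharacterM(T)} and additivity of characters. Your attention to the degree bookkeeping and to the algebraic independence of the $X_r^{C_i}$ matches the setup the paper establishes just before Proposition \ref{CharacterM(T)}.
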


Casto \cite[Theorem 2.2]{casto}, generalizing Church--Ellenberg--Farb, in fact proves that the characters of all finitely generated $\fig$-modules are equal to a character polynomial for $n$ sufficiently large, with the stable range depending on an invariant of $V$ called the \emph{relation degree}.

\section{Representation stability for vertical configuration spaces} \label{VConfSection} 

Recall the vertical configuration spaces $\widetilde{\cV}_{K}(\R^{p,q})$ of points in $\R^{p+q}$ defined in Definition \ref{DefnVConf}.  In this section, we prove the main theorems of our paper: representation stability for the (co)homology of these spaces, and classical (co)homological stability for the unordered vertical configuration spaces. 

In Section \ref{BK-Calculation}, we summarize the computation of the cohomology groups $H^{*}\left(\widetilde{\cV}_{K}(\R^{p,q})\right)$ due to Bianchi--Kranhold \cite{Vconf}. We use this calculation in Section \ref{BettiPolyGrowth} to prove that, in each cohomological degree, the ranks of these free abelian groups grow polynomially. Then, in Section \ref{CohomologyFunctorial}, we verify that these cohomology groups assemble to form an $\fig$-module. This enables us to prove our main theorem, Theorem \ref{mainThm}. Finally, in Section \ref{TransMapSection}, we review the theory of the transfer homomorphism of a regular covering space, and use it to deduce classical (co)homological stability for the unordered vertical configuration spaces. 

\begin{remark}
 In his beautiful work \cite{Gadish-SubspaceArrangements}, Gadish gave broad general topological criteria for a family of complements of linear subspace arrangements to satisfy (co)homological representation stability. We believe that these results would give an alternate approach to our main theorem, which does not rely on the calculations done by Bianchi--Kranhold. 
\end{remark}

\subsection{Bianchi--Kranhold's computation of the cohomology groups $H^{*}\left(\widetilde{\cV}_{K}(\R^{p,q})\right)$} \label{BK-Calculation}

Bianchi--Kranhold index a basis for the integral cohomology groups $H^{*}\left(\widetilde{\cV}_{K}(\R^{p,q})\right)$ using combinatorial objects they call \emph{ray partitions} \cite[Section 3.1 and Theorem 3.7]{Vconf}. We now review their definitions. \\

Throughout this subsection, we fix a partition $K=(k_1,\dots, k_r)$ with $k_i \geq 1$.
\begin{definition}[{\cite[Definition 3.1]{Vconf}}] \label{DefnTable}
    The table associated with the partition $K$ is the set
    $$T_K :=\{(i,j): 1 \leq i \leq r, 1 \leq j \leq k_i\}.$$
    We order $T_K$ lexicographically, which means we write $(i,j) < (i',j')$ if either $i<i'$, or $i=i'$ and $j<j'$.
\end{definition}
For the following two definitions, let $Q$ be a partition of $T_K$ into non-empty subsets $Q_1,\dots, Q_\ell$.
\begin{definition}[\cite{Vconf}, Notation 3.2]
    Let $\ell(Q)=\ell$ denote the $\emph{length}$ of the partition. Note that $$1 \leq \ell(Q) \leq |K|.$$
\end{definition}
\begin{definition}[{\cite[Notation 3.2]{Vconf}}]
    Let $\sim$ be the equivalence relation on the index set $\{1, \dots, \ell\}$ generated by the identifications $$\beta \sim \beta' \quad  \iff \quad  \text{$ \exists \; (i,j),  (i,j') \in T_K$ with $j\neq j'$ such that $(i,j) \in Q_\beta$ and $(i,j') \in Q_{\beta'}$.}$$ We define the $\emph{agility}$ of $Q$, denoted by $a(Q)$, to be the number of equivalence classes under $\sim$. Note that $$1 \leq a(Q) \leq \min(\ell(Q), r).$$ 
\end{definition}

\begin{definition}[{\cite[Definition 3.3]{Vconf}}] \label{DefnRayPartition}
    A $\emph{ray partition}$ $Q$ of type $K$ is a partition of non-empty subsets $Q_1,\dots, Q_\ell$ of $T_K$, with a total order $<_{\beta}$ on each piece $Q_\beta$ (called a \emph{ray}), such that the following hold:
    \begin{enumerate}[(i)]
        \item if $<$ denotes the global order, i.e., the order in $T_K$, then the parts $Q_i$ are indexed according to their global order, $$\text{min}(Q_1,<) < \cdots < \text{min}(Q_\ell,<).$$
        \item for each $1 \leq \beta \leq \ell$,
        $$\text{min}(Q_\beta, <_{\beta}) = \text{min}(Q_\beta, <).$$
    \end{enumerate}
\end{definition}

Bianchi--Kranhold \cite[Theorem 3.7]{Vconf} provide the following explicit description of the cohomology groups of $\widetilde{\cV}_{K}(\R^{p,q})$.

\begin{theorem}[{\cite[Theorem 3.7]{Vconf}}]\label{cohomology heorem Bianchi}
    Let $p \geq 0$, $q \geq 1$ and $K=(k_1,\dots, k_r)$ with $k_i \geq 1$. The integral cohomology $H^{*}(\widetilde{\cV}_{K}(\R^{p,q}))$ is freely generated as an abelian group by classes $u_Q$ for each ray partition $Q$, and the cohomological degree of $u_Q$ is
    $$|u_Q| = p(r-a(Q)) + (q-1)(|K|-\ell(Q)).$$
\end{theorem}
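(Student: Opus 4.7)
The plan is to realize $\widetilde{\cV}_K(\R^{p,q})$ as the complement of a real subspace arrangement inside a linear space, and to extract its cohomology from the arrangement combinatorics. The verticality conditions $pr(z_i^1) = \dots = pr(z_i^{k_i})$ cut out a linear subspace $L \subseteq (\R^{p+q})^{|K|}$ of real dimension $pr + q|K|$. Inside $L$, the space $\widetilde{\cV}_K(\R^{p,q})$ is the complement of the union, over pairs $(i,j) \neq (i',j')$ in $T_K$, of the linear subspaces $D_{(i,j),(i',j')} := \{z_i^j = z_{i'}^{j'}\} \cap L$. A single pair-coincidence has codimension $q$ in $L$ when $i = i'$ (the $\R^p$-parts already agree by verticality) and codimension $p+q$ when $i \neq i'$.

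For each set partition $Q$ of $T_K$ into blocks $Q_1, \dots, Q_\ell$, let $F_Q \subseteq L$ be the stratum along which all points in each block $Q_\beta$ are identified. The dimension of $F_Q$ can be counted directly: each of the $\ell(Q)$ blocks carries a single $\R^q$-coordinate, contributing $q\,\ell(Q)$ dimensions, while the $\R^p$-coordinates are constrained to be constant on each equivalence class of columns linked through a shared block, contributing $p\,a(Q)$ dimensions. Hence $\mathrm{codim}_L F_Q = p(r - a(Q)) + q(|K| - \ell(Q))$. A minimal system of pair-coincidences cutting out $F_Q$ has size $|K| - \ell(Q)$ (choose a spanning tree inside each block), and this should serve as the arrangement-theoretic rank of $F_Q$.

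Applying the Goresky--MacPherson formula (or an equivalent Orlik--Solomon-type model specialized to this arrangement) then expresses $H^*$ of the complement as a direct sum of contributions indexed by the flats $F_Q$, with each contribution living in cohomological degree equal to $\mathrm{codim}_L F_Q$ minus the rank, yielding exactly $p(r - a(Q)) + (q-1)(|K| - \ell(Q))$, as stated. To upgrade this rank calculation to an honest $\Z$-basis indexed by ray partitions, I would exploit the extra data in Definition \ref{DefnRayPartition}: the total orderings $<_\beta$ on each block, subject to the min-compatibility condition. This additional data is designed precisely to pick out a unique no-broken-circuit-style representative for each flat, generalizing Arnold's classical basis for $H^*(\Conf_n(\R^d))$, which is recovered in the special case $k_1 = \dots = k_r = 1$.

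The main obstacle will be establishing integral freeness and the explicit basis rather than merely the total Betti numbers, since subspace-arrangement complements can a priori carry torsion. I would address this by constructing an explicit cell decomposition, or a Fulton--MacPherson-style compactification of $\widetilde{\cV}_K(\R^{p,q})$, whose cellular chain complex is freely generated by ray partitions with trivial differential. Alternatively, one could proceed by induction on $|K|$ using a Fadell--Neuwirth-type fibration obtained by forgetting the last point of the last cluster, with fiber a punctured $\R^q$ (if the cluster retains further points) or a punctured $\R^{p+q}$ (if that was its only point). One then verifies that the associated Serre spectral sequence degenerates over $\Z$ and that the combinatorial recursion on ray partitions, obtained by deleting the maximal entry of $T_K$, matches the cohomological recursion; this compatibility check is the central technical content.
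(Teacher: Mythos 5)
This statement is quoted verbatim from Bianchi--Kranhold \cite[Theorem 3.7]{Vconf}; the paper under review gives no proof of it, so there is nothing internal to compare against, and your proposal has to stand on its own.

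Your main route is viable and the numerology is right. The verticality conditions do cut out a linear $L$ of dimension $pr+q|K|$, the diagonal $z_i^j=z_{i'}^{j'}$ has codimension $q$ or $p+q$ in $L$ according to whether $i=i'$, and your computation $\mathrm{codim}_L F_Q=p(r-a(Q))+q(|K|-\ell(Q))$ is correct (the $\R^p$-degrees of freedom are counted by connected components of the cluster--block incidence graph, which is what $a(Q)$ computes). What you should make explicit, rather than invoking a ``rank,'' is that for $q\geq 1$ the intersection poset of this arrangement is exactly the partition lattice $\Pi_{T_K}$, and the Goresky--MacPherson summand attached to $F_Q$ is $\tilde H_*(\Delta(\hat 0,Q))$, which for an interval of the partition lattice is free of rank $\prod_\beta(|Q_\beta|-1)!$ and concentrated in a single dimension. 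That single fact simultaneously (a) justifies the shift from $\mathrm{codim}$ down by $|K|-\ell(Q)$ to the stated degree, (b) gives integral freeness (your torsion worry is resolved inside the GM approach, since the formula holds over $\Z$ and the interval homology is free), and (c) produces exactly the ray-partition count, since orderings of each block with prescribed minimum are a standard basis for the top homology of $\Delta(\hat0,Q)$. Without it, ``degree equals codimension minus rank'' is not the Goresky--MacPherson formula, and the matching with ray partitions is unargued. Note also that GM yields an abstract isomorphism, not the explicit classes $u_Q$ (products of Arnol'd-type generators) that Bianchi--Kranhold exhibit, so your version proves a slightly weaker statement than the one quoted.

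The concrete gap is in your fallback route: the map forgetting the last point of the last cluster is \emph{not} a fibration for vertical configuration spaces. The putative fiber is the vertical slice $\{pr=pr(z_r)\}\cong\R^q$ punctured at the remaining points of that cluster \emph{and} at any points of other clusters that happen to lie on the same slice, and the number of such punctures is not locally constant on the base (other clusters may or may not share the first $p$ coordinates). So the Fadell--Neuwirth induction fails as stated, and any inductive argument here has to work around this non-fibration phenomenon --- which is precisely the extra difficulty of vertical configuration spaces over ordinary ones. If you want an explicit integral basis rather than Betti numbers, the cell-decomposition/compactification route is the one to pursue, but it is the bulk of the work, not a remark.
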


%Because the cohomology group is freely generated, we easily get the cohomology group with rational coefficients:

% \begin{corollary}[Universal Coefficient Theorem]
%     $$H^*(V_K(\R^{p,q}; \Q) \cong H^*(V_K(\R^{p,q}; \Z) \otimes_\Z \Q.$$
% \end{corollary}

 %In this section, we prove that the comomology groups with rational coefficients of a specific sequence of vertical configuration spaces has a $\fig\#-$module structure. Furthermore, we show that the dimension of this sequence of $\Q-$vector spaces is bounded by a polynomial of a fixed degree. We conclude this section by showing that such sequence is representation stable.

\subsection{The ranks of the (co)homology groups grow polynomially in $n$}\label{BettiPolyGrowth}

Throughout this section, we let $k, n \in \Z_{>0}$ and define a partition $K = (k, k, \dots,  k)$ such that $|K|=nk$. We set $p \geq 0$ and $q \geq 2$, and write $\widetilde{\cV}_{n}^{k}(\R^{p,q})$ for $\widetilde{\cV}_{K}(\R^{p,q})$. 

In this section, we show that for a fixed  $k$ and $d$, the rank of $H^d(\widetilde{\cV}_{n}^{k}(\R^{p,q}))$ grows polynomially in $n$ as $n \rightarrow \infty$. According to Theorem $\ref{cohomology heorem Bianchi}$, the degree-$d$ cohomology group $H^d(\widetilde{\cV}_{n}^{k}(\R^{p,q}))$ has a basis indexed by ray partitions $Q$ that satisfy
\begin{equation}\label{ray partitions equation}
    d=p(n-a(Q)) + (q-1)(nk-\ell(Q)).
\end{equation}

We first collect two general combinatorial lemmas. We then observe how Equation \ref{ray partitions equation} constrains the possible lengths of the ray partitions in fixed cohomological degree $d$. We use the combinatorial lemmas to bound the number of possible ray partitions of a given length. We deduce that $\text{rank}(H^d(\widetilde{\cV}_{n}^{k}(\R^{p,q})))$ is bounded by a polynomial of degree at most $\left\lfloor \frac{2d}{q-1} \right\rfloor$ when $q \geq 2$.  
%and in $O(n^{\lceil \frac{2d}{p+1} \rceil})$ if $q=2$ and $p\geq 0$.

\begin{notation}
    Let $S(n,t)$ denote the Stirling number of the second kind, that is, the number of ways to partition a set of $n$ objects into $t$ nonempty unordered subsets. 
\end{notation}   

See (for example) \cite[Chapter 1.3]{Stirling} for the combinatorial theory of Stirling numbers. Crucially, they satisfy the following defining combinatorial identity. 

\begin{prop} \label{combinatorial properties stirling number} For $n,t \in \Z_{>0}$, $$ S(n, t) = tS(n-1,t) + S(n-1, t-1) \quad \text{for $n \geq 1$. }$$ 
\end{prop}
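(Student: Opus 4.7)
The plan is to prove this classical recurrence by a direct combinatorial (double-counting) argument, partitioning the set of all partitions of $\{1,2,\dots,n\}$ into $t$ nonempty unordered blocks according to the fate of the distinguished element $n$. By the definition of $S(n,t)$, the left-hand side counts precisely these partitions, so it suffices to exhibit a bijection between this set and the disjoint union of two sets of sizes $tS(n-1,t)$ and $S(n-1,t-1)$ respectively.

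First, I would condition on whether $\{n\}$ is itself one of the $t$ blocks. In the case that $\{n\}$ is a block, removing it leaves a partition of $\{1,\dots,n-1\}$ into $t-1$ nonempty unordered blocks, and conversely any such partition of $\{1,\dots,n-1\}$ can be augmented by the singleton $\{n\}$ to yield a partition of $\{1,\dots,n\}$ into $t$ blocks in which $\{n\}$ is a block. This correspondence is a bijection, contributing the summand $S(n-1,t-1)$.

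In the remaining case $n$ belongs to a block of size at least $2$. Deleting $n$ from its block yields a partition of $\{1,\dots,n-1\}$ into exactly $t$ nonempty unordered blocks, together with a distinguished choice of one of these $t$ blocks (namely, the block to which $n$ will be reattached). Conversely, from any partition of $\{1,\dots,n-1\}$ into $t$ nonempty blocks and any choice of one of its $t$ blocks, we recover a unique partition of $\{1,\dots,n\}$ of the desired form by inserting $n$ into the chosen block. This bijection contributes the summand $tS(n-1,t)$. Summing the two cases gives $S(n,t)=tS(n-1,t)+S(n-1,t-1)$.

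There is no real obstacle here: the entire content is the case analysis on whether $n$ is a singleton in its partition, and both bijections are immediate from the definitions. The only mild care needed is in edge cases (for instance $t=1$ or $t>n$), but these are consistent with the standard conventions $S(n,0)=\delta_{n,0}$, $S(0,t)=\delta_{0,t}$, and $S(n,t)=0$ for $t>n$, under which the recurrence holds trivially at the boundary.
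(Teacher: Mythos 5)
Your proof is correct and complete: the case analysis on whether $n$ forms a singleton block, together with the two bijections you describe, is exactly the standard double-counting argument for this recurrence. The paper itself does not prove this proposition but simply cites the literature on Stirling numbers of the second kind, and the argument you give is precisely the one found there, so there is nothing to compare beyond noting that your write-up supplies the proof the paper omits.
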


\begin{prop}\label{stirling number theorem}
   For fixed $t$, the sequence $\left(S(n,n-t)\right)_{n>t}$ is polynomial in $n$ of degree $2t$. 
\end{prop}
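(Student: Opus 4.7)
The plan is to proceed by induction on $t$, using the Stirling recurrence from Proposition \ref{combinatorial properties stirling number}. Set $f_t(n) := S(n, n-t)$, so the claim becomes: for each fixed $t \geq 0$, the sequence $(f_t(n))_{n>t}$ agrees with a polynomial in $n$ of degree exactly $2t$.

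The base case $t=0$ is immediate since $f_0(n) = S(n,n) = 1$, a polynomial of degree $0 = 2\cdot 0$. For the inductive step, substitute $n-t$ for $t$ in the recurrence of Proposition \ref{combinatorial properties stirling number} to obtain
\[
S(n, n-t) \;=\; (n-t)\, S(n-1, n-t) \;+\; S(n-1, n-t-1),
\]
which in the $f_t$ notation reads
\[
f_t(n) \;=\; (n-t)\, f_{t-1}(n-1) \;+\; f_t(n-1),
\]
equivalently $f_t(n) - f_t(n-1) = (n-t)\, f_{t-1}(n-1)$. By the inductive hypothesis, $f_{t-1}(n-1)$ is a polynomial in $n$ of degree $2t-2$, so the right-hand side is a polynomial in $n$ of degree $2t-1$.

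The main step is then the standard fact that a sequence whose successive differences are given by a polynomial of degree $d$ is itself (for $n$ beyond any finite list of exceptional values) a polynomial of degree $d+1$: one can write down an explicit antidifference $Q(n)$ of degree $2t$ (for instance via the basis of binomial coefficients $\binom{n}{j}$, whose first differences are $\binom{n-1}{j-1}$), and then $f_t(n) - Q(n)$ is constant for $n > t$, hence $f_t(n)$ agrees with a polynomial of degree at most $2t$ on $n > t$.

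The only remaining obstacle, and the part requiring care, is verifying that the degree is exactly $2t$ rather than strictly smaller. I would track the leading coefficient through the induction: if $f_{t-1}(n) = c_{t-1} n^{2t-2} + \text{(lower)}$ with $c_{t-1} > 0$, then $(n-t) f_{t-1}(n-1)$ has leading term $c_{t-1} n^{2t-1}$, and antidifferencing a monic degree-$(2t-1)$ polynomial produces a degree-$2t$ polynomial with leading coefficient $\tfrac{c_{t-1}}{2t}$, which is strictly positive. Thus $c_t = \tfrac{c_{t-1}}{2t} > 0$, and by induction the degree of $f_t$ is exactly $2t$, completing the proof.
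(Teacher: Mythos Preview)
Your proof is correct and follows essentially the same approach as the paper's: induction on $t$, base case $S(n,n)=1$, and in the inductive step the same application of the Stirling recurrence to express $f_t(n)-f_t(n-1)$ as $(n-t)f_{t-1}(n-1)$, then the standard ``antidifference of a degree-$d$ polynomial has degree $d+1$'' argument. You go slightly further than the paper by explicitly tracking the leading coefficient to confirm the degree is exactly $2t$, which the paper simply asserts.
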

\begin{proof}
    We proceed by induction on $t$. When $t = 0$, $S(n, n) = 1$ for any $n > t$. 
    %by Proposition $\ref{combinatorial properties stirling number}$, $$S(n, n) = 1 , \qquad S(n, n-1) = \binom{n}{2} = \frac12(n^2-n).$$   
    Suppose now for some fixed $t\geq 0$ that $(S(n,n-t))_{n > t}$ is polynomial in $n$ of degree $2t$. We want to show that the sequence $s_n:=S(n,n-(t+1))$ is polynomial in $n$ of degree $2t+2$.   Proposition \ref{combinatorial properties stirling number} states that, for $n \geq 1$ and $n-t-1 \geq 1$, 
  $$  S(n,n-t-1) = (n-t-1)S(n-1, n-t-1) + S(n-1,n-t-2). $$
  In other words, the differences $s_n - s_{n-1}$ satisfy
$$ s_n - s_{n-1} = (n-t-1)S(n-1, n-t-1).$$
Let $p(n) = (n-t-1)S(n-1, n-t-1)$. Since $S(n-1, n-t-1)$ is polynomial in of degree $2t$ by the induction hypothesis, $p(n)$ is polynomial of degree $2t+1$. Then $s_n = \sum_{i=t+2}^n p(i)$ is a polynomial of degree $2t+2$.
    %%
    %%By repeatedly applying Proposition $\ref{combinatorial properties stirling number}$ $(n-t-2)$ times,  we find
   %% \begin{align*}
  %%  S(n,n-t-1) &= (n-t-1)S(n-1, n-t-1) + S(n-1,n-t-2) \\
  %%  &= (n-t-1)S(n-1, n-t-1) + (n-t-2)S(n-2,n-t-2) + S(n-2,n-t-3) \\
  %% % &= (n-t-1)S(n-1, n-t-1) + (n-t-2)S(n-2,n-t-2)  \\ & \qquad + (n-t-3)S(n-3,n-t-3) + S(n-3,n-t-4) \\
  %%  & \vdots \\ 
  %%  & = \sum_{i=1}^{n-t-2} (n-t-i)S(n-i, n-t-i) + S(t+2, 1) \\ 
  %%  & = \sum_{i=1}^{n-t-2} (n-t-i)S(n-i, n-t-i) + (t+2) 
  %%   \end{align*} 
%% By the inductive hypothesis, the terms $(n-t-i)S(n-i, n-t-i)$ grow polynomially in $n$ of degree $2t+1$. This concludes the inductive step.     
\end{proof}

We also make the following elementary observation: 
\begin{lemma} \label{LemmaFactorial}
    For nonnegative integers $a_1, a_2, \dots, a_p$, 
    $$ (a_1 + a_2 +  \dots + a_p)! \geq a_1 ! a_2! \dots a_p! .$$
\end{lemma}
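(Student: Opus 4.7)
The plan is to reduce the inequality to the well-known fact that multinomial coefficients are positive integers. Specifically, I would observe that the quantity
$$\binom{a_1 + a_2 + \dots + a_p}{a_1, a_2, \dots, a_p} = \frac{(a_1 + a_2 + \dots + a_p)!}{a_1! \, a_2! \cdots a_p!}$$
counts the number of ways to partition a set of $(a_1 + \dots + a_p)$ distinguishable objects into ordered blocks of sizes $a_1, \dots, a_p$, and therefore is a nonnegative integer. In fact it is at least $1$ (the identity arrangement always works, or one can note that the empty partition when some $a_i = 0$ still corresponds to a valid arrangement). Multiplying through by $a_1! \cdots a_p!$ gives the desired inequality.

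If one prefers an argument avoiding the combinatorial interpretation, I would instead proceed by induction on $p$. The base case $p=1$ is the equality $a_1! = a_1!$. For the inductive step, it suffices to handle $p=2$, i.e., to show $(a+b)! \geq a!\, b!$ for nonnegative integers $a,b$; this follows because $\binom{a+b}{a} = \tfrac{(a+b)!}{a!\,b!}$ is a positive integer (a standard fact provable by Pascal's identity). Given this, one applies the $p=2$ case to $a_1$ and $(a_2 + \dots + a_p)$, and then invokes the inductive hypothesis on $(a_2 + \dots + a_p)! \geq a_2! \cdots a_p!$.

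There is no real obstacle here; the only ``subtlety'' is handling the case when some $a_i$ vanish, which is trivial since $0! = 1$ and omitting a factor of $1$ from either side does not affect the inequality. I would present the multinomial-coefficient proof, as it is the shortest and most transparent.
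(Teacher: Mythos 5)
Your proof is correct. The paper states this lemma as an ``elementary observation'' and omits the proof entirely; your multinomial-coefficient argument (that $\tfrac{(a_1+\dots+a_p)!}{a_1!\cdots a_p!}$ is a positive integer) is the standard justification and is surely what the authors had in mind.
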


The following lemma gives a bound on the lengths of a ray partition $Q$ associated to a cohomology generator in cohomological degree $d$.  

\begin{lemma} \label{LemmaLengthBound}
   Suppose $q \geq 2$.  Then
    $$\left(nk - \frac{d}{q-1}\right) \leq \ell(Q) \leq  n k.$$
In particular, ray partitions $Q$ that index a cohomology basis element in fixed cohomological degree $d$ admit at most $\left( 1+ \left\lfloor \frac{d}{q-1} \right\rfloor \right)$ possible distinct values of $\ell(Q)$, a number that does not depend on $n$. 
\end{lemma}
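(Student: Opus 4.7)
The plan is to read off both inequalities directly from Bianchi--Kranhold's cohomological degree formula (Theorem \ref{cohomology heorem Bianchi}) specialized to $K=(k,k,\dots,k)$ with $|K|=nk$. In this specialization there are $r=n$ clusters, and for any ray partition $Q$ indexing a basis element of $H^d\bigl(\widetilde{\cV}_n^k(\R^{p,q})\bigr)$ we have
\[
    d = p(n-a(Q)) + (q-1)(nk - \ell(Q)).
\]
The upper bound $\ell(Q) \leq nk$ is immediate from the definition of a ray partition (Definition \ref{DefnRayPartition}), which requires $1 \leq \ell(Q) \leq |K|$.

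For the lower bound, I would observe that each of the two summands on the right side of the degree formula is nonnegative: $p \geq 0$ by hypothesis, and $a(Q) \leq r = n$ so $p(n-a(Q)) \geq 0$; while $(q-1) \geq 1$ and $\ell(Q) \leq nk$ so $(q-1)(nk-\ell(Q)) \geq 0$. Dropping the first nonnegative term yields
\[
    d \;\geq\; (q-1)(nk - \ell(Q)),
\]
and since $q-1 \geq 1 > 0$, rearranging gives $nk - \ell(Q) \leq \tfrac{d}{q-1}$, i.e., $\ell(Q) \geq nk - \tfrac{d}{q-1}$.

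For the final assertion on the count, I would note that $\ell(Q)$ is an integer lying in the interval $\bigl[\,nk - \tfrac{d}{q-1},\; nk\,\bigr]$, whose length is $\tfrac{d}{q-1}$. Hence the number of integer values $\ell(Q)$ can attain is at most $\left\lfloor \tfrac{d}{q-1}\right\rfloor + 1 \leq 1 + \tfrac{d}{q-1}$, a bound that is manifestly independent of $n$. There is no real obstacle here; the whole lemma is essentially an algebraic rearrangement of Bianchi--Kranhold's degree formula together with the nonnegativity of $p(n-a(Q))$. The only mildly subtle point is remembering that we are in the regime $q \geq 2$, which is exactly what ensures $q-1 > 0$ and allows division without flipping the inequality.
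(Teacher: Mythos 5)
Your proposal is correct and follows essentially the same argument as the paper: the upper bound comes from the definition of $\ell(Q)$, and the lower bound comes from dropping the nonnegative term $p(n-a(Q))$ in the Bianchi--Kranhold degree formula and dividing by $q-1>0$. Your added justification that $a(Q)\leq r=n$ (so the dropped term really is nonnegative) and the explicit integer count for the final assertion are fine elaborations of what the paper leaves implicit.
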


\begin{proof}
    The right hand side is clear by the definition of $\ell(Q)$. The left hand side follows from the inequality
    $$ d \quad = \quad (q-1)(nk - \ell(Q)) + p(n-a(Q))  \quad \geq \quad (q-1)(nk - \ell(Q))$$ 
    since the term $p(n-a(Q))$ is nonnegative. 
\end{proof}

We now bound the number of ray partitions of a given length. 

\begin{lemma} \label{NumRay}
Fix an integer $t$ with $0 \leq t \leq \left\lfloor \frac{d}{q-1} \right\rfloor$.  As $n$ tends to infinity,  the number of ray partitions $Q$ of length $\ell(Q) =  nk -t$ is polynomial in $n$ of degree at most $2t$.  
\end{lemma}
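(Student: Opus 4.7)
The plan is to recognize each ray partition as a permutation through its cycle decomposition, thereby reducing the count to a signless Stirling number of the first kind, whose polynomial growth in the size of the underlying set is classical and provable by an induction paralleling Proposition \ref{stirling number theorem}.

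First, I would establish a bijection between ray partitions of $T_K$ of length $\ell$ and permutations of $T_K$ with exactly $\ell$ cycles. Given a ray $(Q_\beta, <_\beta)$, send it to the cyclic permutation on $Q_\beta$ that maps each element to its $<_\beta$-successor and the $<_\beta$-maximum back to the $<_\beta$-minimum. Since condition (ii) of Definition \ref{DefnRayPartition} forces $<_\beta$ to begin at the global minimum of $Q_\beta$, and since each cycle admits a unique representation beginning at its least element, this rule produces a bijection between linear orderings on $Q_\beta$ starting at $\min Q_\beta$ and cyclic orderings on $Q_\beta$. Condition (i) ensures that the indexing of the pieces is determined by the underlying set partition, so it carries no additional data. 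Therefore, with $|T_K| = nk$, the number of ray partitions of length $\ell$ equals the signless Stirling number of the first kind $c(nk, \ell)$.

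Next, I would show that for each fixed $t \geq 0$, the sequence $\bigl(c(N, N - t)\bigr)_{N > t}$ is polynomial in $N$ of degree $2t$, by an induction on $t$ closely modelled on Proposition \ref{stirling number theorem}. The base case $c(N, N) = 1$ has degree $0$. For the inductive step, the recurrence $c(N, N - t - 1) = c(N - 1, N - t - 2) + (N - 1)\, c(N - 1, N - t - 1)$ shows that the forward difference $c(N, N - t - 1) - c(N - 1, N - t - 2)$ equals $(N - 1)\, c(N - 1, (N - 1) - t)$, which is polynomial in $N$ of degree $2t + 1$ by the inductive hypothesis. Summing these differences yields a polynomial of degree $2t + 2 = 2(t + 1)$.

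Finally, setting $\ell = nk - t$ and composing with the linear substitution $N = nk$ (with $k$ fixed) produces a polynomial in $n$ of degree at most $2t$, as claimed. The main step is identifying ray partitions with permutations; once this bijection is in place, the remaining Stirling-number induction and the substitution $N = nk$ are routine, and directly parallel arguments already carried out in the excerpt.
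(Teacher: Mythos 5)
Your proposal is correct, and it takes a genuinely different (and in one respect stronger) route than the paper. The paper's proof first counts the underlying set partitions, of which there are $S(nk,nk-t)$, and then crudely bounds the number of admissible total orders on the blocks by $\prod_\beta(|Q_\beta|-1)! \leq t!$ via Lemma \ref{LemmaFactorial}, concluding with the polynomiality of $S(N,N-t)$ from Proposition \ref{stirling number theorem}; strictly speaking this yields an upper bound by a polynomial of degree $2t$, which is all that is needed downstream. You instead observe that a block together with a linear order starting at its global minimum is the same data as a cycle on that block (each cycle having a unique writing beginning with its least element, and conditions (i)--(ii) of Definition \ref{DefnRayPartition} contributing no further data), so the ray partitions of length $\ell$ are in bijection with permutations of $T_K$ having exactly $\ell$ cycles; the count is therefore exactly the signless Stirling number of the first kind $c(nk,nk-t)$. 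Your induction on $t$ using the recurrence $c(N,j)=c(N-1,j-1)+(N-1)c(N-1,j)$ is a correct, direct analogue of Proposition \ref{stirling number theorem}, and the substitution $N=nk$ is harmless since $k$ is fixed. What your approach buys is an exact closed-form count and a proof of the literal statement of the lemma (the number of ray partitions \emph{is} eventually a polynomial in $n$, of degree exactly $2t$ when $k\geq 1$), rather than only an upper bound; what the paper's approach buys is that it reuses the already-proved Proposition \ref{stirling number theorem} verbatim instead of requiring a second, parallel induction for Stirling numbers of the first kind.
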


\begin{proof} There are $S(nk, nk -t)$ possible partitions of the table $T_K$ (Definition \ref{DefnTable}) of length $\ell = nk -t$. Denote the parts $Q_1, \dots, Q_{\ell}$ according to indexing convention (i) in the definition of a ray partition (Definition \ref{DefnRayPartition}). By Definition \ref{DefnRayPartition}, then, there is an associated ray partition 
 $Q$ for each choice of total order on each part $Q_i$, each subject to a specified minimal element. The number of such choices of total orderings is
 \begin{align*}
    &   (|Q_1| - 1)! (|Q_2| - 1)! \dots (|Q_{\ell}|-1)! \\ 
    & \leq \left( \sum_i (|Q_i| - 1) \right)!  \quad \text{ by Lemma \ref{LemmaFactorial} }  \\ 
    & = (n k - \ell) ! \\ 
    & = t! .
 \end{align*}
 Since $t!$ is constant in $n$, the result follows from Lemma \ref{stirling number theorem}. 
\end{proof}

We can now prove the main theorem of this section.
\begin{theorem}\label{polynomial bound on homology dimension}
    Let $p \geq 0$ and $q \geq 2$. For a fixed degree $d$, as $n$ tends to infinity, the rank of $H^d(\widetilde{\cV}_{n}^{k}(\R^{p,q});\Z)$ is polynomial  in $n$ of degree at most $\left\lfloor \frac{2d}{q-1}\right\rfloor$.
\end{theorem}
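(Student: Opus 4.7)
The plan is to combine the Bianchi--Kranhold computation (Theorem \ref{cohomology heorem Bianchi}) with the combinatorial bounds developed in Lemmas \ref{LemmaLengthBound} and \ref{NumRay}. Under the hypotheses, Theorem \ref{cohomology heorem Bianchi} identifies $\mathrm{rank}\, H^d(\widetilde{\cV}^k_n(\R^{p,q});\Z)$ with the number of ray partitions $Q$ of the table $T_K$, for $K = (k, k, \ldots, k)$ with $n$ entries, satisfying the Diophantine constraint
\[ p(n - a(Q)) + (q-1)(nk - \ell(Q)) = d. \]

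First I would reparametrize by setting $t := nk - \ell(Q)$, so that each contributing $Q$ is organized by its value of $t$. By Lemma \ref{LemmaLengthBound}, the admissible values of $t$ lie in the finite range $\{0, 1, \ldots, \lfloor d/(q-1) \rfloor\}$, independent of $n$. For each such $t$, the constraint forces either $t = d/(q-1)$ (in the case $p = 0$, with agility unconstrained) or $a(Q) = n - (d - (q-1)t)/p$ (in the case $p > 0$, requiring integrality and nonnegativity of the right-hand side); either way, the contributing ray partitions form a subset of those of length $nk - t$. By Lemma \ref{NumRay}, the total number of ray partitions of length $nk - t$ is bounded in $n$ by a polynomial of degree at most $2t$. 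Summing these bounds over the finitely many admissible $t$ produces a polynomial upper bound of degree at most $2 \lfloor d/(q-1) \rfloor \leq \lfloor 2d/(q-1) \rfloor$.

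The main obstacle is upgrading this polynomial upper bound to the stronger conclusion that $\mathrm{rank}\, H^d(\widetilde{\cV}^k_n(\R^{p,q});\Z)$ is \emph{itself} a polynomial in $n$ of the prescribed degree, for $n$ sufficiently large. The hard part will be refining Lemma \ref{NumRay} so that both the length and agility constraints are simultaneously incorporated. My strategy would be to decompose a ray partition by its \emph{non-trivial support}, the union of those columns of $T_K$ that either contain an element belonging to a part of size greater than one or participate in a non-singleton equivalence class of the agility relation. The size of this support is uniformly bounded in terms of $t$ and $s = n - a(Q)$, both of which are constants (in $n$) after we fix the Diophantine solution. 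Consequently, the number of ways to embed a fixed combinatorial support type into the $n$ columns of $T_K$ is counted by expressions polynomial in $n$ of the form $\binom{n}{j}$, with $j$ bounded by $2t$. Assembling these refined counts over the finitely many admissible $(t, a(Q))$ then yields the desired polynomial conclusion, with the degree bound inherited from the length-based upper bound already obtained.
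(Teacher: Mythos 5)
Your first paragraph is precisely the paper's argument: the published proof of this theorem consists of the single sentence ``The result follows from Lemmas \ref{LemmaLengthBound} and \ref{NumRay},'' and your reduction of the Diophantine constraint to finitely many admissible values of $t = nk - \ell(Q)$, followed by the degree-$2t$ bound from Lemma \ref{NumRay}, is exactly how those lemmas are meant to combine. The subtlety you flag in your second half --- that this argument only bounds the rank above by a polynomial, rather than showing it \emph{is} one, since the count is restricted by the agility condition and since Lemma \ref{NumRay} itself is proved only as an upper bound --- is a genuine imprecision in the theorem's wording, but the paper does not resolve it either, and it is immaterial downstream: the theorem is used only to bound the generation degree of the $\FI_{S_k}\sharp$-modules $H^d(\widetilde{\cV}^k_n(\R^{p,q});R)$ via Corollary \ref{stabilization range}, and once the $\fig\sharp$-structure is in hand, a polynomial \emph{upper bound} of degree $D$ on the ranks already forces $T_m = 0$ for $m > D$ in the decomposition $\bigoplus_m M(T_m)$, whence the rank equals $\sum_{m \le D} \binom{n}{m}\,\mathrm{rank}(T_m)$ and exact polynomiality follows for free. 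So your proposed refinement of Lemma \ref{NumRay} via the ``non-trivial support'' of a ray partition, while a plausible route to a purely combinatorial proof of exact polynomiality, is both unnecessary for the paper's purposes and left as a sketch; if you wanted the literal statement without invoking the module structure, you would indeed need to carry out something like that count of embeddings of bounded support types, including the agility stratification in the case $p > 0$.
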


\begin{proof} The result follows from Lemmas \ref{LemmaLengthBound} and \ref{NumRay}. 
\end{proof}

Since integral cohomology groups $H^d(\widetilde{\cV}^k_n (\R^{p,q});\Z)$ are finitely generated and free for all $d$ (Theorem \ref{cohomology heorem Bianchi}), the universal coefficient theorems imply the following.

\begin{corollary}\label{bounding homology}
    Let $p \geq 0$ and $q \geq 2$. For a fixed degree $d$, as $n$ tends to infinity, $\text{rank}(H_d(\widetilde{\cV}_{n}^{k}(\R^{p,q}); \Z))$ is polynomial in $n$ of degree at most $\left\lfloor \frac{2d}{q-1}\right\rfloor$. 
\end{corollary}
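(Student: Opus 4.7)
The plan is to deduce the corollary as a direct consequence of Theorem \ref{polynomial bound on homology dimension} via two standard applications of the universal coefficient theorem. The substantive combinatorics is already finished in the preceding theorem: by Theorem \ref{cohomology heorem Bianchi}, each integral cohomology group $H^d(\widetilde{\cV}_n^k(\R^{p,q}); \Z)$ is free abelian, and its rank is polynomial in $n$ of degree at most $\left\lfloor 2d/(q-1)\right\rfloor$.

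First I would convert from integral cohomology to rational cohomology. Since $\Q$ is flat over $\Z$, the universal coefficient theorem gives $H^d(\widetilde{\cV}_n^k(\R^{p,q}); \Q) \cong H^d(\widetilde{\cV}_n^k(\R^{p,q}); \Z) \otimes_{\Z} \Q$ (the $\mathrm{Tor}$ term vanishes, as is even clearer from the fact that the integral cohomology is free abelian). In particular the $\Q$-dimension of $H^d(\widetilde{\cV}_n^k(\R^{p,q}); \Q)$ equals the $\Z$-rank of $H^d(\widetilde{\cV}_n^k(\R^{p,q}); \Z)$, and is therefore polynomial in $n$ of the claimed degree.

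Next I would pass from rational cohomology to rational homology. Over the field $\Q$, the universal coefficient theorem for cohomology yields $H^d(X; \Q) \cong \Hom_{\Q}(H_d(X; \Q), \Q)$, so these two $\Q$-vector spaces have the same dimension. Chaining the two isomorphisms gives
\[
\dim_{\Q} H_d(\widetilde{\cV}_n^k(\R^{p,q}); \Q) \;=\; \text{rank}_{\Z}\, H^d(\widetilde{\cV}_n^k(\R^{p,q}); \Z),
\]
and the polynomial bound transfers.

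There is no real obstacle here; all of the difficult combinatorics lives in the estimates on Stirling numbers and ray partitions already established in Theorem \ref{polynomial bound on homology dimension}. The only things to keep straight are flatness of $\Q$ over $\Z$ (to handle the change of coefficients in cohomology) and the duality between homology and cohomology over a field (to swap between $H_d$ and $H^d$), both of which are standard and require no additional hypothesis on the spaces $\widetilde{\cV}_n^k(\R^{p,q})$ beyond those already in play.
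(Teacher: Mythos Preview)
Your argument is correct and matches the paper's approach exactly: the paper simply says ``The universal coefficient theorems imply the following corollary,'' and you have spelled out precisely which two applications of the universal coefficient theorem are meant.
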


\subsection{ The (co)homology groups of vertical configuration spaces form $\fig\sh$-modules} \label{CohomologyFunctorial}

Fix $p \geq 0$, $q \geq 2$, $k \in \Z^+$, and $K=(k,\dots, k)$ with $|K|=nk$. Let $\widetilde{\cV}_{n}^{k}(\R^{p,q})$ denote the ordered vertical configuration space of $n$ clusters with $k$ points in each.

 Let $G=S_k$.  Our goal is to verify that, for a fixed (co)homology degree $d$, the (co)homology groups of $\widetilde{\cV}^{k}_{n}(\R^{p,q})$ admit an $\fig\sh$-module structure. Our construction closely parallels Church--Ellenberg--Farb \cite[Proposition 6.4.2]{CEF}. We will view the objects of $\fig$ as sets of the form $A \times [k]$ for $A$ a finite set, as discussed in Remark \ref{RemarkDefnFIG}, and analogously view objects of $\fig\sh$ as based sets $(A\times [k]) \cup \{\bp\}$. 
 We first adapt some definitions introduced in \cite{CEF} to $\fig$-modules. 
\begin{definition}
    An \emph{$\fig$-space} is a covariant functor $X$ from $\fig$ to the category  $\text{Top}$ of topological spaces. A \emph{co-$\fig$-space} is a contravariant functor $X$ from  $\fig$ to  $\text{Top}$. \emph{$\fig\sh$-spaces} and \emph{co-$\fig\sh$-spaces} are defined analogously. 
\end{definition}
\begin{definition}\label{homotopy fig-space}
    A $\emph{homotopy $\fig$-space}$ is a functor $X$ from $\fig$ to hTop, the category of topological spaces and homotopy classes of continuous maps. $\emph{Homotopy $\fig\sh$-spaces}$ are defined analogously.  
\end{definition}
Since a homotopy class of maps induces a well-defined map on homology and cohomology, the sequences of singular homology and cohomology groups of a homotopy $\fig\sh-$space form $\fig\sh$-modules.

\begin{notation}
    Let $B$ be a finite set. Write
    $$T_B^k:=\{(b,j)| \ \text{$b\in B$ and $1\leq j\leq k$}\} = B \times [k]$$
\end{notation}    
In other words, $T_B^k$ is the object of $\fig$ associated to the finite set $B$. This notation is suggestive of Definition \ref{DefnTable}, as $T_{(k, \dots, k)}$ is the table $T^{k}_B$  for $B=[n]$. 

\begin{definition} \label{CategorifyingVer}
Let $\pi_p$ be the projection of $\R^{p,q}$ onto the first $p$ coordinates. Define the space
$$\text{Ver}_B^{k}(\R^{p,q}):=\{\rho \ | \ \rho:T_B^{k}\hookrightarrow \R^{p,q} \text{ satisfying } \pi_p(\rho(b,i))=\pi_p(\rho(b,j)) \text{ for } (b,i), (b,j) \in T_{B}^k\}.$$
\end{definition}
A choice of bijection of sets $B \cong [|B|]$ defines an $S_{k}\wr S_{|B|}$-equivariant homeomorphism $\text{Ver}_B^{k}(\R^{p,q}) \cong \widetilde{\cV}_{|B|}^{k}(\R^{p,q})$. The ``categorical" description of the vertical configuration spaces of Definition \ref{CategorifyingVer} enables us to directly define a contravariant action of the category $\fig$, with morphisms acting by precomposition. 

\begin{prop} \label{VerCoFI} $\text{Ver}_{\blacksquare}^{k}(\R^{p,q})$ is a co-$\fig$-space, where an  $\fig$ morphism $f: T^{k}_A \to T^{k}_B$ acts by
\begin{align*}
    f^*: \text{Ver}_{B}^{k}(\R^{p,q}) &\longrightarrow \text{Ver}_{A}^{k}(\R^{p,q})  \\ 
    \rho & \longmapsto \rho \circ f.
\end{align*}
    
\end{prop}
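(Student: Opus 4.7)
The plan is to verify the four conditions required for $\mathrm{Ver}^k_{\blacksquare}(\R^{p,q})$ to be a co-$\fig$-space: (1) for each finite set $B$ the assignment produces a topological space; (2) for each $\fig$-morphism $f: T^k_A \to T^k_B$, the map $f^* = (-\circ f)$ carries $\mathrm{Ver}^k_B(\R^{p,q})$ into $\mathrm{Ver}^k_A(\R^{p,q})$; (3) each such $f^*$ is continuous; (4) the assignment is contravariantly functorial. Conditions (1) and (4) are essentially tautological: $\mathrm{Ver}^k_B(\R^{p,q})$ inherits the subspace topology from the space of maps $T^k_B \to \R^{p,q}$ (which is just a Euclidean space), and contravariant functoriality of precomposition, $(g \circ f)^* = f^* \circ g^*$ and $\mathrm{id}^* = \mathrm{id}$, follows from associativity of function composition.

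The substantive step is (2), where we must check that $\rho \circ f \in \mathrm{Ver}^k_A(\R^{p,q})$ whenever $\rho \in \mathrm{Ver}^k_B(\R^{p,q})$. Injectivity of $\rho \circ f$ is immediate because every $\fig$-morphism is injective by Definition \ref{DefnFIG}, and $\rho$ is injective by Definition \ref{CategorifyingVer}. To verify the verticality condition, fix $a \in A$ and $i,j \in [k]$. By the defining property of an $\fig$-morphism (Definition \ref{DefnFIG}), $f$ sends the fiber $\{a\}\times [k]$ into a single fiber $\{b\}\times [k]$, and does so $S_k$-equivariantly: if $f(a,1) = (b,g)$ for some $g \in S_k$, then $f(a,i) = (b, g\cdot i)$ and $f(a,j) = (b, g\cdot j)$. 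Hence
\[
\pi_p\bigl((\rho \circ f)(a,i)\bigr) = \pi_p\bigl(\rho(b, g\cdot i)\bigr) = \pi_p\bigl(\rho(b, g\cdot j)\bigr) = \pi_p\bigl((\rho \circ f)(a,j)\bigr),
\]
where the middle equality uses that $\rho$ is vertical on the fiber over $b$.

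For continuity (condition (3)), note that $\mathrm{Ver}^k_A(\R^{p,q}) \hookrightarrow (\R^{p,q})^{T^k_A}$ and $\mathrm{Ver}^k_B(\R^{p,q}) \hookrightarrow (\R^{p,q})^{T^k_B}$ are subspace embeddings into finite products of Euclidean spaces, and precomposition by the fixed set map $f: T^k_A \to T^k_B$ is the linear map $(\R^{p,q})^{T^k_B} \to (\R^{p,q})^{T^k_A}$ given by projecting and reindexing coordinates, which is clearly continuous; restricting to the subspaces preserves continuity.

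I do not anticipate a serious obstacle: the only point requiring any care is unpacking the $S_k$-equivariance of an $\fig$-morphism to confirm that it maps fibers to fibers, which is exactly what is needed to preserve verticality under precomposition.
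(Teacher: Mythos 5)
Your proof is correct and takes essentially the same approach as the paper, which states Proposition \ref{VerCoFI} without proof, treating it as immediate from the categorical description in Definition \ref{CategorifyingVer}; the one substantive point---that an $\fig$-morphism carries each fiber $\{a\}\times[k]$ bijectively into a single fiber $\{b\}\times[k]$, so precomposition preserves the verticality condition---is exactly the point you verify. (Only this fiber-preservation property is needed, so your slight conflation of the $X=S_k$ and $X=[k]$ models of the fibers is harmless.)
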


    To extend this co-$\fig$-action to an action of $\fig\sh$, we define stabilization maps that introduce a new cluster of points ``far away." 
\begin{definition} \label{DefnOrderedStabilization}
Fix $p \geq 0, q \geq 1$, and an ordered partition $(k_1, \dots, k_{n}, k_{n+1})$. Let $e_1, \dots e_{p+q}$ denote the standard basis for $\R^{p+q}$, and $pr_{e_1}$ the projection onto the first component. Define a map 
$$\tilde{\phi}: \tilde\cV_{(k_1, \dots, k_{n})}(\R^{p,q}) \to \tilde\cV_{(k_1, \dots, k_{n}, k_{n+1})}(\R^{p,q})$$ as follows. We map an element $(z_1,\dots,z_n) \in \tilde\cV_{(k_1, \dots, k_{n})}(\R^{p,q})$ with clusters $z_i =(z_i^1,\dots,z_i^{k_i})$ to the element $\tilde\phi(z_1,\dots,z_n) =(z_1,\dots,z_n, z_{n+1})$, where the new cluster $z_{n+1}=(z_{n+1}^1,\dots,z_{n+1}^{k_{n+1}})$ consists of the points 
$$ z_{n+1}^t = \left(\max_{\substack{1 \leq i \leq n \\ 1 \leq j \leq k_i}} |pr_{e_1}(z_i^j)| + 1 \right)e_1 + t\,e_{p+q}$$
as illustrated in Figure \ref{OrderedStablization}. 
For the partition $(k, \dots, k)$, we denote the stabilization map 
$$ \tilde{\phi}_n: \tilde\cV^{k}_{n}(\R^{p,q}) \longrightarrow \tilde\cV^{k}_{n+1}(\R^{p,q}).$$ 
\end{definition}

\begin{figure}[h!]  
\begin{center}
    \includegraphics{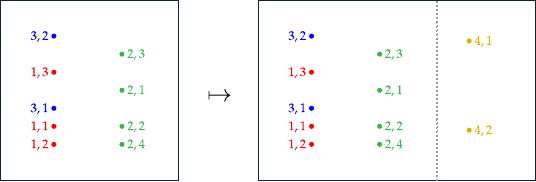}
    \caption{The stabilization map $\tilde\cV_{({\color{red}3},{\color{Green}2},{\color{blue}4})}(\R^{1,1}) \to \tilde\cV_{({\color{red}3},{\color{Green}2},{\color{blue}4}, {\color{brown}2})}(\R^{1,1})$. Figure adapted from Bianchi--Kranhold \cite[Figure 1]{Vconf}.} \label{OrderedStablization}       
        \end{center}
        \end{figure} 

        We observe that, because $q\geq 2$ by assumption,  if we permute the labels on the points in the new cluster $z_{n+1}$, the resulting map would be homotopic (continuously in $\tilde\cV^{k}_{n}(\R^{p,q})$) to $\tilde\phi_n$. Similarly, up to homotopy, we can place the cluster further away in the $e_1$ direction. 

                The stabilization maps $\tilde\phi_n$ are split via the natural forgetful maps $\tilde\phi^n$, 
        \begin{align*}
            \tilde\cV_{(k_1, \dots, k_{n}, k_{n+1})}(\R^{p,q})  & \longrightarrow \tilde\cV_{(k_1, \dots, k_{n})}(\R^{p,q}) &
            \tilde\phi^n:  \tilde\cV^{k}_{n+1}(\R^{p,q}) &\longrightarrow \tilde\cV^{k}_{n}(\R^{p,q}).\\ 
           (z_1,\dots,z_n, z_{n+1}) & \longmapsto (z_1,\dots,z_n)  
           &
            (z_1,\dots,z_n, z_{n+1}) & \longmapsto (z_1,\dots,z_n).
        \end{align*}

        \begin{prop} \label{VerHomotopyFI} $\text{Ver}_{\blacksquare}^{k}(\R^{p,q})$ is a  homotopy $\fig$-space, where the action of (the skeleton of)  $\fig$ is generated by the action of the groups $S_{k}\wr S_n$ and the stabilization maps $ \tilde{\phi}_n: \tilde\cV^{k}_{n}(\R^{p,q}) \to \tilde\cV^{k}_{n+1}(\R^{p,q})$ of Definition \ref{DefnOrderedStabilization}.    
\end{prop}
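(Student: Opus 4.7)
The plan is to invoke the evident hTop analogue of Lemma \ref{fig criterion}. By Proposition \ref{Map Factors Lemma in fig context}, every $\fig$-morphism $\mX \to \nX$ factors as $\omega \circ \iota_{m,n}$ for some $\omega \in S_k \wr S_n$, so it is enough to define $V$ on objects by $V(\nX) = \tilde\cV^{k}_n(\R^{p,q})$, on endomorphisms by the standard $S_k \wr S_n$--action, and on inclusions by $V(\iota_{m,n}) = [\tilde\phi_{m,n}]$, and then check two compatibility conditions: (a) the stabilization maps are strictly $S_k \wr S_n$--equivariant, where $S_k \wr S_n$ is viewed as the subgroup of $S_k \wr S_{n+1}$ that fixes the last cluster pointwise; and (b) the homotopy version of Condition $(*)$: for every $\omega \in \mathrm{stab}(\iota_{m,n}) \cong S_k \wr S_{n-m}$, the composite $\omega \circ \tilde\phi_{m,n}$ is homotopic to $\tilde\phi_{m,n}$ as a continuous map $\tilde\cV^{k}_m(\R^{p,q}) \to \tilde\cV^{k}_n(\R^{p,q})$.

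Part (a) is immediate from Definition \ref{DefnOrderedStabilization}: the placement of the new cluster depends only on $\max_{i,j} |pr_{e_1}(z_i^j)|$, which is $S_k \wr S_n$--invariant, and the labels $1,\dots,k$ of the new cluster are inserted in a canonical way that is unaffected by the action on the first $n$ clusters. For part (b), by iterating (a) and decomposing a general $\omega$ into its action on each of the clusters indexed $m+1, \ldots, n$, it suffices to treat the single-cluster case: given $\sigma \in S_k \cong \mathrm{stab}(\iota_{n,n+1})$, produce a homotopy between $\tilde\phi_n$ and $\sigma \cdot \tilde\phi_n$. Because $q \geq 2$, the ordered configuration space $\Conf_k(\R^q)$ is path-connected, so there is a path $\gamma : [0,1] \to \Conf_k(\R^q)$ from $(e_q, 2e_q, \dots, ke_q)$ to $(\sigma(1)e_q, \dots, \sigma(k)e_q)$.

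Using $\gamma$, define a homotopy $H_t : \tilde\cV^{k}_n(\R^{p,q}) \to \tilde\cV^{k}_{n+1}(\R^{p,q})$ by sending $(z_1,\dots,z_n)$ to the configuration whose first $n$ clusters are $z_1,\dots,z_n$ and whose $(n{+}1)$-st cluster has shared first $p$ coordinates equal to $\bigl(\max_{i,j}|pr_{e_1}(z_i^j)| + 1\bigr)\,e_1$ and last $q$ coordinates given by $\gamma(t)$. Verticality is preserved because all points in the new cluster share the same first $p$ coordinates at every time $t$; disjointness from the remaining clusters is automatic because the shared $e_1$-coordinate strictly exceeds that of any point in $z_1,\dots,z_n$; and joint continuity in $t$ and the input is clear. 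Hence $H_0 = \tilde\phi_n$ and $H_1 = \sigma \cdot \tilde\phi_n$ are homotopic within $\tilde\cV^{k}_{n+1}(\R^{p,q})$, establishing (b).

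The main obstacle in the argument is the verification that the homotopy in (b) stays inside the vertical configuration space; both conditions (verticality of the moving cluster and disjointness from the other clusters) are enforced by the explicit ``far away'' construction, and this is where the hypothesis $q \geq 2$ enters essentially, through path-connectedness of $\Conf_k(\R^q)$. With (a) and (b) in hand, the proof of Lemma \ref{fig criterion} transfers verbatim---replacing $R$-linear maps by homotopy classes of continuous maps---to produce the desired functor $V:\fig \to \mathrm{hTop}$ assembling the spaces $\tilde\cV^{k}_n(\R^{p,q})$ with their wreath-product symmetries and the stabilization maps.
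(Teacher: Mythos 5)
Your overall strategy is the same as the paper's: it likewise deduces the proposition from the homotopy version of Lemma \ref{fig criterion}, first observing that the $\tilde\phi_n$ form a consistent sequence and then checking Condition $(*)$ up to homotopy using $q \geq 2$. The paper leaves the homotopy itself implicit; you supply it explicitly, which is welcome, but your reduction step contains a genuine gap. By Proposition \ref{PropDescribeStabilizer} we have $\mathrm{stab}(\iota_{m,n}) \cong S_k \wr S_{n-m}$, and a general element of this group does \emph{not} ``decompose into its action on each of the clusters indexed $m+1,\dots,n$'': besides the base group $(S_k)^{n-m}$ of within-cluster relabellings, it contains the complement $S_{n-m}$ permuting the added clusters among one another. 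Iterating (a) together with your single-cluster homotopy only disposes of the base group. For instance, for the transposition $\tau$ of clusters $j+1$ and $j+2$ inside $\mathrm{stab}(\iota_{j,j+2})$, one must show $\tau\circ\tilde\phi_{j+1}\circ\tilde\phi_{j} \simeq \tilde\phi_{j+1}\circ\tilde\phi_{j}$; the two maps place the clusters labelled $j+1$ and $j+2$ at $e_1$-distances $M+1,M+2$ and $M+2,M+1$ respectively, and this is a genuinely two-step statement that does not follow from equivariance plus the one-step homotopies.

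The repair uses exactly the idea you already have, applied to the whole added block at once rather than one cluster at a time. Both $\omega\circ\tilde\phi_{m,n}$ and $\tilde\phi_{m,n}$ send $z$ to $z$ together with a fixed labelled configuration of $n-m$ vertical clusters placed in the half-space $\{x : pr_{e_1}(x) > M(z)\}$, where $M(z)=\max_{i,j}|pr_{e_1}(z_i^j)|$ depends continuously on $z$. That half-space is homeomorphic to $\R^{p+q}$ compatibly with the splitting $\R^p\times\R^q$, and $\widetilde{\cV}^k_{n-m}(\R^{p,q})$ is path-connected for $q\geq 2$ (by Theorem \ref{cohomology heorem Bianchi}, the only ray partition in degree $0$ is the discrete one, so $H^0\cong\Z$; alternatively one sees directly that vertical clusters can be slid past one another when $q\geq 2$). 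Choosing a path in the vertical configuration space of the half-space from the standard added block to its image under $\omega$, and inserting it exactly as in your single-cluster homotopy $H_t$, yields the required homotopy for every $\omega\in S_k\wr S_{n-m}$ simultaneously. With that substitution your argument is complete and agrees with the paper's.
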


\begin{proof} By construction, the sequence $\Big(\tilde{\phi}_n: \tilde\cV^{k}_{n}(\R^{p,q}) \to \tilde\cV^{k}_{n+1}(\R^{p,q})\Big)_n$ is a consistent sequence in the sense of Definition \ref{DefnConsistentSequence}. By Proposition \ref{fig criterion}, it suffices to check that, for all $\omega \in \mathrm{stab}(\iota_{m,n}) \cong S_{k} \wr S_{n-m}$, there is equality between $\omega (\iota_{m,n})_* = \omega \tilde{\phi}_{n-1} \cdots \tilde{\phi}_m$ and 
$(\iota_{m,n})_* = \tilde{\phi}_{n-1} \cdots \tilde{\phi}_m$. These are not equal as functions of spaces---we have potentially permuted the labels on the points in the new clusters introduced ``far away"--but because $q \geq 2$, they are homotopic maps. This concludes the proof. 
\end{proof}
 
The co-$\fig$-space structure of Proposition \ref{VerCoFI} and the homotopy $\fig$-space structure of Proposition \ref{VerHomotopyFI} combine to give the spaces $\text{Ver}_{\blacksquare}^{k}(\R^{p,q})$ the structure of a homotopy $\fig\sh$-space. We can verify functoriality directly using the arguments from Church--Ellenberg--Farb \cite[Proof of Proposition 6.4.2]{CEF}.

\begin{theorem} \label{SharpStructure}
$\text{Ver}_{\blacksquare}^{k}(\R^{p,q})$ admits a homotopy $\fig\sh$-space structure with respect to the maps defined in Proposition \ref{VerCoFI} and Proposition \ref{VerHomotopyFI}. 
\end{theorem}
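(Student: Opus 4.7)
The plan is to combine the contravariant action of Proposition \ref{VerCoFI} with the covariant (up to homotopy) action of Proposition \ref{VerHomotopyFI} into a single homotopy $\fig\sh$-structure on $\text{Ver}_{\blacksquare}^k(\R^{p,q})$. The two structures naturally slot into the two generating subcategories of $\fig\sh$ described in Section \ref{figsharp}: the wide subcategory $\fig$ of injective morphisms and the subcategory isomorphic to $\fig^{op}$ whose morphisms collapse the complement of a subset to the basepoint $\bp$. Every $\fig\sh$-morphism $f: A_\bp \to B_\bp$ admits a canonical factorization $f = \iota \circ \pi$, where $Z = f^{-1}(B \times [k]) \subseteq A$, the map $\pi: A_\bp \to Z_\bp$ collapses $A \setminus Z$ to $\bp$ and is the identity on $Z$, and $\iota: Z_\bp \to B_\bp$ is an $\fig$-morphism. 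Using this factorization, define
\[
f_* \;=\; \tilde{\phi}_\pi \circ \iota^* \colon \text{Ver}_B^k(\R^{p,q}) \xrightarrow{\iota^*} \text{Ver}_Z^k(\R^{p,q}) \xrightarrow{\tilde{\phi}_\pi} \text{Ver}_A^k(\R^{p,q}),
\]
where $\iota^*$ is precomposition by $\iota$ (Proposition \ref{VerCoFI}) and $\tilde{\phi}_\pi$ is the composite stabilization map (Proposition \ref{VerHomotopyFI}) introducing the clusters indexed by $A \setminus Z$ ``far away'' in the $e_1$ direction.

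Next, I would verify functoriality in hTop. The identity morphism acts by the identity on the nose. For composition, consider $\fig\sh$-morphisms $f: A_\bp \to B_\bp$ and $g: B_\bp \to C_\bp$ with factorizations through $Z_f \subseteq A$ and $Z_g \subseteq B$. Their composite $g \circ f$ factors canonically through the subset $W = Z_f \cap \iota_f^{-1}(Z_g) \subseteq A$, and I need to show $(g \circ f)_* \simeq f_* \circ g_*$. Unwinding the definitions, this reduces to a single homotopy commutation: the stabilization $\tilde{\phi}_{\pi_g}$ followed by the precomposition $\iota_f^*$ must be rearranged, up to homotopy, as a precomposition followed by an appropriate stabilization. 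Geometrically, this says that ``adding new clusters far away in the $e_1$ direction'' and ``forgetting some existing clusters'' commute up to homotopy, since these two operations act on disjoint sets of clusters.

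The required homotopies are built precisely as in the proof of Proposition \ref{VerHomotopyFI}: because the stabilization map in Definition \ref{DefnOrderedStabilization} depends only on the current maximum of $|\mathrm{pr}_{e_1}|$, any two choices of ``sufficiently far'' positions for the new clusters give homotopic stabilizations, and since $q \geq 2$ any two labelings of the added points within a new cluster are linked by a homotopy through $\tilde{\cV}^k$. A direct combinatorial identification then shows that the composite $f_* \circ g_*$ agrees (up to these homotopies) with the map $\tilde{\phi}_{W \hookrightarrow A} \circ (\iota_g \circ \iota_f|_W)^*$, which is by definition $(g \circ f)_*$. The main obstacle is the combinatorial bookkeeping: tracking which cluster indices are restricted, stabilized, or preserved under the canonical factorizations, and confirming that the proposed homotopies compose consistently. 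This is exactly the verification carried out by Church--Ellenberg--Farb in \cite[Proof of Proposition 6.4.2]{CEF} for ordinary configuration spaces, and the only modifications needed are the presence of the free $S_k$-action within each cluster and the verticality constraint on each cluster, neither of which interferes with the ``far away'' construction of $\tilde{\phi}$.
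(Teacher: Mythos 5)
Your proposal is correct and follows essentially the same route as the paper, which defines the $\fig\sh$-action by combining the co-$\fig$-structure of Proposition \ref{VerCoFI} with the stabilization maps of Proposition \ref{VerHomotopyFI} and then verifies functoriality in $\mathrm{hTop}$ via the argument of Church--Ellenberg--Farb \cite[Proof of Proposition 6.4.2]{CEF}. In fact you supply more detail than the paper does: the canonical collapse-then-inject factorization of an $\fig\sh$-morphism, the identification of the subset $W$ for composites, and the homotopies showing that ``forgetting clusters'' commutes with ``adding clusters far away'' are exactly the content of the cited verification.
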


To conclude the following corollary, recall that $\fig\sharp$ is isomorphic to its own opposite category. 

\begin{corollary} \label{H^d(V)HasFIGStructure}
    Fix $d \geq 0$ and a commutative ring $R$. The sequence of degree-$d$ cohomology groups  $H^d(\widetilde{\cV}^{k}_{n}(\R^{p,q});R)$, and the sequence of degree-$d$ homology groups $H_d(\widetilde{\cV}^{k}_{n}(\R^{p,q}); R)$, form $\fig\sh$-modules.
\end{corollary}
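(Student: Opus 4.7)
The result should follow almost immediately from Theorem \ref{SharpStructure}. The strategy is to observe that singular homology and cohomology are homotopy functors, and so can be composed with the homotopy $\fig\sh$-space structure on $\text{Ver}_{\blacksquare}^{k}(\R^{p,q})$ to produce $\fig\sh$-modules.

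Concretely, I would first recall that $H_d(-; R)$ defines a covariant functor $\text{hTop} \to R\text{-Mod}$. Composing with the homotopy $\fig\sh$-space $\text{Ver}_{\blacksquare}^{k}(\R^{p,q}): \fig\sh \to \text{hTop}$ guaranteed by Theorem \ref{SharpStructure} yields a covariant functor $\fig\sh \to R\text{-Mod}$, which by definition is an $\fig\sh$-module. Upon fixing a bijection $[n] \cong [n]$, the resulting $\fig\sh$-module evaluated on $[n]$ recovers $H_d(\widetilde{\cV}_n^k(\R^{p,q});R)$ with its natural $S_k \wr S_n$-action, since the identification $\text{Ver}_{[n]}^k(\R^{p,q}) \cong \widetilde{\cV}_n^k(\R^{p,q})$ is $(S_k \wr S_n)$-equivariant.

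For cohomology, the argument is analogous but uses the contravariant functor $H^d(-; R)$. The one subtlety is variance: composing the contravariant $H^d$ with a covariant $\fig\sh$-space yields a functor $\fig\sh^{op} \to R\text{-Mod}$. This is reconciled by the canonical self-duality of $\fig\sh$, obtained by reversing partial injections (swapping $(Z, f): A \to B$ for $(f(Z), f^{-1}): B \to A$); pulling back along this anti-equivalence converts the contravariant output into a covariant $\fig\sh$-module. Equivalently, one can directly note that the $\fig\sh$-space structure of Theorem \ref{SharpStructure} is assembled by gluing a contravariant $\fig$-action (Proposition \ref{VerCoFI}) to a covariant one (Proposition \ref{VerHomotopyFI}); in the cohomological setting, the roles of the two pieces simply swap, with restrictions giving the covariant action on $H^d$ and stabilization maps giving the contravariant one.

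There is no essential obstacle here, since Theorem \ref{SharpStructure} has already established the nontrivial functoriality. The remaining verification is purely formal: that $(\text{co})$homology factors through $\text{hTop}$, and that the variance bookkeeping works out through the duality on $\fig\sh$.
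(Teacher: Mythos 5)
Your proposal is correct and takes essentially the same approach as the paper: the corollary is deduced from Theorem \ref{SharpStructure} by composing the homotopy $\fig\sh$-space $\text{Ver}_{\blacksquare}^{k}(\R^{p,q})$ with the homotopy-invariant functors $H_d(-;R)$ and $H^d(-;R)$, exactly as indicated in the remark following Definition \ref{homotopy fig-space}. Your explicit handling of the contravariance of $H^d$ via the self-duality of $\fig\sh$ (reversing partial injections) correctly fills in a bookkeeping point the paper leaves implicit.
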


\subsection{ The (co)homology  of vertical configuration space is representation stable} 

We now prove the main theorem. 

\begin{theorem} \label{mainThm}
     Let $p \geq 0$ and $q \geq 2$. Fix  $k \geq 1$ and (co)homological degree $d \geq 0$. 
\begin{enumerate}[(i)]
    \item \label{MainThmI} Let $R$ be a commutative ring.  The sequences $\left( H_d(\widetilde{\cV}^k_n (\R^{p,q});R) \right)_{n}$ and $\left( H^d(\widetilde{\cV}^k_n (\R^{p,q});R) \right)_{n }$ of $S_{k} \wr S_n$-representations are representation stable in the sense that they are $\FI_{S_k}\sh$-modules finitely generated in degree at most $ \displaystyle \left\lfloor \frac{2d}{q-1} \right\rfloor$. 
    \item \label{CorMultiplicityStability}    The decomposition of the rational $S_k \wr S_n$-representations $H^d(\widetilde{\cV}^k_n (\R^{p,q});\Q)$ into irreducible components $L(\underline{\lambda})_n$ stabilizes in $n$ in the sense of Definition \ref{RepStabDef}, with stable range $ \displaystyle n \geq \left\lfloor \frac{4d}{q-1} \right\rfloor$. The same statement holds for $H_d(\widetilde{\cV}^k_n (\R^{p,q});\Q)$.  
\end{enumerate}
\end{theorem}
\begin{proof}
    By Corollary \ref{H^d(V)HasFIGStructure}, for any fixed $d$,  the sequences of representations $\left( H^d(\widetilde{\cV}^k_n (\R^{p,q});R)\right)_{n\geq 1}$ and $\left( H_d(\widetilde{\cV}^k_n (\R^{p,q});R)\right)_{n\geq 1}$  each have the structure of an $\FI_{S_k}\sh-$module. Recall Theorem \ref{cohomology heorem Bianchi} that the integral cohomology groups $H^d(\widetilde{\cV}^k_n (\R^{p,q});\Z)$ are finitely-generated  free abelian groups in all degrees $d$, so by the universal coefficient theorem, $H^d(\widetilde{\cV}^k_n (\R^{p,q}); R)$ and $H_d(\widetilde{\cV}^k_n (\R^{p,q}); R)$ are also free with the same rank. By Theorem \ref{polynomial bound on homology dimension} and Corollary \ref{bounding homology}, their ranks are polynomial in $n$ of degree at most $\left\lfloor \frac{2d}{q-1} \right\rfloor$. The result now follows from Corollary $\ref{stabilization range}$.
\end{proof}

 Corollary \ref{StructureTheoremCorollary} and 
Corollary \ref{CorCharacterPolynomial} imply the following two consequences of representation stability.

\begin{corollary} \label{CorInducedReps}
    For $i=0, \dots \left\lfloor \frac{2d}{q-1} \right\rfloor$, there exist $S_{k} \wr S_i$-representations $U_i$ such that, as $S_{k} \wr S_n$-representations, 
$$H^d(\widetilde{\cV}^k_n (\R^{p,q});R) \cong \bigoplus_{i= 0}^{\left\lfloor \frac{2d}{q-1} \right\rfloor} \Ind_{(S_{k}\wr S_i)\times (S_{k} \wr S_{n-i})}^{S_{k} \wr S_n}(U_i\boxtimes R) \qquad \text{for all $n$.}$$ 
Here the representation $\Ind_{(S_{k}\wr S_i)\times (S_{k} \wr S_{n-i})}^{S_{k} \wr S_n}(U_i\boxtimes R)$ is understood to be zero whenever $n<i$. The same statement holds for $H_d(\widetilde{\cV}^k_n (\R^{p,q});R) $. 
\end{corollary}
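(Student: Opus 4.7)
The plan is to derive Corollary \ref{CorInducedReps} as a direct structural consequence of the main theorem (Theorem \ref{mainThm}) together with the classification of finitely generated $\FI_{S_k}\sh$-modules. All the substantive work---constructing the $\FI_{S_k}\sh$-module structure on cohomology (Section \ref{CohomologyFunctorial}) and bounding its polynomial growth (Section \ref{BettiPolyGrowth})---has been carried out in the preceding sections, so this step amounts to a formal assembly.

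First I would invoke Theorem \ref{mainThm} to conclude that the sequence $\left(H^d(\widetilde{\cV}^k_n(\R^{p,q});R)\right)_n$ admits the structure of an $\FI_{S_k}\sh$-module that is finitely generated in degree at most $\lfloor 2d/(q-1)\rfloor$. I would then apply Corollary \ref{stabilization range}(i), which packages the structure theorem for $\fig\sh$-modules (Theorem \ref{structure theorem figsharp}): any $\FI_{S_k}\sh$-module generated in degree $\leq m$ is isomorphic, as an $\FI_{S_k}\sh$-module, to a finite direct sum $\bigoplus_{0\leq i\leq m} M(U_i)$ for some $S_k\wr S_i$-representations $U_i$. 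Taking $m=\lfloor 2d/(q-1)\rfloor$ yields an isomorphism
$$H^d(\widetilde{\cV}^k_n(\R^{p,q});R) \;\cong\; \bigoplus_{i=0}^{\lfloor 2d/(q-1)\rfloor} M(U_i)_n$$
of $S_k\wr S_n$-representations, valid for every $n$.

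Finally I would unpack each summand $M(U_i)_n$ via the explicit induced-representation formula of Proposition \ref{M(m)Facts}(\ref{M(T)Induced}),
$$M(U_i)_n \;\cong\; \Ind_{(S_k\wr S_i)\times(S_k\wr S_{n-i})}^{S_k\wr S_n}(U_i\boxtimes R),$$
and substitute this into the preceding display to obtain the stated decomposition. The homology case is handled identically, using the homology half of Theorem \ref{mainThm}.

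There is no real obstacle in this corollary: the entire purpose of erecting the $\FI_{S_k}\sh$-framework in Sections \ref{SectionCategoriesFIG}--\ref{figsharp} was to make this structural statement follow automatically from finite generation, and the quantitative bound on the summation range is precisely the bound on generation degree furnished by Theorem \ref{mainThm}. The only point that warrants a line of verification is that the $U_i$ from Corollary \ref{stabilization range}(i) vanish for $i > \lfloor 2d/(q-1)\rfloor$, which is immediate from the fact that $M(U_i)$ is concentrated in degrees $\geq i$ and from the degree bound on generators.
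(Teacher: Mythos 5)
Your proposal is correct and follows essentially the same route as the paper, which derives the corollary in one line from Corollary \ref{StructureTheoremCorollary} (the structure theorem for $\fig\sh$-modules combined with the induced-representation description of $M(T)_n$) applied to the finitely generated $\FI_{S_k}\sh$-module structure furnished by Theorem \ref{mainThm}. Your explicit unpacking via Corollary \ref{stabilization range}(i) and Proposition \ref{M(m)Facts} is just a slightly more granular restatement of the same argument.
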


%The definition of representation stability implies the following for the rational (co)homology groups. 

%\begin{corollary} %\label{CorMultiplicityStability}
%Fix $d \geq 0$. The decomposition of the $S_k \wr S_n$-representation $H^d(\widetilde{\cV}^k_n (\R^{p,q});\Q)$ into irreducible components $L(\underline{\lambda})_n$ stabilizes in $n$ in the sense of Definition \ref{RepStabDef}, with stable range $ \displaystyle n \geq \left\lfloor \frac{4d}{q-1} \right\rfloor$. The same statement holds for $H_d(\widetilde{\cV}^k_n (\R^{p,q});\Q)$. 
%\end{corollary}

\begin{corollary} \label{CorHomologyCharacterPolynomial} Fix $d \geq 0$. The sequence of characters of the $S_{k} \wr S_n$-representations $H^d(\widetilde{\cV}^k_n (\R^{p,q});\Q)$ can be expressed as a single character polynomial in the coloured cycle-counting functions
    $$P^{H^d(\widetilde{\cV}^k(\R^{p,q}))} \in \mathbb{Q}[\; X_r^{C_i} \; | \; r \geq 1, \; \text{$C_i$ a conjugacy class of $S_{k} $ }] $$ of degree at most  $\left\lfloor \frac{2d}{q-1} \right\rfloor$, independent of $n$. The same statement holds for the sequence of characters of  $H_d(\widetilde{\cV}^k_n (\R^{p,q});\Q)$. 
\end{corollary}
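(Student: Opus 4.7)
The plan is to deduce Corollary \ref{CorHomologyCharacterPolynomial} as an almost immediate consequence of Theorem \ref{mainThm} combined with the general character-polynomial machinery for $\fig\sh$-modules developed in Section \ref{SectionCharacterPolynomials}. Since all the heavy lifting has already been done, this will be a short verification-of-hypotheses argument rather than a new calculation.

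First, I would recall from Corollary \ref{H^d(V)HasFIGStructure} that the sequences $\bigl(H^d(\widetilde{\cV}^k_n(\R^{p,q});\Q)\bigr)_n$ and $\bigl(H_d(\widetilde{\cV}^k_n(\R^{p,q});\Q)\bigr)_n$ have the structure of $\FI_{S_k}\sh$-modules over $\Q$. The field $\Q$ is a splitting field of characteristic zero for the finite group $S_k$ (as noted in the remark preceding Corollary \ref{ClassificationWreathIrreps}), so the coefficient hypothesis of Corollary \ref{CorCharacterPolynomial} is satisfied with $G=S_k$.

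Next, I would verify the two input hypotheses to Corollary \ref{CorCharacterPolynomial}: finite-dimensionality of each level and a bound on the degree of generation as an $\FI_{S_k}$-module. Finite-dimensionality of $H^d(\widetilde{\cV}^k_n(\R^{p,q});\Q)$ (respectively $H_d$) for each $n$ is immediate from Theorem \ref{polynomial bound on homology dimension} together with the universal coefficient theorem (indeed each level has finite $\Q$-dimension, bounded by a polynomial in $n$). The bound on the generation degree is exactly the content of Theorem \ref{mainThm}: both $\FI_{S_k}\sh$-modules are finitely generated in degree at most $\lfloor 2d/(q-1)\rfloor$.

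With both hypotheses in hand, Corollary \ref{CorCharacterPolynomial} applies directly and produces a polynomial
\[
P^{H^d(\widetilde{\cV}^k_n(\R^{p,q}))} \in \Q[\,X_r^{C_i} \mid r\geq 1,\ C_i \text{ a conjugacy class of } S_k\,]
\]
of degree at most $\lfloor 2d/(q-1)\rfloor$, independent of $n$, whose evaluation on any element of $S_k\wr S_n$ gives the character $\chi^{H^d(\widetilde{\cV}^k_n(\R^{p,q}))}$ at that element. The identical argument applied to the homology $\FI_{S_k}\sh$-module yields the corresponding character polynomial for $H_d$.

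I do not expect a genuine obstacle here: the corollary is essentially the specialization of Corollary \ref{CorCharacterPolynomial} to the vertical configuration space setting, and the only thing to double-check is that the degree bound transported through Corollary \ref{CorCharacterPolynomial} is exactly the generation-degree bound from Theorem \ref{mainThm}, namely $\lfloor 2d/(q-1)\rfloor$ rather than the stable range $\lfloor 4d/(q-1)\rfloor$ appearing in Corollary \ref{CorMultiplicityStability}. This distinction is important because (unlike the case of multiplicity stability) the $\fig\sh$-structure makes the character polynomial description valid for \emph{all} $n$, not merely $n$ past a stable range, so the bound inherited is the smaller one.
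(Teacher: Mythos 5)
Your proposal is correct and follows exactly the paper's argument: the paper deduces this corollary directly from Corollary \ref{CorCharacterPolynomial} applied to the $\FI_{S_k}\sharp$-module structure of Corollary \ref{H^d(V)HasFIGStructure}, with the generation-degree bound $\lfloor 2d/(q-1)\rfloor$ supplied by Theorem \ref{mainThm}. Your explicit verification of the finite-dimensionality and splitting-field hypotheses, and your remark distinguishing the generation degree from the stable range, are accurate elaborations of the same route.
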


\begin{remark}
    The range in Theorem \ref{mainThm} (\ref{CorMultiplicityStability}) may not be sharp in general. In the case $p = 0$ and $k = 1$, which is the ordinary ordered configuration space $\Conf_n (\R^q)$, Theorem \ref{mainThm} gives us the stable range $n \geq \left\lfloor \frac{4d}{q-1} \right\rfloor $ for $q \geq 2$. In this case the cohomology groups are nonzero only in degrees $d$ divisible by $(q-1)$, and for the nonzero groups the sharp range is given by Hersh--Reiner \cite[Theorem 1.1]{HershReiner}:  
    \begin{equation*}
  n \geq \begin{cases}
    \frac{3d}{q-1} & \text{if $q\geq 3$ odd}\\
    \frac{3d}{q-1} + 1 & \text{if $q \geq 2$ even}. 
  \end{cases}
\end{equation*}

\end{remark}

\subsection{The transfer map and classical (co)homological stability } \label{TransMapSection}
In this section we give another proof of the stability of the rational (co)homology group of the unordered vertical configuration spaces $\cV_n^{k}(\R^{p,q})$. Our proof uses representation stability and the transfer map. 

Analogous to the stabilization maps we defined on the ordered vertical configuration spaces in Definition \ref{DefnOrderedStabilization}, we define stabilization maps $\phi_n: \cV_n^{k}(\R^{p,q}) \to \cV_{n+1}^{k}(\R^{p,q})$ on the unordered spaces by introducing a $(n+1)^{st}$ cluster of $k$ points ``far away," as in Figure \ref{UnorderedStablization}. See Latifi \cite[Section 4.1]{Latifi} or Bianchi--Kranhold \cite[Construction 4.1]{Vconf}.

\begin{figure}[h!]  
\begin{center}
    \includegraphics{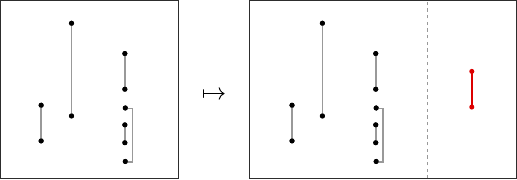}
    \caption{The stabilization map $\phi_5: \cV_5^{2}(\R^{1,1}) \to \cV_{6}^{2}(\R^{1,1})$. Figure adapted from Bianchi--Kranhold \cite[Figure 6]{Vconf}.} \label{UnorderedStablization}       
        \end{center}
        \end{figure} 

Latifi and Bianchi--Kranhold proved the following stability result with respect to the maps $\phi_n$. Their results generalize the work of Arnol'd \cite{Arnold} and others on homological stability for classical configuration spaces of points in $\R^s$, and more recent work of Palmer \cite{Palmer} on unlabelled clusters of points in $\R^s$ (without the ``verticality" condition). 

\begin{theorem}[{Latifi \cite[Theorem 4.1.4]{Latifi} and Bianchi--Kranhold \cite[Theorem 4.3]{Vconf}}]  For all $p \geq 0$ and $q \geq 1$ with $(p, q) \neq (0, 1)$, the induced maps
$$H_d(\cV^{k}_n(\R^{p,q})) \longrightarrow  H_d(\cV^{k}_{n+1}(\R^{p,q}))$$ 
are isomorphisms for all $n \geq 2d$.     
\end{theorem}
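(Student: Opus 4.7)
The plan is to prove this result by adapting the standard semi-simplicial resolution framework for homological stability (Quillen, Hatcher--Wahl, Randal-Williams--Wahl) to the vertical configuration setting, following the scheme that Palmer \cite{Palmer} carried out for ordinary unordered cluster configurations in Euclidean space. Throughout I write $V_n = \cV^{k}_n(\R^{p,q})$ for brevity.

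First I would construct an augmented semi-simplicial space $R_\bullet \to V_{n+1}$ as follows. An $i$-simplex over a configuration $c \in V_{n+1}$ is an ordered $(i+1)$-tuple of distinct clusters of $c$ that can be simultaneously isotoped to infinity in a fixed coordinate direction---the positive $e_1$-direction of $\R^p$ when $p \geq 1$, or the positive $e_1$-direction of $\R^q$ when $p = 0$ and $q \geq 2$. Face maps forget a chosen cluster, and the augmentation $\epsilon\colon \|R_\bullet\| \to V_{n+1}$ forgets the ordered tuple entirely. An isotopy argument that uses the hypothesis $(p,q) \neq (0,1)$---either $\R^q$ has room to slide clusters past each other when $q \geq 2$, or $\R^p$ does when $p \geq 1$---shows that the space of $i$-simplices is homotopy equivalent to $V_{n-i}$, and that the face maps become homotopic to iterated stabilization maps $\phi_*$ up to a permutation action of $S_{i+1}$.

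The technical heart of the argument is to show that the homotopy fiber of $\epsilon$ over each point of $V_{n+1}$ is $\lfloor (n-1)/2 \rfloor$-connected. The fiber over a configuration $c$ is the semi-simplicial set of ordered tuples of distinct ``removable'' clusters, which can be described combinatorially as the nerve of an open cover indexed by affine hyperplanes in the scanning direction. Standard bad-simplex / coloring arguments (in the style of Hatcher--Vogtmann, as adapted by Palmer) show that this complex is highly connected; a pigeonhole argument on the scanning coordinates of the cluster centers guarantees that at least half of the clusters of $c$ can be separated from the others by a hyperplane, which feeds directly into the connectivity bound. Running the spectral sequence associated to the augmented semi-simplicial space, with $E^1_{i,j} \cong H_j(V_{n-i})$ and $d^1$ an alternating sum of stabilization-induced maps, and using that it converges to a graded group vanishing in total degree $\leq \lfloor (n-1)/2 \rfloor$, a standard induction on $d$ then yields that $(\phi_n)_* \colon H_d(V_n) \to H_d(V_{n+1})$ is an isomorphism for every $n \geq 2d$.

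The main obstacle is the connectivity estimate for the complex of removable clusters. The verticality constraint forbids isotoping an individual cluster freely in $\R^{p+q}$: all $k$ of its points must move together along a vertical line. This rules out a direct import of Palmer's argument for clusters of unconstrained points, since the $k$ points in a cluster cannot be deformed independently. The hypothesis $(p,q) \neq (0,1)$ is precisely the threshold at which either $\R^p$ or $\R^q$ has enough room for the required cluster isotopies, and the careful bookkeeping of the verticality condition under these isotopies---together with the verification that the resulting resolution is genuinely weakly equivalent to $V_{n+1}$ as an augmented object---is where most of the work lies.
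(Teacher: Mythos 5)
You should first be aware that the paper does not prove this theorem at all: it is quoted verbatim from Latifi and Bianchi--Kranhold, and the authors' own contribution is the neighbouring Corollary \ref{CorClassicalStability}, which is proved by an entirely different mechanism. There, one first establishes representation stability for the \emph{ordered} spaces $\widetilde{\cV}^k_n(\R^{p,q})$ as finitely generated $\FI_{S_k}\sh$-modules (Theorem \ref{mainThm}, which rests on Bianchi--Kranhold's explicit cohomology computation and a polynomial count of ray partitions), and then passes to the unordered quotient via the transfer map and the stability of coinvariants of induced modules (Propositions \ref{InducedMapCoinvariants} and \ref{NirCoinvariant}). That route buys a sharper range $n \geq \lfloor 2d/(q-1)\rfloor$ but costs integral coefficients and the cases $q=1$, whereas your semi-simplicial resolution scheme---in the style of Palmer and of the original sources---is the natural way to get the integral statement in the full range $(p,q)\neq(0,1)$. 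So your approach is genuinely different from the paper's, and is closer in spirit to the proofs you would find in Latifi and Bianchi--Kranhold themselves.

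That said, as a proof your proposal has a real gap rather than merely omitted routine detail: the high connectivity of the homotopy fiber of $\epsilon$ (equivalently, of the complex of ``removable'' clusters) is not an input you can cite---it \emph{is} the theorem. Everything else you describe (identification of the $i$-simplices with $\cV^k_{n-i}(\R^{p,q})$, the $E^1$-page, the induction on $d$) is standard boilerplate that works for any family once the connectivity estimate is in hand. You correctly identify that the verticality constraint obstructs a direct import of Palmer's argument, but you then assert the needed connectivity via an unspecified ``bad-simplex / coloring'' and ``pigeonhole'' argument without confronting the actual difficulty: when $p\geq 1$ and $q=1$ the clusters can only be translated as rigid vertical fibers, and when $p=0$ one must scan in a direction inside the verticality fibers themselves, so the two regimes require genuinely different isotopy constructions and the claimed $\lfloor (n-1)/2\rfloor$-connectivity must be verified separately in each. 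Until that lemma is stated precisely and proved, the argument does not close; and even granting it, you should check that $\lfloor(n-1)/2\rfloor$-connected fibers really do deliver isomorphisms for all $n\geq 2d$ (rather than $n\geq 2d+1$), since the boundary of the range is exactly where the spectral-sequence bookkeeping tends to slip by one.
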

\noindent They note that stability fails when $(p,q)=(0,1)$. 

We give a new proof of rational (co)homological stability in the cases $p \geq 0$ and $q \geq 2$. Although our results only hold rationally, for $q \geq 3$ they improve the stable range.

To deduce (co)homological stability from Theorem \ref{mainThm}, we follow a standard line of reasoning in the representation stability literature (see, for example, Church--Ellenberg--Farb \cite[Corollary 6.3.4]{CEF}). We will, however, give a more detailed justification of the approach, as these details may not be easily found in the literature. We first review the theory of transfer maps of covering spaces. 

\subsubsection{The transfer map} 

Recall that, given a representation $U$ of a finite group $\cG$ over $\Q$, the ``averaging map" over the action of $\cG$
\begin{align*}
   \alpha_{\cG} \colon U &\longrightarrow U \\ 
    u & \longmapsto \frac{1}{|\cG|} \sum_{g \in \cG} g \cdot u,
\end{align*}
defines a projection from $U$ to its $\cG$-invariant subspace $U^{\cG}$, which factors through an isomorphism between the $\cG$-coinvariants and $\cG$-invariants $U_{\cG} \cong U^{\cG}$. 

We now recall some facts about transfer maps (see, for example, Hatcher \cite[Section 3.G]{Hatcher}). 

\begin{definition}
    Let $\widetilde{X}$ be an $N$-sheeted covering space $\pi: \widetilde{X} \rightarrow X$ of a space $X$ for some $N \in \mathbb{N}$. Then $\pi$ induces a map $\pi_{\#}:$ $C_d(\widetilde{X};\Q) \rightarrow C_d(X;\Q)$ on singular chains. Define a map 
    \begin{align*}
    \tau: C_d(X;\Q) & \longrightarrow C_d(\widetilde{X};\Q) \\
    [\sigma: \Delta^d \rightarrow X] & \longmapsto  \left[   \sum_{\text{lifts $\Tilde{\sigma_i}$ of ${\sigma}$}} \frac{(  \Tilde{\sigma_i}: \Delta^d \rightarrow \widetilde{X} )}{N} \right].
    \end{align*} 
\end{definition}
    
   The map $\tau$ is a chain map, so it induces \textit{transfer homomorphisms} $\tau_*: H_d(X; \Q) \rightarrow H_d(\widetilde{X}; \Q)$ on homology and $\tau^*: H^d(\widetilde{X}; \Q) \rightarrow H^d(X; \Q)$ on cohomology,  satisfying $\pi_* \tau_* = id$ and $\tau^* \pi^* = id$.   When $\pi$ is a regular cover with deck group $\cG$, then the map $\tau \pi_{\#}$ is the `averaging map' $\alpha_{\cG}$, 
\begin{align*}
 C_d(\widetilde{X}; \Q) \overset{\pi_{\#}}{\longrightarrow} & \; C_d(X;\Q)  \overset{\tau}{\longrightarrow}  C_d(\widetilde{X};\Q) \\ 
 \tilde{\sigma} \longmapsto & \;\pi_{\#}( \tilde{\sigma}) \longmapsto \frac{1}{|\cG|} \sum_{g \in \cG} g\cdot \tilde{\sigma}. 
\end{align*}
The induced map $\tau_* \pi_*$ is the averaging map on homology $H_d(\widetilde{X}; \Q)$, and the induced map $\pi^* \tau^*$ is the averaging map on cohomology $H^d(\widetilde{X}; \Q)$.  This implies the following (standard) result.

\begin{prop} \label{transferMap}
    Let $X$ be a topological space, $\widetilde{X}$ be the regular covering space with finite deck group $\cG$. 
Then the map $\pi^*: \  H^d(X; \Q) \rightarrow H^d(\widetilde{X};\Q)$ induces an isomorphism between $ H^d(X; \Q)$ and the $\cG$-invariant subspace of  $H^d(\widetilde{X};\Q)$ 
$$ H^d(X; \Q) \xrightarrow[]{\cong} H^d(\widetilde{X};\Q)^{\cG}.$$  The map $\tau^*:  H^d(\widetilde{X}; \Q) \to H^d(X;\Q) $ factors through an isomorphism between the $\cG$-coinvariants of  $H^d(\widetilde{X};\Q)$ and $H^d(X; \Q) $,  $$ H^d(\widetilde{X};\Q)_\cG \xrightarrow[]{\cong} H^d(X; \Q).  $$ Similarly, the maps $\pi_*$ and $\tau_*$, respectively, factor through isomorphisms
$$   H_d(\widetilde{X};\Q)_\cG \xrightarrow[]{\cong} H_d(X; \Q) \quad \text{and} \quad H_d(X; \Q) \xrightarrow[]{\cong} H_d(\widetilde{X};\Q)^{\cG}, \text{ respectively.} $$ 
\end{prop}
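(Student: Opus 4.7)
The plan is to extract the four claimed isomorphisms directly from the two pairs of identities $\tau^*\pi^* = \mathrm{id}$, $\pi^*\tau^* = \alpha_{\cG}$ on cohomology, and symmetrically $\pi_*\tau_* = \mathrm{id}$, $\tau_*\pi_* = \alpha_{\cG}$ on homology, which the excerpt has already established. The only additional input is the standard fact that over $\Q$, where $|\cG|$ is invertible, the averaging map $\alpha_{\cG}$ is an idempotent endomorphism of any $\Q[\cG]$-module $U$ whose image equals the invariants $U^{\cG}$ and whose kernel equals the subspace $\langle g\cdot u - u\rangle$ that is quotiented out in forming the coinvariants $U_{\cG}$.

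For the invariant statement on cohomology, I first observe that $\pi^*$ lands in $H^d(\widetilde X;\Q)^{\cG}$: every deck transformation $g\in\cG$ satisfies $\pi\circ g = \pi$, so $g^*\pi^* = \pi^*$. Injectivity of $\pi^*$ is immediate from $\tau^*\pi^* = \mathrm{id}$. For surjectivity onto the invariants, if $\beta \in H^d(\widetilde X;\Q)^{\cG}$, then $\pi^*\tau^*(\beta) = \alpha_{\cG}(\beta) = \beta$, since $\alpha_{\cG}$ fixes invariants, exhibiting $\beta$ as $\pi^*(\tau^*(\beta))$.

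For the coinvariant statement, the key step is to show $\ker(\tau^*) = \ker(\alpha_{\cG})$. The inclusion $\ker(\tau^*) \subseteq \ker(\alpha_{\cG})$ is immediate from $\alpha_{\cG} = \pi^*\tau^*$; conversely, if $\alpha_{\cG}(\beta) = 0$ then $\pi^*\tau^*(\beta) = 0$, and the injectivity of $\pi^*$ established above forces $\tau^*(\beta) = 0$. Hence $\tau^*$ factors through the quotient $H^d(\widetilde X;\Q)/\ker(\alpha_{\cG}) = H^d(\widetilde X;\Q)_{\cG}$ as an injection, which is surjective because $\tau^*$ already has a right inverse $\pi^*$.

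The homology statements are proved by the entirely symmetric argument, with $\tau_*$ playing the role of $\pi^*$ (landing in invariants) and $\pi_*$ playing the role of $\tau^*$ (factoring through coinvariants). The image of $\tau_*$ lies in $H_d(\widetilde X;\Q)^{\cG}$ already at the chain level, since $\tau$ sends a simplex in $X$ to the uniformly weighted sum over its free $\cG$-orbit of lifts in $\widetilde X$, a manifestly $\cG$-invariant chain. There is no serious obstacle beyond bookkeeping; the essential input is that $|\cG|$ is invertible in $\Q$, so that $\alpha_{\cG}$ is genuinely a projection onto invariants whose kernel is generated by $(g-1)$-translates.
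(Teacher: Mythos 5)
Your proposal is correct and follows essentially the same route as the paper: the paper establishes the identities $\tau^*\pi^*=\mathrm{id}$, $\pi^*\tau^*=\alpha_{\cG}$ (and their homological counterparts) and asserts the proposition as a standard consequence, and your argument is precisely the derivation from those identities, using that over $\Q$ the averaging map is a projection onto invariants whose kernel is spanned by the elements $g\cdot u-u$. The only detail worth keeping explicit is the one you already note for $\tau_*$, namely that $\cG$ acts simply transitively on the set of lifts of a simplex for a regular cover, so $\tau(\sigma)$ is $\cG$-invariant at the chain level.
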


\subsubsection{Classical (co)homological stability as a consequence of representation stability} \label{SectionRepStabilityImpliesClassicalStability}

We will apply this theory to the situation where we have a sequence of covers $\widetilde{X}_n$ equipped with  maps $\tilde\varphi_n: \widetilde{X}_n \to \widetilde{X}_{n+1}$ and $\tilde\varphi^n: \widetilde{X}_{n+1} \to \widetilde{X}_{n}$ as shown 
\[\begin{tikzcd}[column sep=large]
	{C_d(\widetilde{X}_n;\mathbb{Q}) \; \;} & {\;\; C_d(\widetilde{X}_{n+1};\mathbb{Q})} \\
	{C_d({X}_n;\mathbb{Q}) \; \;} & {\; \; C_d({X}_{n+1};\mathbb{Q})}
	\arrow["{\tilde\varphi^n}"', from=1-2, to=1-1]
	\arrow["{\pi_{\#}}"', bend right, from=1-1, to=2-1]
	\arrow["{\pi_{\#}}"', bend right, from=1-2, to=2-2]
	\arrow["\tau"',  bend right, from=2-1, to=1-1]
	\arrow["{\varphi^n}", from=2-2, to=2-1]
	\arrow["\tau"', bend right, from=2-2, to=1-2]
\end{tikzcd} \qquad \qquad
\begin{tikzcd}[column sep=large]
	{C_d(\widetilde{X}_n;\mathbb{Q}) \; \;} & {\;\; C_d(\widetilde{X}_{n+1};\mathbb{Q})} \\
	{C_d({X}_n;\mathbb{Q}) \; \;} & {\; \; C_d({X}_{n+1};\mathbb{Q})}
	\arrow["{\tilde\varphi_n}", from=1-1, to=1-2]
	\arrow["{\pi_{\#}}"', bend right, from=1-1, to=2-1]
	\arrow["{\pi_{\#}}"', bend right, from=1-2, to=2-2]
	\arrow["\tau"',  bend right, from=2-1, to=1-1]
	\arrow["{\varphi_n}"', from=2-1, to=2-2]
	\arrow["\tau", bend right, from=2-2, to=1-2]
\end{tikzcd}\]
satisfying the relations $\varphi^n \pi_{\#} = \pi_{\#} \tilde{\varphi}^n$ and $\varphi_n \pi_{\#} = \pi_{\#} \tilde{\varphi}_n$. We assume that the induced maps 
$$\left((\tilde\varphi^n)^*: H^d(\widetilde{X}_n;\Q)\longrightarrow   H^d(\widetilde{X}_{n+1};\Q)\right)_n \text{ and } \left((\tilde\varphi_n)_*: H_d(\widetilde{X}_n;\Q)\longrightarrow  H_d(\widetilde{X}_{n+1};\Q)\right)_n$$ form consistent sequences of $G \wr S_n$-representations in the sense of Definition \ref{DefnConsistentSequence}, (or assume at a minimum that the composites 
$$H^d(\widetilde{X}_n;\Q)\xrightarrow{(\tilde\varphi^n)^*}   H^d(\widetilde{X}_{n+1};\Q) \twoheadrightarrow H^d(\widetilde{X}_{n+1};\Q)_{\cG_{n+1}} $$ $$  H_d(\widetilde{X}_n;\Q)\xrightarrow{(\tilde\varphi_n)_*}  H_d(\widetilde{X}_{n+1};\Q)\twoheadrightarrow H_d(\widetilde{X}_{n+1};\Q)_{\cG_{n+1}}$$ factor through the coinvariants $H^d(\widetilde{X}_{n};\Q)_{\cG_n}$ and $H_d(\widetilde{X}_n;\Q)_{\cG_n}$, respectively. Here we denote $\cG_n = G \wr S_n$.)

The relations $\varphi^n \pi_{\#} = \pi_{\#} \tilde{\varphi}^n$, $\varphi_n \pi_{\#} = \pi_{\#} \tilde{\varphi}_n$  and $\pi_{\#}\tau=id$, imply
%$$ \varphi_n = \varphi_n (\pi_{\#}\tau)(\pi_{\#}\tau) = (\phi_n \pi_{\#})(\tau\pi_{\#})\tau = \pi_{\#} \tilde{\varphi}_n \alpha_{\cG} \tau.$$
$$ \varphi^n = \varphi^n (\pi_{\#}\tau) = (\varphi^n \pi_{\#})\tau = (\pi_{\#} \tilde\varphi^n )\tau   $$
$$ \varphi_n = \varphi_n (\pi_{\#}\tau) = (\varphi_n \pi_{\#})\tau = (\pi_{\#} \tilde\varphi_n )\tau.$$
Thus, we can identify the induced maps $(\varphi^n)^*$ and $(\varphi_n)_*$ on (co)homology with the composites 
{\footnotesize \[\begin{tikzcd}
	{H^d({X}_n, \mathbb{Q})} & {H^d(\widetilde{X}_n, \mathbb{Q})^{\mathcal{G}_n}} & {H^d(\widetilde{X}_n, \mathbb{Q})} & {H^d(\widetilde{X}_{n+1}, \mathbb{Q})} & {H^d(\widetilde{X}_{n+1}, \mathbb{Q})_{\mathcal{G}_{n+1}}} & {H^d({X}_{n+1}, \mathbb{Q})} \\
	&& {H^d(\widetilde{X}_n, \mathbb{Q})_{\mathcal{G}_n}}
	\arrow["\cong"', from=1-1, to=1-2]
	\arrow["{\pi^*}", bend left, from=1-1, to=1-3]
	\arrow[hook, from=1-2, to=1-3]
	\arrow["\cong"', from=1-2, to=2-3]
	\arrow["{(\tilde\varphi^n)^*}", from=1-3, to=1-4]
	\arrow[two heads, from=1-3, to=2-3]
	\arrow[two heads, from=1-4, to=1-5]
	\arrow["{\tau^*}", bend left, from=1-4, to=1-6]
	\arrow["\cong"', from=1-5, to=1-6]
	\arrow["{(\tilde\varphi^{n})^*_{\mathcal{G}}}"', dashed, from=2-3, to=1-5]
   % \arrow["{\varphi_n^*}", bend left, from=1-1, to=1-6]
\end{tikzcd}\] } 

{\footnotesize  \[\begin{tikzcd}
	&&& \; \\
	{H_d({X}_n, \mathbb{Q})} & {H_d(\widetilde{X}_n, \mathbb{Q})^{\mathcal{G}_n}} & {H_d(\widetilde{X}_n, \mathbb{Q})} & {H_d(\widetilde{X}_{n+1}, \mathbb{Q})} & {H_d(\widetilde{X}_{n+1}, \mathbb{Q})_{\mathcal{G}_{n+1}}} & {H_d({X}_{n+1}, \mathbb{Q})} \\
	&& {H_d(\widetilde{X}_n, \mathbb{Q})_{\mathcal{G}_n}}
	\arrow["\cong"', from=2-1, to=2-2]
	\arrow["{\tau_*}", bend left, from=2-1, to=2-3]
	\arrow[hook, from=2-2, to=2-3]
	\arrow["\cong"', from=2-2, to=3-3]
	\arrow["{(\tilde\varphi_n)_*}", from=2-3, to=2-4]
	\arrow[two heads, from=2-3, to=3-3]
	\arrow[two heads, from=2-4, to=2-5]
	\arrow["{\pi_*}", bend left, from=2-4, to=2-6]
	\arrow["\cong"', from=2-5, to=2-6]
	\arrow["{((\tilde\varphi_n)_{*})_{\mathcal{G}}}"', dashed, from=3-3, to=2-5]
\end{tikzcd}\] } 

In particular, we can express the map $(\varphi^n)^*$ as a composite of the induced map $(\tilde\varphi^n)^*_{\cG}$ on coinvariants with several other maps that are all isomorphisms. The analogous statement holds for $(\varphi_n)_*$. 
We conclude the following result. 

\begin{prop} \label{InducedMapCoinvariants} With notation as in the previous paragraphs, the map 
$$(\varphi^n)^*: H^d(X_n;\Q) \longrightarrow H^d(X_{n+1};\Q)$$ is an isomorphism (or injective, surjective, respectively) on cohomology precisely when the induced map $$(\tilde\varphi^n)^*_{\cG}: H^d(\widetilde{X}_n;\Q)_{\cG_n} \longrightarrow  H^d(\widetilde{X}_{n+1};\Q)_{\cG_{n+1}}$$ is an isomorphism (or injective, surjective, respectively) on cohomology coinvariants. The map 
$$(\varphi_n)_*: H_d(X_n;\Q) \longrightarrow  H_d(X_{n+1};\Q)$$ is an isomorphism (or injective, surjective, respectively) on homology precisely when the induced map $$((\tilde\varphi_n)_*)_{\cG}: H_d(\widetilde{X}_n;\Q)_{\cG_n} \longrightarrow  H_d(\widetilde{X}_{n+1};\Q)_{\cG_{n+1}},$$ is an isomorphism (or injective, surjective, respectively) on homology coinvariants. 
\end{prop}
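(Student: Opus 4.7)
The plan is to make rigorous the two commutative diagrams displayed just before the statement. Two inputs do almost all of the work: Proposition \ref{transferMap}, which identifies $H^d(X_n;\Q)$ with the $\cG_n$-invariants of $H^d(\widetilde{X}_n;\Q)$ via $\pi^*$ and factors $\tau^*$ through an isomorphism with the $\cG_{n+1}$-coinvariants (and dually for homology); and the standard fact that, for any finite group $\cG$ acting on a $\Q$-vector space $U$, the averaging projector $\alpha_{\cG}$ witnesses a canonical isomorphism $U^{\cG} \cong U_{\cG}$, in the sense that the composite $U^{\cG} \hookrightarrow U \twoheadrightarrow U_{\cG}$ is a $\Q$-linear isomorphism. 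With these two inputs in hand, the proposition will reduce to the purely categorical observation that a map expressible as a composite $\alpha \circ \beta \circ \gamma$ with $\alpha$ and $\gamma$ isomorphisms is injective (respectively surjective, bijective) if and only if $\beta$ is.

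First I would verify the commutativity of the two displayed diagrams. The chain-level identities $\pi_{\#}\tau = \mathrm{id}$ and $\varphi^n\pi_{\#} = \pi_{\#}\tilde\varphi^n$ together give $\varphi^n = \pi_{\#}\tilde\varphi^n\tau$, which on cohomology yields the factorization $(\varphi^n)^* = \tau^*(\tilde\varphi^n)^*\pi^*$; the homology identity $(\varphi_n)_* = \pi_*(\tilde\varphi_n)_*\tau_*$ is derived analogously from $\varphi_n\pi_{\#} = \pi_{\#}\tilde\varphi_n$ and $\pi_{\#}\tau = \mathrm{id}$. Next I would check that, under the consistency hypothesis on $\tilde\varphi^n$, the induced map $(\tilde\varphi^n)^*$ is equivariant in the precise sense that allows it to descend to a well-defined map on coinvariants $(\tilde\varphi^n)^*_{\cG}\colon H^d(\widetilde{X}_n;\Q)_{\cG_n} \to H^d(\widetilde{X}_{n+1};\Q)_{\cG_{n+1}}$, with the analogue $((\tilde\varphi_n)_*)_{\cG}$ on homology.

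Assembling these ingredients into the displayed diagrams completes the argument. The horizontal maps $\pi^*$, $\tau^*$, $\pi_*$, and $\tau_*$ are, by Proposition \ref{transferMap}, isomorphisms onto the invariant or coinvariant subspaces of the cover's (co)homology; the averaging projector $\alpha_{\cG_n}$ supplies the canonical vertical isomorphism between the $\cG_n$-invariants and $\cG_n$-coinvariants. Stringing these together, $(\varphi^n)^*$ is exhibited as the composite of $(\tilde\varphi^n)^*_{\cG}$ sandwiched between two $\Q$-linear isomorphisms, so it is injective, surjective, or bijective exactly when $(\tilde\varphi^n)^*_{\cG}$ is; the homology statement is identical with the roles of $\pi$ and $\tau$ swapped.

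The main point of care, more a bookkeeping task than a conceptual obstacle, is verifying that the averaging isomorphism $U^{\cG_n} \cong U_{\cG_n}$ intertwines correctly with $(\tilde\varphi^n)^*$ and its descent $(\tilde\varphi^n)^*_{\cG}$: concretely, one must check that the inclusion $\cG_n \hookrightarrow \cG_{n+1}$, together with the equivariance of $(\tilde\varphi^n)^*$, makes averaging over $\cG_n$ on the source compatible with averaging over $\cG_{n+1}$ on the target, so that the resulting squares commute. This is immediate from the consistent-sequence hypothesis (or, in the weaker setup mentioned parenthetically in the excerpt, from the assumption that the composite with the coinvariant quotient factors through $H^d(\widetilde{X}_n;\Q)_{\cG_n}$), and the proof of the proposition follows.
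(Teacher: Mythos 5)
Your proposal is correct and follows essentially the same route as the paper: the paper likewise derives the chain-level factorizations $\varphi^n = (\pi_{\#}\tilde\varphi^n)\tau$ and $\varphi_n = (\pi_{\#}\tilde\varphi_n)\tau$, applies Proposition \ref{transferMap} together with the averaging isomorphism between invariants and coinvariants, and reads off the conclusion from the resulting commutative diagrams exhibiting $(\varphi^n)^*$ and $(\varphi_n)_*$ as the descended maps on coinvariants sandwiched between isomorphisms. No gaps.
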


In our setting, we will see that representation stability for the sequences $$\left((\tilde\varphi^n)^*: H^d(\widetilde{X}_n;\Q)\to   H^d(\widetilde{X}_{n+1};\Q)\right)_n \text{ and } \left((\tilde\varphi_n)_*: H_d(\widetilde{X}_n;\Q)\to  H_d(\widetilde{X}_{n+1};\Q)\right)$$ will imply that the induced maps on coinvariants $(\tilde\varphi^n)^*_{\cG}$, $((\tilde\varphi_n)_*)_{\cG}$ are isomorphisms in a stable range, and hence by Proposition \ref{InducedMapCoinvariants} deduce (co)homological stability for the sequences $$\Big((\varphi^n)^*: H^d(X_n;\Q) \to H^d(X_{n+1};\Q)\Big)_n \text{ and }\Big((\varphi_n)_*: H_d(X_n;\Q) \to  H_d(X_{n+1};\Q)\Big)_n.$$ 

The following result is a special case of, for example, Gadish \cite[Theorem 4.5]{Gadish-FItype}. In the case of $\fig$-modules it is also possible to deduce the statement directly, using the description of the induced modules (Proposition \ref{M(m)Facts} part (\ref{M(T)Induced})), and the formula for their decomposition into irreducible constituents (stated in Corollary \ref{PieriRuleWreath}).

\begin{prop}[{Gadish \cite[Lemma 4.3 and Theorem 4.5]{Gadish-FItype}}] \label{NirCoinvariant} Let $G$ be a finite group. Let $V$ be an induced $\fig$-module (in the sense of Definition \ref{DefnInduced}) generated in degree $\leq r$. Then the $\fig$-module structure on $V$ induces a $\fig$-module structure on its sequence of coinvariants $(V_n)_{G \wr S_n}$. All morphisms act on the coinvariants by injective maps, and for $n \geq r$ the induced maps  $$(V_n)_{G \wr S_n} \longrightarrow (V_{n+1})_{G \wr S_{n+1}}$$ are isomorphisms. 
\end{prop}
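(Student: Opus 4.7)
The plan is to reduce to the case $V = M(T)$ for a single $G\wr S_d$-representation $T$ with $d\leq r$ (the extension to direct sums $\bigoplus_{d\leq r}M(T_d)$ is immediate since coinvariants commute with direct sums), and then to compute its $G\wr S_n$-coinvariants explicitly using the formula $M(T)_n \cong \Ind_{(G\wr S_d)\times(G\wr S_{n-d})}^{G\wr S_n}(T\boxtimes R)$ from Proposition \ref{M(m)Facts}(\ref{M(T)Induced}).

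First I would verify that the $\fig$-module structure descends to coinvariants. For any $f\in\Hom_{\fig}(\mX,\nX)$ and $g\in G\wr S_m$, factoring $f = \sigma\circ\iota_{m,n}$ via Proposition \ref{Map Factors Lemma in fig context} and applying consistent-sequence equivariance (Definition \ref{DefnConsistentSequence}) yields $f_*(g\cdot v) = g'\cdot f_*(v)$ for some $g'\in G\wr S_n$. Since $g'$ acts trivially on $(V_n)_{G\wr S_n}$, the map $f_*$ descends to a well-defined, functorial map on coinvariants.

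Next, applying the standard identity $(\Ind_H^K U)_K \cong U_H$ (a consequence of $R\otimes_{R[K]}(R[K]\otimes_{R[H]}U)\cong R\otimes_{R[H]}U$) to the formula for $M(T)_n$, I would compute
\[
(M(T)_n)_{G\wr S_n} \;\cong\; (T\boxtimes R)_{(G\wr S_d)\times(G\wr S_{n-d})} \;\cong\; T_{G\wr S_d}
\]
for $n\geq d$, and $0$ for $n<d$. The stabilization map $(\iota_{n,n+1})_*$, on the level of induced representations, sends the distinguished coset tensor $[1]\otimes t$ to $[1]\otimes t$, so under the above identifications it realizes the identity on $T_{G\wr S_d}$ whenever $n\geq d$ and the zero map from $0$ otherwise. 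This gives injectivity for all $n$ and isomorphism for $n\geq d$, hence for $n\geq r$.

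The main obstacle is really the first step---the descent requires carefully tracking the interaction between the non-uniqueness of the factorization $f = \sigma\circ\iota_{m,n}$ and the equivariance properties of the consistent sequence. Once functoriality is in hand, the rest of the proposition follows formally from the induced-representation description and the adjunction $(\Ind_H^K U)_K \cong U_H$.
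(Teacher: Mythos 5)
Your argument is correct. The paper does not actually write out a proof of this proposition: it cites Gadish and remarks only that a direct argument is possible ``using the description of the induced modules (Proposition \ref{M(m)Facts} part (\ref{M(T)Induced})), and the formula for their decomposition into irreducible constituents (stated in Corollary \ref{PieriRuleWreath})'' --- i.e.\ via Pieri's rule, reading off the multiplicity of the trivial representation in each $M(T_d)_n$. You take a genuinely different (and arguably cleaner) final step: after the same reduction to $V=\bigoplus_{d\leq r}M(T_d)$ and the same descent argument, you compute the coinvariants by the adjunction $(\Ind_H^K U)_K\cong U_H$, i.e.\ $R\otimes_{R[K]}R[K]\otimes_{R[H]}U\cong R\otimes_{R[H]}U$, obtaining $(M(T)_n)_{G\wr S_n}\cong T_{G\wr S_d}$ with the stabilization map acting as the identity on the distinguished coset generator. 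This buys you two things over the Pieri-rule route: it works over an arbitrary commutative ring $R$ rather than a characteristic-zero splitting field, and it identifies the stabilization map itself rather than merely matching multiplicities of the trivial constituent. Your descent step is also sound: writing $f=\sigma\circ\iota_{m,n}$ and using $\iota_{m,n}\circ g=g'\circ\iota_{m,n}$ gives $f_*(g\cdot v)=(\sigma g'\sigma^{-1})\cdot f_*(v)$, which agrees with $f_*(v)$ in coinvariants, and the non-uniqueness of $\sigma$ is harmless because any two factorizations differ by postcomposition with an element of $G\wr S_n$, which acts trivially on the target coinvariants.
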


\subsubsection{Rational (co)homological stability for unordered vertical configuration spaces} 

Consider $\widetilde{\cV}_{n}^k (\R^{p,q})$ as a $((k!)^n \cdot n!)$-sheeted cover of $\cV_{n}^k (\R^{p,q})$ defined by the covering space action of the wreath product $S_k \wr S_n$.
We observe that these covering spaces and our stabilization maps satisfy the framework of Section \ref{SectionRepStabilityImpliesClassicalStability}.
We obtain the following corollary, our classical (co)homological stability result for the rational (co)homology of the unordered vertical configuration spaces.

\begin{corollary} \label{CorClassicalStability}
     Fix  $k \geq 1$, $p \geq 0$, $q \geq 2$, and (co)homological degree $d$.  The (co)homology of the unordered vertical configuration spaces, $\left( H_d({\cV}^k_n (\R^{p,q});\Q) \right)_{n}$ and $\left( H^d({\cV}^k_n (\R^{p,q});\Q) \right)_{n}$, stabilize. Specifically, the maps on homology (respectively, cohomology) induced by the stablization maps $\phi_n: {\cV}^k_n (\R^{p,q}) \to  {\cV}^k_{n+1} (\R^{p,q})$ (respectively, the forgetful maps $\phi^n: {\cV}^k_{n+1} (\R^{p,q}) \to   {\cV}^k_{n} (\R^{p,q})$) are always injective, and are isomorphisms for $n \geq \left\lfloor \frac{2d}{q-1} \right\rfloor$. 
\end{corollary}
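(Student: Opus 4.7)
The plan is to combine the main representation stability result (Theorem \ref{mainThm}) with the transfer machinery developed in Section \ref{SectionRepStabilityImpliesClassicalStability}, reducing the classical (co)homological stability question to a statement about coinvariants of an $\fig\sh$-module. First, I would check that we are in the covering space framework of Section \ref{SectionRepStabilityImpliesClassicalStability}: each $\pi_n \colon \widetilde{\cV}_n^k(\R^{p,q}) \to \cV_n^k(\R^{p,q})$ is a regular $(k!)^n n!$-sheeted cover with deck group $S_k \wr S_n$, and by construction (Definition \ref{DefnOrderedStabilization} and the discussion of $\phi_n$ preceding Corollary \ref{CorClassicalStability}) the unordered stabilization map $\phi_n$ is the map on the quotient spaces induced by $\tilde\phi_n$, so the relations $\phi_n \pi_{\#} = \pi_{\#} \tilde\phi_n$ at the level of singular chains hold. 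By Corollary \ref{H^d(V)HasFIGStructure} the induced sequence on ordered (co)homology is consistent in the sense of Definition \ref{DefnConsistentSequence}, fulfilling the remaining hypothesis.

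Having done that, Proposition \ref{InducedMapCoinvariants} reduces the statement of Corollary \ref{CorClassicalStability} to showing that the induced maps on coinvariants
$$ (\tilde\phi_n)^*_{\cG} \colon H^d(\widetilde{\cV}_n^k(\R^{p,q});\Q)_{S_k \wr S_n} \longrightarrow H^d(\widetilde{\cV}_{n+1}^k(\R^{p,q});\Q)_{S_k \wr S_{n+1}}$$
(and the analogous homology maps) are injective for all $n$ and are isomorphisms for $n \geq \lfloor 2d/(q-1) \rfloor$. By Theorem \ref{mainThm}, the source and target sequences carry the structure of $\FI_{S_k}\sh$-modules finitely generated in degree at most $\lfloor 2d/(q-1) \rfloor$, and by the structure theorem for $\fig\sh$-modules (Theorem \ref{structure theorem figsharp}) they are isomorphic, as $\fig\sh$-modules and hence a fortiori as $\fig$-modules, to finite direct sums $\bigoplus_{i \leq \lfloor 2d/(q-1) \rfloor} M(T_i)$ of induced modules in the sense of Definition \ref{DefnInduced}.

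Finally, I would invoke Proposition \ref{NirCoinvariant} (due to Gadish), which guarantees that for any induced $\fig$-module generated in degree $\leq r$, the stabilization maps on $S_k \wr S_n$-coinvariants are always injective and are isomorphisms once $n \geq r$. Applying this with $r = \lfloor 2d/(q-1) \rfloor$ gives exactly the injectivity and stable range needed, and combining with Proposition \ref{InducedMapCoinvariants} yields the desired conclusion for both cohomology and homology. Since the cohomological results of Theorem \ref{mainThm}, the structure theorem for $\fig\sh$-modules, and Proposition \ref{NirCoinvariant} are all already in hand, the only genuinely new labor is the compatibility check between the covers, the stabilization maps, and the $\fig\sh$-structure; I expect that to be the most delicate bookkeeping step, but it follows in a straightforward way from the definitions.
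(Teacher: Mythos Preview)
Your proposal is correct and follows essentially the same route as the paper's proof, which simply states that the result follows by combining Theorem \ref{mainThm} with Propositions \ref{InducedMapCoinvariants} and \ref{NirCoinvariant}. Your version is in fact more explicit, spelling out the compatibility check for the covering space framework and the intermediate use of the structure theorem (Theorem \ref{structure theorem figsharp}) needed to know that the $\fig\sh$-modules in question are direct sums of induced modules before invoking Proposition \ref{NirCoinvariant}.
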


\begin{proof}
This result follows by combining our main result Theorem \ref{mainThm} with Propositions \ref{InducedMapCoinvariants} and \ref{NirCoinvariant}.
\end{proof}

\bibliographystyle{amsalpha}
\bibliography{Bibliography}

\providecommand{\bysame}{\leavevmode\hbox to3em{\hrulefill}\thinspace}
\providecommand{\MR}{\relax\ifhmode\unskip\space\fi MR }
% \MRhref is called by the amsart/book/proc definition of \MR.
\providecommand{\MRhref}[2]{%
  \href{http://www.ams.org/mathscinet-getitem?mr=#1}{#2}
}
\providecommand{\href}[2]{#2}
\begin{thebibliography}{CEFN14}

\bibitem[Aig07]{Stirling}
Martin Aigner, \emph{A course in enumeration}, Graduate Texts in Mathematics, vol. 238, Springer, Berlin, 2007.

\bibitem[Arn70]{Arnold}
V.~I. Arnold, \emph{Certain topological invariants of algebraic functions}, Trudy Moskov. Mat. Ob\v s\v c. \textbf{21} (1970), 27--46.

\bibitem[BK22]{Vconf}
Andrea Bianchi and Florian Kranhold, \emph{Vertical configuration spaces and their homology}, The Quarterly Journal of Mathematics \textbf{73} (2022), no.~4, 1279--1306.

\bibitem[Cas16]{casto}
Kevin Casto, \emph{{$\mathrm{FI}_G$}-modules, orbit configuration spaces, and complex reflection groups}, 2016.

\bibitem[CE17]{CE-FIhomology}
Thomas Church and Jordan Ellenberg, \emph{Homology of {$\mathrm{FI}$}-modules}, Geometry \& Topology \textbf{21} (2017), no.~4, 2373--2418.

\bibitem[CEF15]{CEF}
Thomas Church, Jordan~S. Ellenberg, and Benson Farb, \emph{{$\mathrm{FI}$}-modules and stability for representations of symmetric groups}, Duke Math. J. \textbf{164} (2015), no.~9, 1833--1910.

\bibitem[CEFN14]{CEFN}
Thomas Church, Jordan~S Ellenberg, Benson Farb, and Rohit Nagpal, \emph{{$\mathrm{FI}$}-modules over {N}oetherian rings}, Geometry \& Topology \textbf{18} (2014), no.~5, 2951--2984.

\bibitem[CF13]{ChurchFarb}
Thomas Church and Benson Farb, \emph{Representation theory and homological stability}, Advances in Mathematics \textbf{245} (2013), 250--314.

\bibitem[Chu12]{Church}
Thomas Church, \emph{Homological stability for configuration spaces of manifolds}, Inventiones mathematicae \textbf{188} (2012), no.~2, 465--504.

\bibitem[Cli37]{CliffordTheory}
Alfred~H Clifford, \emph{Representations induced in an invariant subgroup}, Proceedings of the National Academy of Sciences \textbf{23} (1937), no.~2, 89--90.

\bibitem[Far14]{Farb-Survey}
Benson Farb, \emph{Representation stability}, arXiv preprint arXiv:1404.4065 (2014).

\bibitem[FH91]{FultonHarris}
William Fulton and Joe Harris, \emph{Representation theory}, Graduate Texts in Mathematics, vol. 129, Springer-Verlag, New York, 1991.

\bibitem[Gad17a]{Gadish-FItype}
Nir Gadish, \emph{Categories of {$\mathrm{FI}$} type: a unified approach to generalizing representation stability and character polynomials}, Journal of Algebra \textbf{480} (2017), 450--486.

\bibitem[Gad17b]{Gadish-SubspaceArrangements}
\bysame, \emph{Representation stability for families of linear subspace arrangements}, Advances in Mathematics \textbf{322} (2017), 341--377.

\bibitem[GL15]{GL}
Wee~Liang Gan and Liping Li, \emph{Coinduction functor in representation stability theory}, Journal of the London Mathematical Society \textbf{92} (2015), no.~3, 689--711.

\bibitem[GL19]{GL-Linear}
\bysame, \emph{Linear stable range for homology of congruence subgroups via {$\mathrm{FI}$}-modules}, Selecta Math. (N.S.) \textbf{25} (2019), no.~4, Paper No. 55, 11.

\bibitem[Hat02]{Hatcher}
Allen Hatcher, \emph{Algebraic topology}, Cambridge University Press, 2002.

\bibitem[Her14]{Herberz}
Sarah Herberz, \emph{Vertikale konfigurationsraume-homotopie-gruppen fur partitionen der lange zwei}, Bachelor's thesis, Rheinische Friedrich-Wilhelms-Universitat Bonn, 2014.

\bibitem[HR17]{HershReiner}
Patricia Hersh and Victor Reiner, \emph{Representation stability for cohomology of configuration spaces in {$\mathbb{R}^d$}}, Int. Math. Res. Not. IMRN (2017), no.~5, 1433--1486, With an appendix written jointly with Steven Sam.

\bibitem[IJS09]{IJS-Wreath}
Frank Ingram, Naihuan Jing, and Ernie Stitzinger, \emph{Wreath product symmetric functions}, Int. J. Algebra \textbf{3} (2009), no.~1--4, 1--19.

\bibitem[JK81]{JamesKerber}
Gordon James and Adalbert Kerber, \emph{The representation theory of the symmetric group}, Encyclopedia of Mathematics and its Applications, vol.~16, Addison-Wesley Publishing Co., Reading, MA, 1981.

\bibitem[JRW22]{JRW-Survey}
Rita Jim\'enez~Rolland and Jennifer C.~H. Wilson, \emph{Stability properties of moduli spaces}, Notices Amer. Math. Soc. \textbf{69} (2022), no.~4, 522--533.

\bibitem[KM18]{KupersMiller}
Alexander Kupers and Jeremy Miller, \emph{Representation stability for homotopy groups of configuration spaces}, J. Reine Angew. Math. \textbf{2018} (2018), no.~737, 217--253.

\bibitem[Kra24]{KranholdThesis}
Florian Kranhold, \emph{Configuration spaces of clusters as $ {E_d} $-algebras}, Homotopy, Homology and Applications \textbf{26} (2024), no.~1, 319–339.

\bibitem[Lat17]{Latifi}
Genta Latifi, \emph{Vertical configuration spaces and homological stability}, Master's thesis, Rheinische Friedrich-Wilhelms-Universität Bonn, 2017.

\bibitem[McD75]{McDuff}
Dusa McDuff, \emph{Configuration spaces of positive and negative particles}, Topology \textbf{14} (1975), 91--107.

\bibitem[Pal21]{Palmer}
Martin Palmer, \emph{Homological stability for moduli spaces of disconnected submanifolds, i}, Algebraic \& Geometric Topology \textbf{21} (2021), no.~3, 1371--1444.

\bibitem[PS17]{PutmanSam-VIC}
Andrew Putman and Steven~V Sam, \emph{{Representation stability and finite linear groups}}, Duke Mathematical Journal \textbf{166} (2017), no.~13, 2521 -- 2598.

\bibitem[Put15]{PutmanCentralStability}
Andrew Putman, \emph{Stability in the homology of congruence subgroups}, Inventiones mathematicae \textbf{202} (2015), 987--1027.

\bibitem[Ram18]{Ramos}
Eric Ramos, \emph{Homological invariants of {FI}-modules and {FI}$_{G}$-modules}, Journal of Algebra \textbf{502} (2018), 163--195.

\bibitem[Ros14]{Rosner}
Christian Martin~Wilhelm Rosner, \emph{Vertikale konfigurationsraume-fadell-neuwirth fasserung und eilenberg-maclane}, Bachelor's thesis, Rheinische Friedrich-Wilhelms-Universitat Bonn, 2014.

\bibitem[Sam17]{Sam-notes}
Steven~V Sam, \emph{Notes for {M}ath 847: Representation {S}tability}.

\bibitem[Sam20]{Sam-Survey}
\bysame, \emph{Structures in representation stability}, Notices Amer. Math. Soc. \textbf{67} (2020), no.~1, 38--44.

\bibitem[Seg79]{Segal}
Graeme Segal, \emph{The topology of spaces of rational functions}, Acta Math. \textbf{143} (1979), no.~1-2, 39--72.

\bibitem[Sno13]{Snowden-Segre}
Andrew Snowden, \emph{Syzygies of {S}egre embeddings and {$\Delta$}-modules}, Duke Math. J. \textbf{162} (2013), no.~2, 225--277.

\bibitem[Sno19]{Snowden-MSRI}
\bysame, \emph{Algebraic structures in representation stability}, 2019.

\bibitem[SS12]{SS-IntroTCA}
Steven~V Sam and Andrew Snowden, \emph{Introduction to twisted commutative algebras}, arXiv preprint arXiv:1209.5122 (2012).

\bibitem[SS15]{SS-StabilityPatterns}
\bysame, \emph{Stability patterns in representation theory}, Forum of Mathematics, Sigma, vol.~3, Cambridge University Press, 2015, p.~e11.

\bibitem[SS17]{SS-Grobner}
Steven~V. Sam and Andrew Snowden, \emph{Gr\"obner methods for representations of combinatorial categories}, J. Amer. Math. Soc. \textbf{30} (2017), no.~1, 159--203.

\bibitem[SS19]{SamSnowden}
\bysame, \emph{Representations of categories of {$G$}-maps}, J. Reine Angew. Math. \textbf{750} (2019), 197--226.

\bibitem[Ste17]{littlewood-richardson}
Itamar Stein, \emph{The {L}ittlewood--{R}ichardson rule for wreath products with symmetric groups and the quiver of the category ${F \wr \mathrm{FI}_n}$}, Communications in Algebra \textbf{45} (2017), no.~5, 2105--2126.

\bibitem[Wil12]{Wilson-PureStringMotion}
Jennifer Wilson, \emph{Representation stability for the cohomology of the pure string motion groups}, Algebraic \& Geometric Topology \textbf{12} (2012), no.~2, 909--931.

\bibitem[Wil14]{Wilson-WeylI}
Jennifer~CH Wilson, \emph{{$\mathrm{FI}_W$}-modules and stability criteria for representations of classical {W}eyl groups}, Journal of Algebra \textbf{420} (2014), 269--332.

\bibitem[Wil15]{Wilson-WeylII}
\bysame, \emph{{$\mathrm{FI}_W$}-modules and constraints on classical {W}eyl group characters}, Math. Z. \textbf{281} (2015), no.~1, 1--42.

\bibitem[Wil18]{JennyNotes}
Jennifer C.~H. Wilson, \emph{An introduction to {$\mathrm{FI}$}-modules and their generalizations}, \url{http://www.math.lsa.umich.edu/~jchw/FILectures.pdf}, 2018, Summer School Lecture Notes.

\end{thebibliography}
\quad \\ 

\end{document}